\long\def\symbolfootnote[#1]#2{\begingroup%
\def\thefootnote{\fnsymbol{footnote}}\footnote[#1]{#2}\endgroup}
\newcommand{\Om}{\Omega}
\newcommand{\R}{\mathbb{R}}
\newcommand{\Hei}{{\mathbb H}_{1}}
\newcommand{\HW}{{HW}^{1,p}}
\newcommand{\HWloc}{{HW}^{1,p}_{loc}}
\newcommand{\HWzero}{{HW}^{1,p}_{0}}
\newcommand{\modq}{{\rm Mod}_Q}
\newcommand{\modp}{{\rm Mod}_p}
\newcommand{\Mod}{{\rm Mod}}
\newcommand{\bd}{\partial}
\newcommand{\dist}{{\rm dist}}
\newcommand{\diam}{{\rm diam}}
\newcommand{\bdyP}{\partial_{\rm P}}
\newcommand{\diamH}{\diam_{\,\Hei}}
\newcommand{\distH}{\dist_{\,\Hei}}
\newcommand{\dH}{d_{\,\Hei}}
\newcommand{\ga}{\gamma}
\newcommand{\fv}{f^{-1}}
\definecolor{blau}{rgb}{0.1,0.0,0.9}
\definecolor{funk}{rgb}{0.1,0.4,0.9}
\newcounter{komcounter}
\numberwithin{komcounter}{section}
\newcommand{\dbd}[2]{\frac{\partial#1}{\partial #2}}
\def\XXint#1#2#3{{\setbox0=\hbox{$#1{#2#3}{\int}$}
     \vcenter{\hbox{$#2#3$}}\kern-.5\wd0}}
\theoremstyle{plain}
\newtheorem{theorem}{Theorem}[section]
\newtheorem{thm}[theorem]{Theorem}
\newtheorem{lem}{Lemma}[section]
\newtheorem{cor}{Corollary}[section]
\newtheorem{ex}{Example}
\newtheorem{observ}{Observation}[section]
\theoremstyle{definition}
\newtheorem{defn}{Definition}[section]
\newtheorem{rem}{\textnormal{\textbf{Remark}}}
\begin{document}

\title {Prime ends in the Heisenberg group $\Hei$ and the boundary behavior of quasiconformal mappings}

\author{
Tomasz Adamowicz{\small{$^1$}}
\\
\it\small Institute of Mathematics, Polish Academy of Sciences \\
\it\small ul. \'Sniadeckich 8, 00-656 Warsaw, Poland\/{\rm ;}
\it\small T.Adamowicz@impan.pl
\\
\\
Ben Warhurst{\small{$^1$}}
\\
\it\small Institute of Mathematics,
\it\small University of Warsaw,\\
\it\small ul.Banacha 2, 02-097 Warsaw, Poland\/{\rm ;}
\it\small B.Warhurst@mimuw.edu.pl
}

\date{}
\maketitle

\footnotetext[1]{T. Adamowicz and B. Warhurst were supported by a grant Iuventus Plus of the Ministry of Science and Higher Education of the Republic of Poland, Nr 0009/IP3/2015/73.}

\begin{abstract}
We investigate prime ends in the Heisenberg group $\Hei$ extending
N\"akki's construction for collared domains in Euclidean spaces. The
corresponding class of domains is defined via uniform domains and
the Loewner property. Using prime ends we show the counterpart of
Caratheodory's extension theorem for quasiconformal mappings, the
Koebe theorem on arcwise limits, the Lindel\"of theorem for principal points and the Tsuji theorem.
\newline
\newline \emph{Keywords}: capacity, Carnot group, collared, extension, finitely connected at the boundary, Heisenberg group, Koebe,
Lie algebra, Lie group, Lindel\"of, $p$-modulus, prime end,
quasiconformal, sub-Riemannian, Tsuji.
\newline
\newline
\emph{Mathematics Subject Classification (2010):} Primary: 30D40; Secondary: 30L10, 30C65.
\end{abstract}

\section{Introduction}

The corner stone for the theory of prime ends is a work by
Carath\'eodory~\cite{car1}, who first defined prime ends for
simply-connected domains in the plane. The main motivation for his
studies came from the problem of continuous and homeomorphic
extensions of conformal mappings. A result due to Carth\'eodory (and
Osgood--Taylor) allows for the homeomorphic extension of conformal mappings
between Jordan domains in the plane. However, there are simple
examples, for instance a slit-disk, when this extension theorem
fails. Nevertheless, by introducing the so-called prime ends
boundary, Carath\'eodory was able to show that \emph{a conformal
homeomorphism between bounded simply-connected planar domains $U$
and $V$ extends to a homeomorphism between $U$ and the prime ends
compactification of $V$}. The subsequent development of the prime
ends theory has led to generalizations of prime ends for more
general domains in the plane and in higher dimensional Euclidean
spaces, to mention Kaufman~\cite{Kau}, Mazurkiewicz~\cite{Maz},
Freudenthal~\cite{Fre} and more recently Epstein~\cite{Ep} and
Karmazin~\cite{Ka}, see also~\cite{abbs} for a theory of prime ends
in metric spaces. Applications of prime ends encompass: the theory
of continua, see Carmona--Pommerenke~\cite{cp1, cp2}, the boundary
behavior of solutions to elliptic PDEs, see Ancona~\cite{An} and the
studies of the Dirichlet problem for $p$-harmonic functions in
metric spaces, see Bj\"orn--Bj\"orn--Shanmugalingam~\cite{bbsh}.

In this work we follow the original motivation for studying prime
ends and investigate extension problems and the related boundary
behavior for quasiconformal mappings in the setting of the Heisenberg
group $\Hei$. Similar results of this type were obtained by
V\"ais\"al\"a~\cite[chapter 17]{va1}, \cite{va2} and N\"akki~\cite{na} in the Euclidean setting. The latter one introduced prime ends based on the notion of the $n$-modulus of curve families in $\R^n$. One of our goals is to
generalize N\"akki's results to the sub-Riemannian setting. If one seeks
to explore these ideas in other geometric settings then the
Heisenberg group $\Hei$ together with the sub-Riemannian geometry
is a natural candidate. The reason being that $\Hei$ has a large
enough family of quasiconformal mappings to make it an interesting
pursuit, see the discussion in the end of section~\ref{prel-quasic}. It is perhaps surprising that such a generalization is not straightforward and requires some new approaches. First we recall some basic definitions for the Heisenberg including rectifiable curves, contact and quasiconformal mappings, which we define also in terms of the modulus of curve families (the rudimentary properties of modulus in $\Hei$ are recalled and proved in the Appendix).

In Section~\ref{sec-Nakkis-pe} we introduce prime ends which we define following the approach in \cite{na}. Upon introducing a topology on the prime ends boundary, Definition~\ref{defn-topol}, we show our first extension result, allowing us to extend a quasiconformal mapping to a homeomorphism between the prime ends boundaries, Theorem~\ref{thm-hom-1}. One of the most important definitions of our work are given in Sections~\ref{sect-loew-unif} and \ref{sect-collared}. There we recall the Loewner spaces, uniform domains in $\Hei$ and observe in Lemma~\ref{sep-cond} that in uniform domains our modulus-based definition of prime ends has an equivalent form in terms of the Heisenberg distance. This result is the key-part of our Definition~\ref{def-right-collared} of the so-called collared domains. The original definition introduced by V\"ais\"al\"a and N\"akki cannot be applied directly in our setting due to the rigidity of the conformal mappings in $\Hei$ and the lack of the domains satisfying the Loewner condition (problems which do not arise in the Euclidean setting). Furthermore, in Section~\ref{sect-collared} we relate collaredness with another important class of domains finitely connected at the boundary and prove that \emph{a quasiconformal map from a collared domain $\Om$ has a homeomorphic extension to a map between a topological closure $\overline{\Om}$ and the prime ends closure of the target domain}, see Theorem~\ref{thm-key-res}. This result naturally corresponds to Theorem 4.1 in \cite{na} and Section 3.1 in \cite{va2}.

The goal of Section~\ref{sect4} is to present yet another perspective on prime ends and note that in the domains finitely (in particular, locally) connected at the boundary, one can construct singleton prime ends associated with every boundary point. We also relate our prime ends to those studied in \cite{abbs} in metric spaces.

The important results of this paper are presented in
section~\ref{sec-boundary}, where we study the boundary behavior of
quasiconformal mappings. We first recall notions of accessibility
and observe that one can assign to an accessible boundary point the
singleton prime end, see Observation~\ref{obs-acc-pe}. All together,
in the presentation below we propose three methods to obtain
canonical prime ends in $\Hei$: by employing collardness
(Observation~\ref{obs-coll-spe}), via the finite connectedness at
the boundary (Lemma~\ref{obs-finite-pe}) and in
Observation~\ref{obs-acc-pe}. We show the Koebe theorem providing
conditions which imply that a quasiconformal mapping has arcwise
limits along all end-cuts in domains finitely connected at the
boundary (Theorem~\ref{thm-Koebe}). This result corresponds to the
classical observation for conformal mappings and generalizes similar
result in $\R^n$ due to N\"akki~\cite[Theorem 7.2]{na}. Then we
prove a version of the Lindel\"of theorem relating the principal
points of prime ends to cluster set of mappings along end-cuts
(Theorem~\ref{thm-subs}). The proof of this result requires
developing some new observations and illustrates differences between
the Euclidean and the Heisenberg settings. The corresponding results
in $\R^n$ are due to Gehring~\cite[Theorem 6]{gehr1},
N\"akki~\cite[Theorem 7.4]{na} and Vuorinen~\cite[Section 3]{vuo}.
Finally, in Theorem~\ref{tsuji-thm} we show a variant of the Tsuji
theorem on the Sobolev capacities of sets of arcwise limits. In the
proof we face once again the lack of some techniques available in
$\R^n$, namely the modulus symmetry property.

\section{Preliminaries}

 In this section we recall basic definitions and properties of the Heisenberg groups, including brief discussion on curves and their lengths, the Heisenberg and the sub-Riemannian metrics. Moreover, we recall notions of the horizontal Sobolev spaces and quasiregular and quasiconformal mappings in $\Hei$. Further discussion, including the definition and properties of the modulus of curve families, is presented in the Appendix.

\subsection{The Heisenberg group $\Hei$}\label{sec-hei-prelim}

The Heisenberg group is often presented using coordinates $(z,t)$ where $z=x+iy \in \mathbb{C}$, $t \in \mathbb{R}$ and multiplication is defined by
\begin{align}
(z_1,t_1)(z_2,t_2)&=(z_1+z_2, t_1+t_2 + 2\, {\rm Im}\,(z_1 \bar z_2)) \nonumber\\
&=(x_1 + x_2 , y_1 + y_2, t_1+t_2 + 2(x_2y_1-x_1y_2)). \label{hz}
\end{align}
Note that $(z,t)^{-1}=(-z,-t)$.

In particular a natural basis for the left invariant vector fields is given by the following vector fields
\begin{align}
\tilde X = \dbd{}{x}+2y\dbd{}{t}, \quad \tilde Y =\dbd{}{y}-2x\dbd{}{t} \quad {\rm and } \quad  \tilde T= \dbd{}{t},
\end{align} where $[\tilde X , \tilde Y ]=-4 \tilde T$. The horizontal bundle is given pointwise by $\mathcal{H}_p={\rm span}\, \{ \tilde X(p), \tilde Y(p)\}$ and a curve is horizontal if for almost all $t_0 \in I$, $\gamma'(t_0)$ exists and belongs to $  \mathcal{H}_{\gamma(t_0)}$.

The pseudonorm given by
\begin{align}
||(z,t)|| =  ( |z|^4 + t^2)^{1/4} \label{HN1}
\end{align}
gives rise to a left invariant distance defined by $d_{\Hei}(p,q)= ||p^{-1}q||$ which we call the Heisenberg distance. More explicitly we have
\begin{align*}
d_{\Hei} ( (z_1,t_1), (z_2,t_2)) & = ||(-z_1,-t_1)(z_2,t_2)|| \\
 & =||(z_2-z_1, t_2-t_1 -2 {\rm Im} (z_1 \bar z_2))|| \\
& =(   |z_2-z_1|^4 + (t_2-t_1 -2 {\rm Im} (z_1 \bar z_2))^2 )^{1/4}.
\end{align*}

A dilation by $r \in \mathbb{R}$ is defined by $\delta_r (z,t)  = ( rz, r^2 t)$, indeed  $d_{\Hei}(\delta_r (p),\delta_r (q))=|r| d_{\Hei}(p,q)$. The left invariant Haar measure $\lambda$ is simply the $3$-dimensional Lebesgue measure on $\Hei$ and $\delta_r^*d \lambda=r^4 d \lambda$. It follows that the Hausdorff dimension of the metric measure space is $(\Hei, d_{\Hei}, \lambda)$ is $Q=4$. An equivalent statement is that $(\Hei, d_{\Hei}, \lambda)$ is $Q=4$ Ahlfors--regular which is to say that there exists a real constant $c$ such that for all balls $B(r,p)$ we have
\begin{align}
\frac{1}{c}r^Q  \leq \mathcal{H}^Q(B(r,p)) \leq c r^Q, \label{alfR}
\end{align}  where $\mathcal{H}^Q$ denotes  $Q$-dimensional Hausdorff measure induced by $d_{\Hei}$.

\begin{defn}
A $Q$-regular metric measure space will be a triple $(X,d,\mu)$ where the Hausdorff dimension of $(X,d)$ is $Q$ and $\mu$ is a constant multiple of the $Q$-dimensional Hausdorff measure induced by $d$.
\end{defn}

Examples are when $X$ is a Carnot group with sub-Riemannian distance $d_s$ and Haar measure. Indeed, the Haar measure is a multiple of Lebesgue measure which is a  multiple of the $Q$-dimensional Hausdorff measure induced by $d_s$. We can replace $d_s$ with any equivalent metric. On $\Hei$  in particular the measure $\mathcal{H}^Q$ is a constant multiple of $3$-dimensional Lebesgue measure, an inequality similar to \eqref{alfR} is valid with $\mathcal{H}^Q$ replaced by $\lambda$.

\subsection{Rectifiable curves}

 A curve $\gamma$ in $\Hei$ is a continuous map $\gamma:I \to \Hei$ where $I$ is an open or closed interval. If $I=[a,b]$ then the Heisenberg length of $\gamma$ is given by $$ l(\gamma)=\sup \sum_{i=1}^n d_{\Hei}(\gamma(t_i),\gamma(t_{i+1})),$$
where the supremum is over all finite sequences $a=t_1 \leq t_2 \leq \dots \leq t_n \leq t_{n+1}=b$. If $I$ is not closed then
 $$ l(\gamma)=\sup l( \gamma|_J )$$ where the supremum is over all closed subintervals $J \subset I$. If   $l(\gamma)<\infty$ we say that $\gamma$ is rectifiable.

A curve  $\gamma:I \to \Hei$ is locally rectifiable if each subcurve $\gamma|_{[\alpha,\beta] }$ is rectifiable for all closed intervals $[\alpha,\beta] \subseteq I$. For example the curve $\gamma:(-1,1) \to \Hei$ defined by $\gamma(t)=(t+iy(t),0)$ where
$$
y(t)=\begin{cases} t\sin(1/t) &\mbox{if } t \ne 0  \\
0  & \mbox{if } t=0, \end{cases}
$$
is not locally rectifiable since any subcurve $\gamma(t)|_{[\alpha, \beta]}$ such that $0 \in [\alpha,\beta]$ is not rectifiable. Conversely the curve  $\gamma:(0,1) \to \Hei$ defined by $\gamma(t)=(t+iy(t), 0)$ is locally rectifiable but not rectifiable, see Chapter 3 in~\cite{va1}.

The following theorem is proved in exactly the same way as Theorem 3.2 in \cite{va1} with the  Euclidean metric replaced by the Heisenberg distance and so we omit the proof.

\begin{thm}\label{curve-ext} If $\gamma:(a,b) \to \Hei$ is rectifiable then it has a unique extension $\gamma^*:[a,b]  \to \Hei$ such that $l(\gamma^*)=l(\gamma)$.
\end{thm}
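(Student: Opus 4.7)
The plan is to mimic the Euclidean argument of \cite[Theorem 3.2]{va1}, the only geometric input being the left-invariant distance $d_{\Hei}$ and the fact that $(\Hei,d_{\Hei})$ is a complete metric space (homeomorphic to $\mathbb{R}^3$ with a complete metric). Fix a base point $c\in(a,b)$ and define the length function $\ell(t)=l(\gamma|_{[c,t]})$ for $t\in[c,b)$ and $\ell(t)=-l(\gamma|_{[t,c]})$ for $t\in(a,c]$. Since $\gamma$ is rectifiable, $\ell$ is bounded and monotone nondecreasing, so the one-sided limits $\ell(a^+)$ and $\ell(b^-)$ exist and are finite. For any $s,t$ with $c\le s<t<b$, the definition of length gives the key inequality
\begin{equation*}
d_{\Hei}(\gamma(s),\gamma(t))\;\le\;l(\gamma|_{[s,t]})\;=\;\ell(t)-\ell(s),
\end{equation*}
and analogously near $a$. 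Hence whenever $s_n,t_n\to b^-$, the points $\gamma(s_n),\gamma(t_n)$ form a Cauchy sequence.

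By completeness of $(\Hei,d_{\Hei})$, the limits
\begin{equation*}
p_a:=\lim_{t\to a^+}\gamma(t),\qquad p_b:=\lim_{t\to b^-}\gamma(t)
\end{equation*}
exist in $\Hei$, and I set $\gamma^*(a)=p_a$, $\gamma^*(b)=p_b$, $\gamma^*=\gamma$ on $(a,b)$. Continuity of $\gamma^*$ at the endpoints is immediate from the definition of the limits, and uniqueness of such an extension is forced by continuity together with the fact that $(a,b)$ is dense in $[a,b]$.

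It remains to check $l(\gamma^*)=l(\gamma)$. The inequality $l(\gamma^*)\ge l(\gamma)$ is trivial since $\gamma$ is the restriction of $\gamma^*$ and every partition of a closed subinterval of $(a,b)$ can be enlarged to a partition of $[a,b]$ by adjoining the endpoints, which only increases the sum. For the reverse inequality, given any partition $a=t_0<t_1<\cdots<t_n=b$ of $[a,b]$ and any $\varepsilon>0$, I choose $t_0'\in(a,t_1)$ and $t_n'\in(t_{n-1},b)$ so close to $a$ and $b$ that continuity of $\gamma^*$ at the endpoints yields
\begin{equation*}
d_{\Hei}(\gamma^*(a),\gamma^*(t_0'))+d_{\Hei}(\gamma^*(t_n'),\gamma^*(b))<\varepsilon.
\end{equation*}
Replacing $t_0$ by $t_0'$ and $t_n$ by $t_n'$ and using the triangle inequality, the original partition sum is bounded by $l(\gamma)+\varepsilon$ plus the two boundary terms above. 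Taking the supremum over partitions and letting $\varepsilon\to 0$ gives $l(\gamma^*)\le l(\gamma)$.

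The only point where one has to be slightly careful is verifying that the above Cauchy argument works simultaneously at both endpoints and that the extended map genuinely lands in $\Hei$; this is precisely where completeness of $(\Hei,d_{\Hei})$ is invoked, and it is the sole ingredient that distinguishes the Heisenberg case from the Euclidean one treated in \cite{va1}. Since the argument uses only the triangle inequality, completeness, and the definition of $l$, no essentially new techniques are required, which is why the authors choose to omit the proof.
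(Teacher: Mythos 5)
Your proof is correct and is exactly the argument the paper has in mind: the authors omit the proof precisely because it is Väisälä's Theorem 3.2 with the Euclidean metric replaced by $d_{\Hei}$, i.e.\ the Cauchy-sequence/completeness argument at the endpoints followed by the two length inequalities, just as you write. The only ingredient you use beyond the triangle inequality and the definition of $l$ is completeness of $(\Hei,d_{\Hei})$, which holds since the Korányi metric induces the Euclidean topology on $\R^3$ and its bounded sets are Euclidean-bounded, so your argument goes through verbatim.
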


For each rectifiable curve $\ga$ of a closed interval there is a unique arc length parametrization of $\gamma$ arising from the arc length function  $S_\gamma:[a,b] \to [0, l(\gamma)]$ given by $S_\gamma(t)=l(\gamma|_{[a,t]})$. In particular there is a unique $1$-Lipschitz map $\bar \gamma: [0, l(\gamma)] \to \Hei$ called the arc length parametrisation such that $\gamma(t)=\bar \gamma \circ S_\gamma(t)$. The arc length parametrisation facilitates the definition of the line integral of a nonnegative Borel function $\varrho:\Hei \to [0,\infty]$ as follows:
\begin{align}
 \int_{\gamma} \varrho dl:=\int_{0}^{l(\gamma)} \varrho \circ \bar \gamma (s)\, ds. \label{LineInt}
\end{align}
If $I$ is open, then we set
\[
 \int_{\gamma} \varrho dl:= \sup_{\gamma'} \int_{\gamma'} \varrho dl.
\]
where the supremum is over all closed subcurves $\gamma'$.

By Pansu \cite{pansu}, Lipschitz mappings are Pansu differentiable almost everywhere. For  locally rectifiable curves this means that $\lim_{t \to 0} \delta_{1/s} \circ \tau_{\bar \gamma(s_0)}^{-1} \circ \bar \gamma(s_0+s)$ exists for almost all $s_0$ which implies that $\bar \gamma'(s_0)$ exists and is horizontal, moreover the same is true for  $\gamma'(s_0)$. If $\gamma :[a,b] \to \mathbb{H}$  is a horizontal curve, then the sub-Riemannian length of $\gamma$ is given by the integral
$$
 l_S(\gamma)= \int_a^b \sqrt{ \dot x (s)^2 + \dot y(s)^2} \, ds
$$
and $l(\gamma)=l_S(\gamma)$ (see Koran\'yi~\cite{kor85}).  Moreover the change variable $s=S_\gamma(t)$ in \eqref{LineInt} shows that
$$\int_\gamma \varrho  d l =  \int_a^b \varrho(\gamma(s)) |\gamma '(s)|   ds \quad $$ where $| \gamma' (s)| =\sqrt{ \dot x (s)^2 + \dot y(s)^2}$.

The sub-Riemannian distance $d_S(p,q)$ is defined as the infimum of sub-Riemannian lengths of all horizontal curves joining $p$ and $q$. The Heisenberg metric and the sub-Riemannian metric are equivalent, to be precise
$$ \frac{1}{\sqrt{\pi}} d_S(p, q) \leq d_{\Hei}(p,q)  \leq d_S(p, q), $$
see Bella\"iche~\cite{bel}.

\subsection{Horizontal Sobolev space on $\Hei$, contact, quasiregular and quasiconformal mappings on $\Hei$}\label{prel-quasic}

Below we recall some basic definitions in the theory of the Sobolev spaces in $\Hei$ and contact mappings, and apply them to define the main classes of mappings we study in the paper, namely quasiregular and quasiconformal mappings.

\begin{defn}\label{horiz-Sob}
 Let $U\subset \Hei$ be an open subset of $\Hei$. For $1<p<\infty$, we say that a function $u:U\to \R$ belongs to the \emph{horizontal Sobolev space} $\HW(U)$ if $u\in L^p(U)$ and the horizontal derivatives $\tilde X u$ and $\tilde Y u$ exist in the distributional sense and represented by elements of $L^p(U)$. The space $\HW(U)$ is a Banach space with respect to the norm
\[
 \|u\|_{\HW(U)}\,=\,\|u\|_{L^p(U)}+\|(\tilde Xu, \tilde Yu)\|_{L^p(U)}.
\]

In the similar way we define the local spaces $\HWloc (U)$. We define space $\HWzero(U)$ as a closure of
$C_{0}^{\infty}(U)$ in $\HW(U)$.
\end{defn}

The horizontal gradient  $\nabla_0 u$ of $u \in \HWloc (U)$ is given
by the equation
 $$\nabla_0 u =  (\tilde X u)  \tilde X  + (\tilde Y u)  \tilde Y.$$

A contact form on $\Hei$ is given by $ \omega= dt + 2(x dy - y dx )$ , in particular $\omega \wedge d\omega$ is a volume form and $\mathcal{H}_p={\rm ker}\, \omega_p$.
\begin{defn}
 Let $\Om,\Om'\subset \Hei$ be domains in $\Hei$. We say that a diffeomorphism $f:\Om\to\Om'$ is a \emph{contact transformation} if it preserves the contact structure, i.e.
\begin{equation}
 f^{*}\omega = \lambda \omega,
\end{equation}
where $\lambda:\Om \to \R$ and $\lambda\not= 0$ in $\Om$.
\end{defn}
Note that the definition implies $f_*$ preserves the horizontal bundle, moreover we we can weaken the regularity assumption to $HW_{loc}^{1,Q}(\Omega, \Omega')$ and simply require statements to hold $\lambda$-a.e. The contact maps that will be of relevance in our work here will be those which are quasiconformal.

Let $f:\Omega \to \Omega'$ be a homeomorphism where $\Omega$ and $\Omega'$ are domains in $\Hei$, and let the distortion function of $f$ be given by $$H_f(p,r)=\frac{\sup\{d(f(p),f(q))\, |\, d(p,q) \leq r \} }{ \inf \{d(f(p),f(q))\, |\, d(p,q) \geq r \}}.$$
\begin{defn}\label{qc-def-H}
Let $\Om, \Om'\subset \Hei$ be domains in $\Hei$. A homeomorphism $f:\Om\to\Om'$ is \emph{$K$-quasiconformal} if
$$\limsup_{r \to 0} H_f(p,r) \leq K $$ for all $p \in \Omega$.
\end{defn}

Quasiconformal maps are Pansu differentiable, see \cite{pansu}, which implies that they are  $HW_{loc}^{1,Q}(\Omega, \Omega')$ regular contact maps.  We recall that  the Pansu differential $Df(p)$ is the automorphism of $\Hei$ defined as
$$
Df(p)\,q := \lim_{t\rightarrow 0} \delta_t^{-1}\circ \tau_{f(p)}^{-1}\circ f \circ \tau_p\circ \delta_t(q),
$$
where $p,q\in \Hei$. It follows that quasiconformality can be expressed analytically by the inequality
$$ ||Hf_*||_\infty^4 \leq K \det f_*$$
where
$$ ||Hf_*||_\infty=\max \{ | f_*(V) | \, : \, V \in \mathcal{H}_p, \,  |V|=\sqrt{dx(V)^2+dy(V)^2}=1 \}.$$

A fundamental property of quasiconformal mappings is the fact that they are absolutely continuous on almost all locally rectifiable curves in the sense that the family $\Gamma_f$ consisting of rectifiable curves whose image under a quasiconformal map $f$ is not rectifiable satisfies $\Mod_4(\Gamma_f)=0$ (see Appendix for definitions and some properties of the modulus of curve families). See Theorem 9.8 in Heinonen--Koskela--Shanmugalingma--Tyson~\cite{hkstB} for a proof in the setting of spaces with locally bounded geometry. Stated in terms specific to the Heisenberg setting we have:

\begin{thm}\label{qc-def-mod}{\cite[Thm 18]{BalFassPlat}}
If $f: \Omega \to \Omega'$ is quasiconformal map between two domains in $\Hei$  and $\Gamma$ is a curve family in $\Omega$, then
\begin{equation}\label{qc-mod}
 \frac{1}{K^2}\Mod_4(\Gamma)\leq  \Mod_4(f\Gamma ) \leq K^2 \Mod_4(\Gamma).
\end{equation}
\end{thm}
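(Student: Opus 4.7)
The plan is to construct admissible densities for $f\Gamma$ out of admissible densities for $\Gamma$ (and vice versa) via push-forward by the Pansu differential, and to bound the resulting $L^{4}$ norms through a change of variables combined with the pointwise analytic distortion inequality recalled in Section~\ref{prel-quasic}.

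For the upper bound $\Mod_4(f\Gamma)\leq K^{2}\Mod_4(\Gamma)$, I would fix a density $\sigma$ admissible for $\Gamma$, extended by zero outside $\Om$, and define on $\Om'$
\[
\rho(y)\;:=\;\sigma(f^{-1}(y))\,\|H\,Df^{-1}(y)\|_\infty,
\]
where $\|H\,\cdot\,\|_\infty$ denotes the operator norm on the horizontal layer of the Pansu differential. For any locally rectifiable curve $\alpha\in f\Gamma$ along which $f^{-1}$ is absolutely continuous and Pansu differentiable a.e., the chain rule yields $|\beta'(s)|\leq \|H\,Df^{-1}(\alpha(s))\|_\infty\,|\alpha'(s)|$ at almost every arc-length parameter $s$, where $\beta:=f^{-1}\circ\alpha$; inserting this into \eqref{LineInt} gives $\int_\alpha\rho\,dl\geq \int_\beta\sigma\,dl\geq 1$. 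The subfamily of $f\Gamma$ on which this pointwise estimate fails is of the type $\Gamma_{f^{-1}}$ discussed in the paragraph preceding the theorem and therefore has $4$-modulus zero, so $\rho$ is admissible for $f\Gamma$ up to a negligible family—enough to control $\Mod_4(f\Gamma)$.

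To estimate $\int_{\Om'}\rho^{4}\,d\lambda$, I would invoke Pansu differentiability of $f$ almost everywhere together with the area formula on $\Hei$ to perform the change of variables $y=f(x)$, $d\lambda(y)=\det f_*(x)\,d\lambda(x)$, obtaining
\[
\int_{\Om'}\rho^{4}\,d\lambda \;=\; \int_\Om \sigma(x)^{4}\,\|H\,Df(x)^{-1}\|_\infty^{4}\det f_*(x)\,d\lambda(x).
\]
Writing $\lambda_1(x)\geq\lambda_2(x)>0$ for the singular values of the horizontal part of $Df(x)$, one has $\|Hf_*\|_\infty^{4}=\lambda_1^{4}$, $\det f_*=(\lambda_1\lambda_2)^{2}$, and hence $\|H\,Df^{-1}\|_\infty^{4}\det f_*=\lambda_1^{2}/\lambda_2^{2}$. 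The analytic form of quasiconformality recalled above, together with Definition~\ref{qc-def-H}, forces $\lambda_1/\lambda_2\leq K$ almost everywhere (outer and inner dilatations in $\Hei$ are both controlled by the square of the metric distortion because the horizontal layer has dimension two), so the last integrand is bounded by $K^{2}\sigma^{4}$. Passing to the infimum over admissible $\sigma$ produces the desired upper bound.

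The reverse inequality $\Mod_4(\Gamma)\leq K^{2}\Mod_4(f\Gamma)$ is obtained by repeating the argument with $f$ replaced by $f^{-1}\colon\Om'\to\Om$, which is again $K$-quasiconformal by Definition~\ref{qc-def-H}. The main obstacle throughout, and the point that requires the most care, is the $\Mod_4$-negligibility of the exceptional curves on which either $f$ or $f^{-1}$ fails to be absolutely continuous: without it the pointwise Pansu chain-rule estimates cannot be promoted to modulus estimates. This is precisely the $HW^{1,Q}_{loc}$-ACL property of quasiconformal maps on $\Hei$ already recalled before the theorem; the remaining ingredients—the Pansu chain rule, the area formula on $\Hei$, and the two-sided singular-value bounds—are standard.
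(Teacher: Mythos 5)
The paper does not actually prove this statement: it is imported verbatim as Theorem 18 of Balogh--F\"assler--Platis \cite{BalFassPlat}, with the preceding paragraph only recalling the absolute-continuity-on-almost-every-curve property that makes such a result possible. Your sketch is, in outline, the standard proof of quasi-invariance of the conformal modulus and is essentially the argument carried out in the cited source: transport an admissible density through the Pansu differential, verify admissibility off an exceptional family of zero $4$-modulus, and change variables using the area formula. Your singular-value bookkeeping is the right way to see where the exponent $2$ in $K^2$ comes from: with horizontal singular values $\lambda_1\ge\lambda_2$ one has $\|Hf_*\|_\infty^4=\lambda_1^4$ and $\det f_*=(\lambda_1\lambda_2)^2$, so the distortion of the integrand is $(\lambda_1/\lambda_2)^2\le K^2$, and the same computation applied to $f^{-1}$ gives the other inequality. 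Two points deserve more care than your sketch gives them. First, Pansu differentiability of $f^{-1}$ holds $\lambda$-almost everywhere, but a curve is $\lambda$-null, so the pointwise chain-rule estimate $|\beta'(s)|\le\|H\,Df^{-1}(\alpha(s))\|_\infty\,|\alpha'(s)|$ along a given curve is not automatic; one needs a Fuglede-type argument showing that outside a curve family of zero $4$-modulus the map is absolutely continuous \emph{and} admits an a.e.\ upper-gradient inequality along the curve. This is exactly what \cite{BalFassPlat} (and \cite{hkstB} in the metric setting) supply, and it is the genuinely nontrivial ingredient, so citing it explicitly rather than gesturing at ``the $HW^{1,Q}_{loc}$-ACL property'' would close the gap. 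Second, the density $\rho$ must be Borel and defined everywhere (e.g.\ set $\rho=\infty$ on the null set where $Df^{-1}$ fails to exist); this is routine but should be said. With those repairs your argument is a correct, self-contained replacement for the citation.
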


In Capogna--Cowling~\cite{CapCow}, the authors prove that $1$-quasiconformal maps are $C^\infty$ and, consequently from Koran\'yi--Reimann~\cite{kr1}, the following Liouville theorem holds: a $1$-quasiconformal map of a domain $\Omega \subseteq \Hei$ is given by the action of an element in $SU(1,1)$. In particular, a $1$-quasiconformal map is always a composition of the following four basic types of $1$-qc map:
\begin{enumerate}
\item Left translation (isometry),
\item Dilation ($1$-qc),
\item Rotation: $R_\theta(z,t) = (e^{i \theta} z, t)$ (isometry),
\item Inversion in the unit sphere : $J(z,t)= \frac{-1}{|z|^4 +t^2 } ( z(|z|^2 +it) , t)=  ( \frac{z}{it -|z|^2  }, \frac{-t}{|z|^4 +t^2 })$ ($1$-qc).
\end{enumerate}

We remark that via the Cayley transformation, the inversion is understood in terms of the one point compactification of $\Hei$ being the unit sphere in $\mathbb{C}^2$, see \cite{kr1}. The inversion facilitates the definition of stereographic projection of any sphere to the complex plane. Using translations and dilation, the given sphere is mapped to the sphere with center $(0,-3/2)$ and radius $1/\sqrt{2}$ and then inverted in the unit sphere centered at $(0,-1)$, i.e.,  apply
$$
 \tau_{(0,-1)} \circ  J \circ \tau_{(0,1)}.
$$
 We note that, unlike the case of $\overline{ \mathbb{R}^n}$, we do not have the freedom to normalize, since left translations do not preserve the complex plane.

Since the $1$-qc maps are given by the action of finite dimensional Lie group we say that $\Hei$ is $1$-qc rigid. In such cases a Carath\'eodory extension theorem  for $1$-qc mappings is somewhat trivial. Similarly, if we are going to consider a non-trivial Carath\'eodory extension theorem for quasiconformal maps we at least need to avoid Carnot groups that are contact rigid, i.e., the contact maps are given by the actions of a finite dimensional Lie group, see Ottazzi--Warhurst~\cite{OW}. Following Euclidean space, the most nonrigid of all Carnot groups is $\Hei$. Indeed, the pseudo group of local contact mappings is large and so a reasonably interesting theory can be expected. In fact, in \cite{kr1} they produce an infinite dimensional family of quasiconformal maps as flows of vector fields as well as developing a Beltrami type equation. However, there is no existence theorem for this equation. On the other hand, in Balogh~\cite{Zolt}, it is shown that quasiconformal maps exist on $\Hei$ that are not bi-Lipschitz.

\section{Prime ends in the Heisenberg group $\Hei$}

 In this section we give basic definitions of the prime ends theory in the sub-Riemannian setting. First, following the modulus approach of N\"akki, we define prime ends and a topology on the prime ends boundary. Using prime ends, we show the first extension result for quasiconformal mappings, see Theorem~\ref{thm-hom-1}. The remaining part of this section is devoted to study the so-called \emph{collared domains}. N\"akki~\cite{na} and V\"ais\"al\"a~\cite{va1} defined collared domains in order to study extension properties and the prime end boundary. It turns out, that the structure of the Heisenberg group does not allow us to follow their approach. Namely, the Loewner property of collaring domains, crucial for the properties of prime ends, need not hold for natural counterparts of collaring domains in $\Hei$. Therefore, we need new definition, in particular we impose additional uniformity assumption on the collaring neighborhood. See details in section~\ref{sect-collared} and section~\ref{sect-loew-unif} for Loewner and uniform domains in the Heisenberg setting. Using collared domains we obtain Theorem~\ref{thm-key-res}, another extension result for quasiconformal mappings.

\subsection{Prime ends according to N\"akki}\label{sec-Nakkis-pe}

N\"akki in \cite{na} introduced a theory of prime ends for domains in $\R^n$ based on the notion of $n$-modulus.
We follow his idea and develop the appropriate theory in the Heisenberg setting based on the notion of $Q$-modulus where $Q=4$ is the Hausdorff dimension of $\Hei$.

\begin{defn}[cf. Section 3.1 in \cite{na}]\label{def-cross-set}
A connected subset $E$ of a domain $\Om\subset \Hei$ is called a \emph{cross-set} if:
\begin{itemize}
\item[(1)] $E$ is relatively closed in $\Om$,
\item[(2)] $\overline{E}\cap \partial \Om\not=\emptyset$,
\item[(3)] $\Om\setminus E$ consists of two components whose boundaries intersect $\bd \Om$.
\end{itemize}
\end{defn}

\begin{defn}\label{def-chain-N}
A collection $\{E_k\}_{k=1}^\infty$ of cross-sets is called a \emph{chain} if $E_k$ separates $E_{k-1}$ and $E_{k+1}$ within $\Om$ for all $k$. We denote the component of $\Om\setminus E_k$ containing $E_{k+1}$ by $D(E_k)$ and define an \emph{impression} of a chain $\{E_k\}_{k=1}^\infty$ as follows
\[
I[E_k]:=\bigcap_{k=1}^{\infty}\overline{D(E_k)}.
\]
\end{defn}
The definition immediately implies that impression of a chain is either a continuum or a point. The set of all chains is in some sense too large so the following additional conditions are imposed to cut it down.

\begin{defn}\label{def-pr-chain}
A chain is a \emph{prime chain} if:
\begin{itemize}
\item [(a)] $\Mod_{4}(E_{k+1}, E_{k}, \Om)<\infty$,
\item [(b)] For any continuum $F \subset \Om$ we have that
$$\lim_{k\to \infty} \Mod_{4}(E_k, F, \Om)=0.$$
\end{itemize}
\end{defn}

In view of Theorem \ref{qc-def-mod}, conditions (a) and (b) are  quasiconformally invariant, and under certain restrictions on $\Omega$, imply stronger separation of the cross sets as well as control over their diameter. In particular we will discuss domains $\Om$ so that (a) implies $\dist_{\Hei} (E_k,E_{k+1})>0$ and (b) implies $\diam_{\Hei}(E_k) \to 0$, cf. Lemma~\ref{sep-cond}.

It is crucial for our further work to know that the impression of a (prime) chain is a subset of the topological boundary $\partial \Om$. This follows from Lemma A.10 in~\cite{abbs} together with Part (b) of Definition~\ref{def-pr-chain}. Lemma A.10 is formulated for the so-called acceptable sets (cf. Definition~\ref{def-accset} below) but for the sake of convenience we will state it without appealing to the definition of prime ends as in \cite{abbs} and specialize to the setting of $\Hei$. This is due to the fact that $\Hei$ satisfies the main assumptions of \cite{abbs}, that is $\Hei$ is a complete metric measure space with a doubling measure, supporting a $(1,4)$-Poincar\'e inequality.

\begin{lem}[Lemma A.10 in \cite{abbs}]\label{lem-imp-bd}
Let $\{E_k\}_{k=1}^\infty$ be a sequence of open bounded connected sets $E_k\subsetneq\Omega$ such that $\overline{E_k}\cap \partial \Omega\not=\emptyset$ satisfying $\overline{E}_{k+1}\cap\Om\subset E_k$ for each $k$. If
$\lim_{k\to\infty}\Mod_4(E_k,B,\Om)=0$ for some ball $B\subset\Om\setminus E_1$, then $I=\bigcap_{k=1}^\infty \overline{E}_k\subset\bd\Om$.
\end{lem}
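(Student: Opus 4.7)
The plan is to argue by contradiction and show that $\Mod_4(E_k,B,\Om)$ must stay bounded away from zero whenever the impression meets the interior of $\Om$. Suppose there exists $x\in I\cap\Om$. Since $x\in\overline{E_{k+1}}\cap \Om\subset E_k$ for every $k$, in fact $x\in\bigcap_{k\geq 1}E_k$; because each $E_k$ is open and $E_k\subset E_1\subset\Om\setminus B$, some ball around $x$ lies in $E_k$ and is therefore disjoint from $B$, so $x\notin\overline B$. Hence we may fix $r>0$ small enough that $\overline{B(x,2r)}\subset\Om\setminus\overline B$.

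Next we extract from each $E_k$ a continuum of definite diameter sitting inside $\overline{B(x,2r)}$. Since $E_k$ is open and connected in $\Hei$ (which is locally path connected) it is path connected, and since $\overline{E_k}\cap\bd\Om\neq\emptyset$ while $\overline{B(x,2r)}\subset\Om$, we can select a point of $E_k$ arbitrarily close to a point of $\overline{E_k}\cap\bd\Om$, hence lying outside $\overline{B(x,2r)}$. Joining it to $x$ by a path in $E_k$ and retaining the initial portion up to the first exit from $\overline{B(x,2r)}$ produces a continuum $F_k\subset\overline{E_k}\cap\overline{B(x,2r)}$ with $\diamH(F_k)\geq 2r$. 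Since $F_k\subset\overline{E_k}$, every curve from $F_k$ to $B$ in $\Om$ is a curve from $\overline{E_k}$ to $B$, so $\Mod_4(F_k,B,\Om)\leq\Mod_4(E_k,B,\Om)\to 0$.

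The decisive step is to produce a constant $c_0>0$, independent of $k$, such that $\Mod_4(F_k,B,\Om)\geq c_0$. This is where the Loewner structure of $\Hei$ enters: being $4$-Ahlfors regular and supporting a $(1,4)$-Poincar\'e inequality, $\Hei$ is $4$-Loewner. We would fix a rectifiable arc $\alpha\subset\Om$ joining $\overline{B(x,2r)}$ to $\overline B$, thicken it to a bounded open connected set $U\subset\Om$ containing $\overline{B(x,2r)}\cup\alpha\cup\overline B$, and use a standard chain-of-balls argument in $U$ combined with the local Loewner estimates on balls in $\Hei$ to bound $\Mod_4(F_k,B,U)$ from below by a positive constant that depends only on $r$, on $B$ and on the geometry of the chain, but not on $k$ (indeed $\diamH(F_k)\geq 2r$ and $B$ is of fixed size). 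Since $U\subset\Om$, monotonicity of the modulus in the ambient domain yields $\Mod_4(F_k,B,\Om)\geq\Mod_4(F_k,B,U)\geq c_0$, contradicting $\Mod_4(E_k,B,\Om)\to 0$. The main obstacle is precisely this uniform lower bound: the global Loewner bound in $\Hei$ cannot be transferred to $\Om$ directly, because enlarging the ambient domain only makes the modulus grow, so one is forced to stay inside $\Om$ and patch the Loewner estimate together along a chain of balls.
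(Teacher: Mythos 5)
First, a point of reference: the paper does not prove this lemma at all --- it is imported verbatim (specialised to $\Hei$) from Lemma A.10 of \cite{abbs}, so there is no in-paper proof to compare yours against. Judged on its own terms, your reduction is correct and follows the strategy of the cited source: the nesting $E_{k+1}\subset E_k$, the observation that a point $x\in I\cap\Om$ lies in $\bigcap_k E_k$ and hence off $\overline{B}$, the choice of $r$ with $\overline{B(x,2r)}\subset\Om\setminus\overline{B}$, the extraction from each $E_k$ of a continuum $F_k\subset E_k\cap\overline{B(x,2r)}$ joining $x$ to $\bd B(x,2r)$ (so $\diamH(F_k)\ge 2r$), and the monotonicity step $\Mod_4(F_k,B,\Om)\le\Mod_4(E_k,B,\Om)$ are all sound. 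This correctly reduces the lemma to a single claim: there is $c_0>0$, independent of $k$, with $\Mod_4(F_k,B,\Om)\ge c_0$.

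That claim, however, is asserted rather than proved, and it is the entire content of the lemma; moreover the mechanism you gesture at does not work as literally described. Patching local Loewner estimates along a chain of balls cannot be done by modulus monotonicity: the Loewner property of a ball $B_i$ in the chain bounds from below the modulus of curves joining two continua \emph{inside} $B_i$, and the long family $\Gamma(F_k,B,U)$ is \emph{minorized} by each such local family, which by part (5) of Lemma~\ref{lem-mod-prop} yields an \emph{upper} bound on $\Mod_4(F_k,B,U)$ --- the inequality points the wrong way. Also, a ``thickening of an arc'' $U$ is not automatically uniform, so Theorem~\ref{thm-uni-Loewn} cannot be applied to $U$ directly. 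The standard repair (and what underlies the auxiliary lemmas preceding A.10 in \cite{abbs}) is the capacity formulation: for $\varrho$ admissible for $\Gamma(F_k,B,\Om)$ set $u(y)=\min\bigl(1,\inf_\gamma\int_\gamma\varrho\,dl\bigr)$, the infimum over curves in $\Om$ from $F_k$ to $y$; then $u=0$ on $F_k$, $u\ge 1$ on $B$, and $\varrho$ is an upper gradient of $u$. Applying the $(1,4)$-Poincar\'e inequality along a fixed finite Harnack chain of balls in $\Om$ joining $B(x,2r)$ to $B$, together with a capacity lower bound for a continuum of diameter $\ge 2r$ inside $B(x,2r)$ (to show $u$ has small mean there), forces $\int_{\Hei}\varrho^4\,d\lambda\ge c_0$ with $c_0$ depending only on $r$, $B$ and the chain. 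Without this step carried out, what you have is a correct reduction, not a proof.
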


Let now $\{E_k\}_{k=1}^\infty$ be a (prime) chain in a domain $\Om\subset \Hei$. Notice that sets $D(E_k)$, as in Definition~\ref{def-chain-N}, for all $k$ satisfy assumptions of Lemma~\ref{lem-imp-bd}. Moreover, $\Gamma(E_k,B,\Om)<\Gamma(D(E_k),B,\Om)$ for all $k$, and thus, by Part 5 of Lemma~\ref{lem-mod-prop} implies that property (b) of Definition~\ref{def-pr-chain} holds for $\{D(E_k)\}_{k=1}^{\infty}$ and any continuum $F\subset \Om$ as well. If $B$ is a ball with $B\subset \Om\setminus D(E_1)$, then $F=\overline{B}$ is a continuum in $\Om$. Furthermore, $\Mod_4(D(E_k),B,\Om)\leq \Mod_4(D(E_k), F,\Om)$. Thus, Lemma~\ref{lem-imp-bd} implies that $I:=\bigcap_{k=1}^\infty \overline{D(E_k)}\subset\bd\Om$.

It turns out that one can define an equivalence relation on the set of prime chains in a given domain. This give rise to one of the main notions of our work, the so-called prime ends.

\begin{defn}\label{def-pr-end}
Two chains $\{E_k\}_{k=1}^\infty$ and $\{F_k\}_{k=1}^\infty$ are said to be equivalent if each domain $D(E_k)$ contains all but a finite number of the cross-sets $F_l$ and each domain $D(F_l)$ contains all but a finite number of the cross-sets $E_k$. The equivalence classes are called \emph{prime ends} of $\Omega$ and the set of all prime ends will be denoted $\bdyP \Omega$ and called the \emph{prime ends boundary}. We use the notation $[E_k]$ to denote the prime end defined by the prime chain $\{E_k\}_{k=1}^\infty$.
\end{defn}

If $[E_k]\in \bdyP \Om$, then the impression of any representative of $[E_k]$ is the same, and so the impression $I[E_k]$ of $[E_k]$ is well defined. By Theorem~\ref{qc-def-mod}, a quasiconformal map $f :\Om \to \Om'$, naturally extends to the prime ends by setting $f([E_k])=[f(E_k)]$.

We introduce a topology on the prime end boundary of a domain in $\Hei$. Similar construction in $\R^n$ is presented in \cite{na}, see also \cite[Section 8]{abbs} for a discussion in metric spaces. We then apply this topology in studying the extension of a quasiconformal map to a map between prime ends closures of underlying domains.

\begin{defn}\label{defn-topol}
A topology on  $\Om \cup \bdyP \Om$ is given by extending the relative toplogy of $\Omega$ by defining neighborhoods of prime ends as follows: A neighborhood of a prime end $[E_k] \in \bdyP \Om$ will have the form $U \cup U_P $ where
\begin{itemize}
\item[(a)] $U \subset \Om$ is open
\item[(b)] $\partial U \cap \partial \Om \ne \emptyset$
\item[(c)] $U \cup (\partial U \cap \partial \Om)= \tilde U \cap \Omega$ where $\tilde U$ is open.
\item[(d)] $D(E_k) \subset U$ for $k$ sufficiently large
\item[(e)] $U_P = \{ [F_l] \in \partial_P \Omega\,  :\, D(F_l) \subset U \, \, {\rm for} \, \, {\rm all } \, \, l \, \, {\rm sufficiently} \, \, {\rm large} \}$.
\end{itemize}
\end{defn}

We comment that another definition of a topology can be given if one defines the convergence of points and prime ends to a prime end. The above definition is similar in construction to the one given in Proposition 8.5 in \cite{abbs}, however, there, the constructed topology fails to be Hausdorff, cf. Example 8.9 in \cite{abbs}.

The topology as in Definition~\ref{defn-topol} is Hausdorff: It is clear that interior points of $\Omega$ are separated and points in the interior of $\Omega$ are separated from points in the prime end boundary $\bdyP \Om$. It remains to see that any pair of distinct points in  $\bdyP \Om$ are separated. To this end let $[E_j], [F_k] \in \bdyP \Om$ be distinct prime ends, then it follows that there exists $n \in \mathbb{N}$ such that $D(E_j) \cap D(F_k) = \emptyset$ for all $j,k \geq n$. If $U=D(E_n)$ and $V=D(F_n)$, then $(U \cup U_P) \cap (V \cup V_P) =\emptyset $, hence $[E_j]$ and $[F_k]$ are separated and the topology is indeed Hausdorff as claimed.

An important question to consider is: When does $\Omega \cup \partial_P \Omega$ together with the topology described above become a compact space? Obviously, it is necessary that $\Omega$ is relatively compact in the metric topology of $\Hei$ but delicate issues can arise with regards to $\partial_P \Omega$. In particular, if $\Omega$ is relatively compact and $\{U^\alpha\}$ is a covering of $\Omega$ by relatively open sets, then we can select a finite collection $\{U^\beta\}$ which covers $\Omega$ and write $\{U^\beta\}=\{U^{\beta_0}\}\cup \{U^{\beta_1}\}$ where each element of the collection $\{U^{\beta_1}\}$ satisfies $\partial U^{\beta_1} \cap \partial \Omega \ne \emptyset$. Then, one considers if $ \{ U^{\beta_0} \} \cup \{ U^{\beta_1} \cup U_P^{\beta_1} \}$ is a cover of  $\Omega \cup \partial_P \Omega$ which requires that every prime end of $\Omega$ belongs to $U_P^{\beta_1}$ for some $\beta_1$. If it is the case that all the prime ends have singleton impressions, then the this requirement is fulfilled. In the context of extension of quasiconformal maps, the domains of interest, the so-called collared domains, will be seen to have the property that all the prime ends have singleton impressions (see Section~\ref{sect-collared}).

Below we present our first extension result allowing us to extend a quasiconformal mapping between domains in $\Hei$ to a homeomorphism between the prime ends closures.

\begin{thm}\label{thm-hom-1}
Let $\Om$ and $\Om'$ be domains in $\Hei$ and let $f:\Om\to\Om'$ be a quasiconformal map of $\Om$ onto $\Om'$. The extended map $ F:\Om \cup \partial_P \Omega \to \Om' \cup \partial_P \Omega' $, where
\begin{equation*}
 F (p)=
 \begin{cases}
 f(p) & \hbox{if } p \in \Om \\
 [f(E_k)] & \hbox{if } p=[E_k] \in \partial_P \Om,
 \end{cases}
\end{equation*}
is a homeomorphism.
\end{thm}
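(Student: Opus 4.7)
The plan is to establish three properties of $F$ in sequence: well-definedness on $\bdyP\Om$, bijectivity, and continuity in both directions.

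For well-definedness, I would first verify that $f$ sends any prime chain $\{E_k\}$ in $\Om$ to a prime chain $\{f(E_k)\}$ in $\Om'$. Since $f$ is a homeomorphism, each $f(E_k)$ is again a cross-set and the separation property of Definition~\ref{def-chain-N} is preserved, with $f(D(E_k)) = D(f(E_k))$ (this is the component of $\Om' \setminus f(E_k)$ containing the connected set $f(E_{k+1})$). The two analytic conditions (a)--(b) in Definition~\ref{def-pr-chain} transfer via Theorem~\ref{qc-def-mod}: the $K^2$-quasi-invariance of $\Mod_4$ preserves the finiteness of $\Mod_4(E_{k+1}, E_k, \Om)$ and the decay $\Mod_4(E_k, F, \Om) \to 0$, noting that every continuum $F' \subset \Om'$ is the image $f(F)$ of a continuum in $\Om$. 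Equivalence of chains (Definition~\ref{def-pr-end}) is purely a topological containment statement between the $D(E_k)$'s and is preserved by the homeomorphism. Hence $F([E_k]) := [f(E_k)]$ is well defined. Since $f^{-1}$ is again $K$-quasiconformal, the same construction yields an extension $G$ of $f^{-1}$ which is a two-sided inverse of $F$, so $F$ is a bijection.

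For continuity, on $\Om$ the map $F=f$ is already continuous, and by the symmetric role of $f$ and $f^{-1}$ it suffices to show continuity of $F$ at each prime end $[E_k] \in \bdyP\Om$. Let $V \cup V_P$ be a neighborhood of $F([E_k])=[f(E_k)]$ as in Definition~\ref{defn-topol}. I would take $U := f^{-1}(V)$, which is open in $\Om$, and define $U_P$ as in Definition~\ref{defn-topol}(e). Conditions (a) and (d) are immediate: since $f(D(E_k))=D(f(E_k)) \subset V$ for all sufficiently large $k$, we also have $D(E_k) \subset U$ for the same $k$. For (b), the impression $I[E_k] \subset \bd\Om$ is non-empty and sits in $\overline{D(E_k)} \subset \overline{U}$ for every $k$, while being disjoint from $U \subset \Om$; hence $I[E_k] \subset \partial U \cap \partial \Om$. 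Granted these, any $[F_l] \in U_P$ satisfies $D(F_l) \subset U$ for large $l$, so $D(f(F_l)) = f(D(F_l)) \subset V$ and $[f(F_l)] \in V_P$. Thus $F(U \cup U_P) \subset V \cup V_P$, which gives continuity at $[E_k]$.

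The main obstacle is verifying the regularity condition (c) of Definition~\ref{defn-topol} for the candidate $U = f^{-1}(V)$, since $f$ is not a priori continuous on $\bd \Om$ and one cannot simply pull back the ambient open set $\tilde V$ witnessing (c) for $V$. The remedy I would pursue is to replace $U$ by a smaller neighborhood of the form $D(E_n) \cap f^{-1}(V)$ for $n$ sufficiently large, and to build the required ambient open set $\tilde U \subset \Hei$ from a metric neighborhood of the impression $I[E_k]$: Lemma~\ref{lem-imp-bd} together with condition (b) of Definition~\ref{def-pr-chain} forces the chain $\{\overline{D(E_k)}\}$ to shrink down to $I[E_k] \subset \bd\Om$, so for every $\tilde U$ containing $I[E_k]$ a deep tail of the chain lies inside $\tilde U \cap \Om$, and one can arrange $\tilde U \cap \overline{\Om} = U \cup (\partial U \cap \partial \Om)$ by taking $\tilde U$ to be a suitable union of such metric neighborhoods with $U$ itself.
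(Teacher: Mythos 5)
Your treatment of well-definedness, bijectivity, and the two set identities $F(U\cup U_P)=f(U)\cup f(U)_P$ and $F^{-1}(V\cup V_P)=f^{-1}(V)\cup f^{-1}(V)_P$ coincides with the paper's proof: cross-sets, separation and chain equivalence are preserved because $f$ is a homeomorphism with $f(D(E_k))=D(f(E_k))$, and conditions (a)--(b) of Definition~\ref{def-pr-chain} transfer by Theorem~\ref{qc-def-mod}. Where you go beyond the paper is in noticing that to conclude continuity one must exhibit a \emph{legitimate} basic neighborhood of $[E_k]$ inside $F^{-1}(V\cup V_P)$, i.e.\ one must verify condition (c) of Definition~\ref{defn-topol} for the pulled-back set; the paper simply declares $f^{-1}(V)\cup f^{-1}(V)_P$ open without addressing this, and the same issue recurs, unacknowledged, in its openness step for $f(U)$. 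You are right that this is the only delicate point: $f$ is a homeomorphism of the open domains only, so the ambient open set $\tilde V$ witnessing (c) for $V$ cannot be transported by $f^{-1}$.

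However, the repair you sketch in the last paragraph does not work as stated. Whatever open $U'$ you settle on, condition (d) forces $D(E_m)\subset U'$ for large $m$, hence $I[E_k]\subset\overline{U'}\setminus U'=\partial U'$ and so $I[E_k]\subset\partial U'\cap\partial\Om$; consequently any ambient open $\tilde U$ with $\tilde U\cap\partial\Om=\partial U'\cap\partial\Om$ must contain a metric ball $B(x_0,\epsilon)$ about each $x_0\in I[E_k]$, and then $\tilde U\cap\Om\supset B(x_0,\epsilon)\cap\Om$. For $\tilde U\cap\overline{\Om}=U'\cup(\partial U'\cap\partial\Om)$ one therefore needs $B(x_0,\epsilon)\cap\Om\subset U'\subset D(E_n)\cap f^{-1}(V)$, and this fails precisely in situations the theorem is supposed to cover: if a second, inequivalent prime end $[E'_k]$ has $x_0$ in its impression (the slit-type picture; recall that Theorem~\ref{thm-bij} needs collaredness exactly to rule this out), then $D(E'_m)$ meets $B(x_0,\epsilon)$ for every $\epsilon>0$ while being disjoint from $D(E_n)$, so no choice of $n$ or of the metric neighborhoods salvages the required equality. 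The only escape is to enlarge $U'$ until it absorbs these extra components, at which point $F(U'\cup U'_P)\subset V\cup V_P$ is no longer guaranteed. So the gap you identified is genuine, but your patch does not close it; a real fix has to come from the formulation of the topology itself (e.g.\ weakening or reformulating (c)) rather than from the mapping argument. Since the paper's own proof is silent on exactly this point, your write-up is no less complete than the original, but the final paragraph should be presented as an open issue rather than as a proof.
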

\begin{proof}
The map $F$ is well defined. Indeed, for $p\in \Om$ it follows from $f$ being homeomorphism. For $p=[E_k]\in \bdyP \Om$, the discussion following Definition~\ref{def-pr-end} gives us that the value of $F([E_k])$ is independent on the representative of $[E_k]$.

The extended map is a bijection. If $[F_l'] \in \partial_P \Om'$ and $F_l =f^{-1}(F_l')$ for all $l$, then $F([F_l])=[F_l']$ and that $\{F_l\}_{l=1}^{\infty}$ defines a (prime) chain and, thus, a prime end in $\bdyP \Om$, follows from $f$ being a homeomorphism and Theorem~\ref{qc-def-mod}. If $F([E_k])=F([F_l])$, then by the definition of $F$ it holds $[f(E_k)]=[f(F_l)]$ which implies, again by Theorem~\ref{qc-def-mod}, that $[E_k]=[F_l]$.

  Map $F$ is continuous.  If $V \cup V_P$ is a neighborhood contained in  $\Om'\cup \partial_P \Omega'$ such that $f([E_k]) \in V_P$ for some $[E_k]\in \bdyP \Om$, then $D(f(E_k)) \subset V$ for $k$ sufficiently large, and since $D(f(E_k))=f(D(E_k))$, we have that $D(E_k) \subset f^{-1}(V)$. It follows that the preimage $F^{-1}(V\cup V_P)$ is contained in $f^{-1}(V) \cup {f^{-1}(V)}_P$. Moreover, if $[F_l] \in f^{-1}(V)_P$ then $D(F_l) \subset  f^{-1}(V)$ for $l$ sufficiently large and $ f(D(F_l))=D(f(F_l)) \subset  V$. Hence $[f(F_l)] \in V_P$ and we conclude that the preimage
  \[
  F^{-1}(V\cup V_P)=f^{-1}(V) \cup f^{-1}(V)_P,
  \]
and hence is open implying that $F$ is continuous.

The extended map is open. Let $U \cup U_P$ be a neighborhood in $\Om \cup \partial_P \Om$ and let $[E_k] \in U_P$. It follows that $F([E_k]) \in f(U)_P$ since $f(D(E_k)) =D(f(E_k))$. Furthermore, if $[F_l] \in f(U)_P$ then $ f^{-1}(D(F_l)) \subset U$ for $l$ sufficiently large. Hence  $F(U \cup U_P)=f(U) \cup f(U)_P$.
\end{proof}

\subsection{The Loewner condition and uniform domains}\label{sect-loew-unif}
  Let $(X, d, \mu)$ be a rectifiably connected metric measure space of Hausdorff dimension $Q$ equipped with a locally finite Borel regular measure. Following Definition 8.1 in  chapter 8  of Heinonen~\cite{hein}, we define a \emph{Loewner function} $\Psi_{X}:(0,\infty) \to [0,\infty)$ by the formula
\begin{equation}\label{def-Loewn1}
 \Psi_{X,p}(t) :=\inf \{ \Mod_Q \Gamma(E, F, X)\,:\,\Delta(E,F) \leq t \},
\end{equation}
where $E, F \subset X$ are nondegenerate disjoint continua in $X$ and
$$\Delta(E, F)=\frac{\dist(E, F)}{\min\{\diam E, \diam F\}}$$
denotes the relative distance between sets $E$ and $F$.

We note that by definition $\Psi_{X,p}$ is decreasing.

\begin{defn}\label{def-Loewn}
 A rectifiably connected metric measure space $(X,d,\mu)$ is said to be $Q$-Loewner if $\Psi_{X}(t)>0$  for all $t>0$.
\end{defn}

In general, if a metric measure space $(X,d,\mu)$ satisfies some connectivity and volume growth conditions then it is $Q$-Loewner if and only if it supports a weak $(1,Q)$-Poincar\'e inequality (see Chapter 9 in \cite{hein}). A Carnot group $G$ equipped with the sub-Riemannian metric $d_s$ and Lebesgue measure is such a metric measure space and in fact a $Q$-Loewner space where $Q$ is the Hausdorff dimension of $(G,d_s)$ (see Proposition 11.17 in \cite{HajKosk}).

From the point of view of our studies where we use the Heisenberg metric in preference to the sub-Riemannian, we have the following result:

\begin{thm}(Heinonen--Koskela--Shanmugalingam--Tyson~\cite[Prop 14.2.9]{hkstB})\label{loew-heis}
The metric measure space\,\,\, $(\Hei, d_{\Hei}, \lambda)$ is $4$-Loewner.
\end{thm}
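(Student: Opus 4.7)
The plan is to deduce this from the already established Loewner property of $\Hei$ with the sub-Riemannian metric $d_S$, via the bi-Lipschitz equivalence between $d_S$ and $d_{\Hei}$ recorded just before (following Bella\"iche):
\[
\tfrac{1}{\sqrt{\pi}}\, d_S(p,q) \le d_{\Hei}(p,q) \le d_S(p,q).
\]
Write $L=\sqrt{\pi}$, so the identity map between $(\Hei,d_S)$ and $(\Hei,d_{\Hei})$ is $L$-bi-Lipschitz. The sub-Riemannian version, $(\Hei,d_S,\lambda)$ is $4$-Loewner, is exactly Proposition~11.17 of Haj\l asz--Koskela cited in the preceding paragraph; denote its Loewner function by $\Psi_S$. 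Crucially, the Haar/Lebesgue measure $\lambda$ does not depend on the choice of metric, so the only quantities that change between the two Loewner functions are the lengths of curves and the diameters/distances of sets, and both change by bounded factors.

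First I would verify that the relative distance is comparable in both metrics. For any two disjoint, nondegenerate continua $E,F\subset\Hei$, the bi-Lipschitz estimate gives
\[
L^{-1}\dist_S(E,F)\le \dist_{\Hei}(E,F)\le \dist_S(E,F),\qquad L^{-1}\diam_S\le \diam_{\Hei}\le \diam_S,
\]
hence $\Delta_{\Hei}(E,F)\le L^2\,\Delta_S(E,F)=\pi\,\Delta_S(E,F)$ and symmetrically $\Delta_S\le \pi\,\Delta_{\Hei}$.

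Second, I would compare the $4$-moduli. Both Heisenberg and sub-Riemannian lengths of a rectifiable curve $\gamma$ (recall $\ell_{\Hei}=\ell_S$ is already noted after Kor\'anyi's theorem, but even without that identity the bi-Lipschitz bound yields $L^{-1}\ell_S(\gamma)\le \ell_{\Hei}(\gamma)\le \ell_S(\gamma)$). Thus if $\rho$ is admissible for $\Gamma$ in the $d_{\Hei}$-metric, then $L\rho$ is admissible for $\Gamma$ in the $d_S$-metric, and vice versa. Integrating the $Q$-th power against the common measure $\lambda$,
\[
L^{-Q}\operatorname{Mod}_Q^{S}(\Gamma)\le \operatorname{Mod}_Q^{\Hei}(\Gamma)\le L^{Q}\operatorname{Mod}_Q^{S}(\Gamma),
\]
with $Q=4$, so the two moduli differ by at most a factor $\pi^{2}$.

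Combining the two comparisons gives the result: given $t>0$ and disjoint nondegenerate continua $E,F\subset\Hei$ with $\Delta_{\Hei}(E,F)\le t$, we have $\Delta_S(E,F)\le \pi t$, whence
\[
\operatorname{Mod}_4^{\Hei}\bigl(\Gamma(E,F,\Hei)\bigr)\ge \pi^{-2}\operatorname{Mod}_4^{S}\bigl(\Gamma(E,F,\Hei)\bigr)\ge \pi^{-2}\,\Psi_S(\pi t)>0,
\]
so $\Psi_{\Hei}(t)\ge \pi^{-2}\Psi_S(\pi t)>0$ for every $t>0$. The only mild subtlety, and the one place one has to pay attention, is that admissibility of the density requires the line integrals to be taken with respect to the correct arclength parametrization, but this is precisely what the bi-Lipschitz equivalence of lengths ensures; non-degeneracy and rectifiability of $E,F$ are manifestly metric-invariant under a bi-Lipschitz change. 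There is no real obstacle here beyond bookkeeping of constants; the content of the theorem is entirely absorbed by the sub-Riemannian Loewner property and Bella\"iche's estimate.
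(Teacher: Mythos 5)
Your argument is correct, but it takes a genuinely different route from the paper: the paper offers no proof of Theorem~\ref{loew-heis} at all --- the statement is imported verbatim as Proposition 14.2.9 of Heinonen--Koskela--Shanmugalingam--Tyson, with a remark that it also follows from Theorem 9.27 of Heinonen's book --- whereas you derive it from the sub-Riemannian Loewner property of $(\Hei,d_S,\lambda)$ (Proposition 11.17 of Haj\l asz--Koskela, which the paper does quote in the preceding paragraph) by transporting the Loewner function across the bi-Lipschitz equivalence $\tfrac{1}{\sqrt{\pi}}d_S\le d_{\Hei}\le d_S$. The reduction is sound: nondegeneracy, connectedness and rectifiable connectedness of the continua are invariant under an equivalence of metrics, the relative distance $\Delta$ changes by a bounded factor, and the $4$-moduli are comparable because the measure $\lambda$ is unchanged while curve lengths change by at most the bi-Lipschitz constant. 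Two remarks on constants, neither of which is a gap. First, your bound $\Delta_{\Hei}\le\pi\,\Delta_S$ can be sharpened to $\Delta_{\Hei}\le\sqrt{\pi}\,\Delta_S$ (and symmetrically), since the numerator and denominator of $\Delta$ do not both work against you. Second, since the paper records after Kor\'anyi's theorem that $l(\gamma)=l_S(\gamma)$ for horizontal curves, and only locally rectifiable --- hence almost everywhere horizontal --- curves carry positive modulus, the admissible classes and therefore the two moduli coincide exactly, so the factor $\pi^{-2}$ can be dropped and one gets $\Psi_{\Hei}(t)\ge\Psi_S(\sqrt{\pi}\,t)>0$ directly. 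What your route buys is a self-contained, quantitative proof from an ingredient the paper already cites; what the paper's bare citation buys is only brevity.
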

We remark that the previous theorem is also a consequence of Theorem 9.27 in \cite{hein}. Next, we recall a definition a uniform domains. Such domains play an important role in analysis and PDEs, see
Heinonen~\cite{hein}, Martio--Sarvas~\cite{MS}, N\"akki--V\"ais\"al\"a~\cite{nv91} and V\"ais\"al\"a~\cite{vaisala88} for more information on uniform domains. Examples of uniform domains encompass quasidisks, bounded Lipschitz domains and some domains with fractal boundaries such as the von Koch snowflake, see also Capogna--Garofalo~\cite{cg}, Capogna--Tang~\cite{ct}.

\begin{defn}\label{def-unifDom}
  A domain $\Omega \subset \Hei$ is called uniform, if there exists two positive constants $\alpha$ and $\beta$ such that each pair of points $x, y \in \Omega$ can be joined by a rectifiable curve $\gamma$ such that:
 \begin{itemize}
\item[(a)] $l(\gamma) \leq \beta d_{\Hei}(x, y)$
\item[(b)] $\alpha \min \{ l (\gamma_{xz}), l(\gamma_{yz})\} \leq  {\rm dist}_{\Hei} (z, \partial \Om)$ for all $z \in \gamma$, where $\gamma_{xz}$ ($\gamma_{yz}$) denote subarcs of $\gamma$ joining $x$ and $z$ ($y$ and $z$).
\end{itemize}
\end{defn}

An important example of uniform domains in $\Hei$ is provided by the following result.

\begin{lem}(Capogna--Garofalo~\cite[Cor 1]{cg})\label{lem-cap-tang}
 Balls in the Heisenberg metric are uniform domains.
\end{lem}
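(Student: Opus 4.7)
By left-invariance of $\dH$ and the scaling identity $\dH(\delta_r p,\delta_r q)=r\,\dH(p,q)$, both conditions (a) and (b) of Definition~\ref{def-unifDom} are invariant under pushing forward $B_{\dH}(p,r)$ to the unit ball $B=B_{\dH}(0,1)$ centered at the identity. So it suffices to prove the result for $B$. The fundamental tool is the one-sided estimate
\[
\dH(z,\bd B)\geq 1-\|z\| \qquad\text{for every } z\in B,
\]
which follows from $\|w\|\leq \|z\|+\dH(z,w)$ applied to each $w\in\bd B$. Note that only this lower bound is needed; equality can fail (for instance along the $t$-axis), a reflection of the fact that $B$ is not sub-Riemannian convex.

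Given $x,y\in B$, set $\rho=\dH(x,y)$ and split into two regimes. In Regime A, when $\rho\leq \tfrac14\min(1-\|x\|,1-\|y\|)$, take $\gamma$ to be a sub-Riemannian geodesic from $x$ to $y$. Then $l(\gamma)=d_S(x,y)\leq\sqrt{\pi}\rho$, which gives (a), and for any $z\in\gamma$ the triangle inequality together with $l(\gamma)\leq\sqrt\pi\rho$ yields $\|z\|\leq\|x\|+\sqrt{\pi}\rho\leq\|x\|+\tfrac{\sqrt{\pi}}{4}(1-\|x\|)$, so that $\dH(z,\bd B)\geq (1-\tfrac{\sqrt{\pi}}{4})(1-\|x\|)$, comfortably controlling (b).

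In Regime B, when $\rho> \tfrac14\min(1-\|x\|,1-\|y\|)$, choose sub-Riemannian geodesics $\sigma_x,\sigma_y$ from $0$ to $x$ and $0$ to $y$, parametrised by sub-Riemannian arc length, and take $\gamma=\sigma_x^{-1}\ast\sigma_y$. Then $l(\gamma)\leq\sqrt{\pi}(\|x\|+\|y\|)$; in this regime the trivial bounds $1-\|x\|\leq 4\rho$ or $1-\|y\|\leq 4\rho$, combined with $\|x\|+\|y\|\leq 2$, force $\|x\|+\|y\|$ to be comparable to $\rho$ up to a universal constant, which settles (a). For condition (b) on the leg $\sigma_x$, at $z=\sigma_x(s)$ one has $\|z\|\leq d_S(0,z)= s$, so $\dH(z,\bd B)\geq 1-s$, while $l(\sigma_{xz})=d_S(0,x)-s\leq\sqrt{\pi}(1-s)/c$ via the comparison $d_S\leq\sqrt\pi\dH$ with $1-\|x\|$; the analogous estimate holds on $\sigma_y$. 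Taking $\alpha$ to be a small enough universal constant then yields (b).

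\textbf{Main obstacle.} The delicate step is Regime B, where the curve must plunge from near the boundary down to $0$ and back out. The naive hope—that one could directly connect $x$ and $y$ by a short sub-Riemannian geodesic staying in $B$—fails because sub-Riemannian geodesics in $\Hei$ are Heisenberg helices that need not respect the non-convex geometry of the Heisenberg sphere. The argument circumvents this by replacing the (false) equality $\dH(z,\bd B)=1-\|z\|$ with the (correct) inequality $\dH(z,\bd B)\geq 1-\|z\|$, and by routing through the origin, along which $\|z\|$ can be controlled linearly in the arc-length parameter via the equivalence of $d_S$ and $\dH$.
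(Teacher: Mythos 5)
The paper offers no proof of this lemma to compare against: it is imported verbatim from Capogna--Garofalo \cite[Cor 1]{cg}, so your argument has to stand entirely on its own. It does not, and the fatal problem is Regime B. Your curve there is the concatenation of geodesics from $x$ to $0$ and from $0$ to $y$, so $l(\gamma)\ge d_S(x,0)+d_S(0,y)\ge \|x\|+\|y\|$. But Regime B contains pairs with $\dH(x,y)$ arbitrarily small: take $x=(1-\delta,0)$ and $y=((1-\delta)e^{i\theta},0)$ with $\theta\sim\delta^{2}$, so that $\|x\|=\|y\|=1-\delta$ and $\rho=\dH(x,y)\approx\delta>\tfrac14\min(1-\|x\|,1-\|y\|)$. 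Then $l(\gamma)\ge 2(1-\delta)$ while condition (a) demands $l(\gamma)\le\beta\rho\approx\beta\delta$, which fails for every fixed $\beta$ as $\delta\to0$. The assertion that the hypotheses of Regime B ``force $\|x\|+\|y\|$ to be comparable to $\rho$'' is simply false ($1-\|x\|\le4\rho$ is a lower bound on $\|x\|$, not an upper bound). Any correct construction must descend from $x$ and $y$ only to depth comparable to $\rho$ and join the two interior points there, not route through the center.

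Even where the detour through $0$ is harmless ($\rho\asymp1$), the verification of (b) on the legs is broken. You write $\dH(z,\bd B)\ge 1-s$ with $s=d_S(0,z)$; but $d_S(0,x)$ can be as large as $\sqrt{\pi}\,\|x\|>1$ (the ratio $\sqrt{\pi}$ is attained on the $t$-axis), so on a nontrivial final portion of the leg one has $s\ge1$ and the bound is vacuous --- it does not even show that the leg stays inside $B$. The subsequent inequality $d_S(0,x)-s\le\sqrt{\pi}(1-s)/c$ is false: as $s\to1^-$ its left-hand side tends to $\sqrt{\pi}\,\|x\|-1>0$ while the right-hand side tends to $0$. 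Nor can the estimate be repaired by bookkeeping: near the characteristic points $(0,\pm1)$ of the Kor\'anyi sphere the crude bound $\dist(z,\bd B)\ge1-\|z\|$ loses a square root (for $z=(0,\tau)$ one computes $\dist(z,\bd B)=(1-\tau)^{1/2}\asymp(1-\|z\|)^{1/2}$), and along the geodesic from $0$ to a point $(0,t)$ the quantity $1-\|z\|$ decays like the \emph{cube} of the remaining arclength to the endpoint, so no positive $\alpha$ in (b) can be extracted from it. Controlling the geometry at these characteristic points is precisely the nontrivial content of the Capogna--Garofalo result (which establishes the stronger NTA property); it is not recovered by the triangle inequality and the equivalence of $d_S$ and $\dH$ alone.
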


We note that the class of uniform domains is independent of the choice between the Heisenberg metric $d_{\Hei}$ or the sub-Riemannian metric $d_S$. Indeed, this is clear once it is observed that the length of a curve in the sub-Riemannian metric is the same as the length in the Heisenberg metric, and that the two metrics are equivalent. Similarly, the class of quasiconformal maps is the same regardless of which metric we use. We can, thus, state the following theorem for the Heisenberg metric even though in Capogna--Tang~\cite{ct} it is proved only for the sub-Riemannian metric (cf. Theorem 2.15 in Martio--Sarvas~\cite{MS} for the prototypical result in the Euclidean setting).

\begin{thm}{\cite[Thm 3.1]{ct}} Let $\Om \subset \Hei$ be a uniform domain with constants $\alpha$ and $\beta$. If $f : \Hei \to \Hei$  is a global $K$-quasiconformal map, then $f (\Om )$ is a uniform domain with constants $\alpha'$ and $\beta'$ depending on $\alpha$, $\beta$, $K$ and the homogeneous dimension $Q=4$.
\end{thm}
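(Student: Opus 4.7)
The plan is to adapt the classical Martio--Sarvas Euclidean argument to the Heisenberg setting, using the fact that global quasiconformal self-maps of $\Hei$ are globally quasisymmetric as the replacement for the Euclidean analytic distortion estimates. The backbone of the approach is: the image under $f$ of a curve $\ga$ satisfying Definition~\ref{def-unifDom} in $\Om$ is itself such a curve (with degraded constants) in $f(\Om)$, so $f(\Om)$ is uniform.

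I would first invoke the Heinonen--Koskela theorem, applicable since $(\Hei,d_{\Hei},\lambda)$ is $Q$-Ahlfors regular and $Q$-Loewner by Theorem~\ref{loew-heis}, to conclude that $f$ is globally $\eta$-quasisymmetric with $\eta$ depending only on $K$ and $Q=4$. In particular, for any $x,z,y \in \Hei$ with $d_{\Hei}(x,z) \leq C d_{\Hei}(x,y)$ one has $d_{\Hei}(f(x),f(z)) \leq \eta(C)\, d_{\Hei}(f(x),f(y))$, and the analogous diameter comparison for bounded sets. Now fix $x',y' \in f(\Om)$, set $x=\fv(x'),\,y=\fv(y')$, choose a curve $\ga$ joining $x,y$ in $\Om$ with constants $(\alpha,\beta)$ as in Definition~\ref{def-unifDom}, and let $\ga'=f\circ\ga$. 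This is the candidate uniform curve.

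For condition (a), the trick is that the second condition in Definition~\ref{def-unifDom} forces $\ga$ to be "Whitney-like": one decomposes $\ga$ into countably many subarcs $\ga_j$ whose diameters are comparable to their distances to $\bd\Om$, with $\sum_j \diamH \ga_j$ comparable to $l(\ga)$. On each such subarc, quasisymmetry together with the presence of a Heisenberg ball in $\Om$ enclosing $\ga_j$ controls both $\diamH f(\ga_j)$ and $l(f(\ga_j))$ by a constant multiple of $\diamH f(\ga_j)$ (the relevant geometry is essentially that of a round ball, where qs-maps behave well). Summing and using $l(\ga) \leq \beta\, d_{\Hei}(x,y)$ yields
\[
 l(\ga') \,\leq\, C(K,\alpha,\beta)\,\diamH \ga' \,\leq\, C(K,\alpha,\beta)\cdot 2\eta(\beta)\, d_{\Hei}(x',y'),
\]
so (a) holds with $\beta'$ depending only on $\alpha,\beta,K,Q$.

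For condition (b), fix $z' \in \ga'$, put $z=\fv(z')$ and $d=\dist_{\Hei}(z,\bd\Om)$. The Heisenberg ball $B(z,d)$ lies in $\Om$, and quasisymmetry sends it to a set containing a ball of radius $r' \sim \diamH f(B(z,d))$ around $z'$, so $\dist_{\Hei}(z',\bd f(\Om)) \gtrsim r'$. Applying the same Whitney-decomposition argument to the subarcs $\ga_{xz},\ga_{yz}$ (whose lengths are bounded by $d/\alpha$ by the uniformity of $\ga$) converts the bound $\alpha\min\{l(\ga_{xz}),l(\ga_{yz})\}\leq d$ into $\alpha'\min\{l(\ga'_{x'z'}),l(\ga'_{y'z'})\}\leq\dist_{\Hei}(z',\bd f(\Om))$ with an $\alpha'$ depending only on $\alpha,\beta,K,Q$. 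The main obstacle is exactly the length bound in (a): quasiconformal maps do not preserve arc length, so the reduction to a diameter bound through the Whitney decomposition of a uniform curve---and the crucial fact that on a ball-shaped piece quasisymmetry gives length $\sim$ diameter---is the technical heart of the argument, and all the quantitative dependence on $Q=4$ enters through the Loewner/Ahlfors-regular constants that make the quasisymmetry theorem applicable.
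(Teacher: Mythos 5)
First, a point of calibration: the paper does not prove this theorem at all. It is quoted from Capogna--Tang \cite{ct}, and the authors' only contribution is the remark immediately preceding the statement, namely that the class of uniform domains (and of quasiconformal maps) is unchanged if the sub-Riemannian metric $d_S$ used in \cite{ct} is replaced by the equivalent Heisenberg metric $d_{\Hei}$, since the two metrics assign the same length to every curve. So your argument is really being measured against Capogna--Tang's, not against anything in this paper.

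Your proposal has a genuine gap at the step controlling condition (a). The strategy of taking $\ga'=f\circ\ga$ as the candidate uniform curve cannot work as written: quasisymmetry controls ratios of distances between triples of points, but gives no control on the length of the image of a curve. The assertion that on a Whitney piece $\ga_j$ ``quasisymmetry together with the presence of a Heisenberg ball enclosing $\ga_j$'' yields $l(f(\ga_j))\leq C\,\diamH f(\ga_j)$ is false: a quasisymmetric map restricted to a ball can send a geodesic segment to a curve whose length exceeds its diameter by an arbitrarily large factor, and indeed to a non-rectifiable curve --- absolute continuity of a quasiconformal map holds only off an exceptional curve family of zero $4$-modulus, and your particular $\ga$ may lie in it. The paper itself points to Balogh's examples \cite{Zolt} of global quasiconformal maps of $\Hei$ that are not Lipschitz and distort Hausdorff dimension; such maps destroy any hoped-for length-versus-diameter comparability of image arcs. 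The proof in \cite{ct} (following Gehring--Osgood and Martio--Sarvas) avoids image curves entirely: uniformity is recharacterized by the growth condition $k_\Om(x,y)\leq c\,\log\bigl(1+d_{\Hei}(x,y)/\min\{\dist(x,\bd\Om),\dist(y,\bd\Om)\}\bigr)+c$ on the quasihyperbolic metric, one proves that quasiconformal maps of $\Hei$ distort the quasihyperbolic metric in a controlled way (this is where the Loewner property and $Q=4$ enter), and the uniform curve in $f(\Om)$ is then built from scratch, essentially as a quasihyperbolic geodesic, rather than transported from $\Om$. Your quasisymmetry input (via Heinonen--Koskela) and your treatment of the distance-to-the-boundary comparison in (b) are sound ingredients, but without replacing the length bound in (a) by a pointwise condition that is quasisymmetrically invariant, the argument does not close.
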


The following theorem uses the concept of a space being locally $Q$-Loewner which is somewhat technical in its definition and of no concern anywhere else in the discusssion, so we direct the reader to Bonk--Heinonen--Koskela~\cite{BonkHeinKosk} and Herron~\cite{herr} for details rather than provide them here.

\begin{thm}{\cite[Thm 6.47]{BonkHeinKosk}} \label{thm-uni-Loewn}
 An open connected subset $\Omega$ of a locally compact $Q$-regular $Q$-Loewner space is locally $Q$-Loewner. In particular, uniform subdomains of such spaces are $Q$-Loewner.
\end{thm}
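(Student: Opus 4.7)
The plan is to handle the two assertions separately, with the first (local $Q$-Loewner property of any open connected $\Om \subset X$) being essentially a bootstrap from the global Loewner property of $X$, and the second (uniformity promotes local to global) relying on the Heinonen--Koskela equivalence between the $Q$-Loewner condition and a weak $(1,Q)$-Poincar\'e inequality together with the transfer of Poincar\'e inequalities to uniform subdomains.

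For the local assertion, I would fix $p \in \Om$, pick $r_p > 0$ with $B(p, 4r_p) \subset \Om$, and consider disjoint nondegenerate continua $E, F \subset B(p, r_p)$. The Loewner property of $X$ gives $\modq \Gamma(E, F; X) \geq \Psi_X(\Delta(E,F)) > 0$. The curves in $\Gamma(E, F; X)$ split into two subfamilies: those that stay inside $B(p, 2r_p) \subset \Om$, and those that exit it. The escape subfamily crosses the spherical annulus $B(p, 2r_p) \setminus B(p, r_p)$, so an admissible density of the form $\varrho = r_p^{-1} \chi_{B(p, 2r_p)}$ bounds its $Q$-modulus above by $c\, \mu(B(p, 2r_p))/r_p^Q$, which is a constant depending only on $Q$-regularity. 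Choosing the working scale so that this upper bound is at most half of $\Psi_X(\Delta(E, F))$ for the relative distances of interest, subtraction yields a positive lower bound for $\modq \Gamma(E, F; \Om)$, giving the local Loewner function on the scale $r_p$.

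For the second assertion I would invoke the Heinonen--Koskela characterization: on a locally compact $Q$-regular metric measure space the $Q$-Loewner condition is equivalent to a weak $(1, Q)$-Poincar\'e inequality. Since $X$ satisfies the latter, the task reduces to showing that a uniform domain $\Om \subset X$ inherits a weak $(1, Q)$-Poincar\'e inequality. Given a function $u$ on $\Om$ and an upper gradient $g$, and a ball $B \subset \Om$, the uniform arc $\gamma_{xy}$ from Definition~\ref{def-unifDom} joining $x, y \in B$ satisfies condition (b), so $\dist_{\Hei}(z, \partial \Om) \gtrsim \min\{l(\gamma_{xz}), l(\gamma_{yz})\}$ along $\gamma_{xy}$. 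This allows one to cover $\gamma_{xy}$ by a Whitney-type chain of balls $\{B_i\}$ lying comfortably inside $\Om$, each twice enlarged still inside $\Om$, with bounded overlap and radii comparable to the distance to the boundary. Applying the ambient Poincar\'e inequality on each $B_i$ and chaining the telescoping estimates $|u_{B_i} - u_{B_{i+1}}|$ over the chain produces the desired $(1, Q)$-Poincar\'e estimate on $\Om$; length-control (a) ensures the chain has length comparable to $d_{\Hei}(x, y)$, so the resulting constants depend only on $\alpha, \beta, Q$ and the ambient constants. Converting back via the Heinonen--Koskela equivalence yields the global $Q$-Loewner property of $\Om$.

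The main obstacle is the chaining step in the second part: one must build, for each pair of points in a uniform domain, a Whitney chain of balls compactly contained in $\Om$ along the uniform curve, track the overlap multiplicity, and ensure that the length of the chain is controlled in terms of $d_{\Hei}(x, y)$ rather than of the quasihyperbolic distance. Conditions (a) and (b) of Definition~\ref{def-unifDom} are precisely calibrated so that the chain is subadditive in length and each link is a Poincar\'e ball in $X$, but the bookkeeping of constants, together with the need to handle points near $\partial \Om$ where the Whitney decomposition becomes dense, is the delicate combinatorial core of the argument.
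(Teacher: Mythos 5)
The paper does not prove this statement; it is quoted verbatim from Bonk--Heinonen--Koskela \cite[Thm 6.47]{BonkHeinKosk} (with a pointer to Herron \cite{herr}), so there is no internal proof to compare against and your proposal must stand on its own. It does not, because of a quantitative error at the heart of part one. The admissible density $\varrho=r_p^{-1}\chi_{B(p,2r_p)}$ gives the escape family the bound $c\,\mu(B(p,2r_p))/r_p^{Q}\le c'2^{Q}$, and by $Q$-regularity this bound is \emph{scale-invariant}: it is the same fixed constant for every choice of $r_p$. Hence "choosing the working scale so that this upper bound is at most half of $\Psi_{X}(\Delta(E,F))$" is impossible -- shrinking $r_p$ changes nothing, and for continua with large relative distance $\Psi_{X}(\Delta(E,F))$ is small, so the subadditivity subtraction $\Mod_Q\Gamma(E,F,\Om)\ge \Psi_X(\Delta(E,F))-c'2^{Q}$ yields a negative, hence vacuous, lower bound. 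The correct device is a \emph{thick} annulus $B(p,\Lambda r)\setminus B(p,r)$ with the logarithmic density $\varrho(x)=\bigl(d(x,p)\log\Lambda\bigr)^{-1}$, whose $Q$-mass is $\lesssim(\log\Lambda)^{1-Q}\to 0$ as $\Lambda\to\infty$; one then takes $\Lambda=\Lambda(t)$ so large that the escape modulus is below $\tfrac12\Psi_X(t)$ and requires $B(p,\Lambda r)\subset\Om$. The dependence of $\Lambda$ on the relative-distance parameter $t$ is precisely why the locally-Loewner definition carries the quantifiers that make it ``somewhat technical,'' as the paper remarks.

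For the second assertion your route through the Heinonen--Koskela equivalence and a Poincar\'e-inequality transfer to uniform domains is a legitimate alternative to the direct modulus-chaining argument of \cite{BonkHeinKosk}, but as written it is a plan rather than a proof: the chaining step you defer to ``delicate bookkeeping'' is a theorem in its own right (Bj\"orn--Shanmugalingam, \emph{J. Math. Anal. Appl.} 332 (2007)), and several hypotheses of the equivalence must be verified for $\Om$ before it can be invoked at all -- that $\Om$ with the restricted measure is itself $Q$-regular (this uses the corkscrew property of uniform domains), that one works on the complete space $\overline{\Om}$ and can then pass from the Loewner property of $\overline{\Om}$ back to that of $\Om$ with curve families confined to the open set, and that the connectivity/quasiconvexity hypotheses of the equivalence hold. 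None of these is addressed, so even granting the chaining lemma the second part is incomplete.
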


The following consequences of Theorem~\ref{thm-uni-Loewn} will be of vital importance from the point of view of the notation of collardness and prime ends, cf. Definition~\ref{def-right-collared}.

\begin{thm} {\cite[Fact 2.12]{herr}} \label{herr} Let $\Omega$ be a uniform subdomain of a locally compact $Q$-regular $Q$-Loewner space and let  $E$ and $F$ be nondegenerate connected subsets of $\overline \Omega$ with $\overline E \cap \overline F \ne \emptyset$, then $\Mod_Q(E,F, \Omega)= \infty$.
\end{thm}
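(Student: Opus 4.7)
The plan is to derive the infinitude of $\modq$ from the $Q$-Loewner property of $\Omega$ itself. Theorem~\ref{thm-uni-Loewn} already gives that $\Omega$ is $Q$-Loewner, and in the ambient $Q$-regular setting the Loewner function $\Psi_{\Omega}$ does not merely stay positive on $(0,\infty)$, but diverges as its argument tends to $0$. This is a standard fact, obtained by a chain-of-balls construction combined with the subadditivity of $\modq$ and iterated use of the Loewner lower bound, yielding a quantitative estimate of the form $\Psi_{\Omega}(t)\gtrsim \log(1/t)$ for all sufficiently small $t>0$; compare Heinonen~\cite{hein}, Chapter~8. Granted this divergence, the proof reduces to producing subcontinua of $E$ and $F$ whose relative distance tends to zero.

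Given this, I would fix a point $x_{0}\in\overline{E}\cap\overline{F}$ and choose sequences $p_{n}\in E$, $q_{n}\in F$ with $p_{n},q_{n}\to x_{0}$. Since $E$ and $F$ are nondegenerate connected sets, $\overline{E}$ and $\overline{F}$ are continua of positive diameter, so one can fix $0<r<\min\{\diam\overline{E},\diam\overline{F}\}$. By the classical continuum boundary-bumping lemma applied to $\overline{E}$ with the ball $B(p_{n},r)$, for each sufficiently large $n$ there exists a subcontinuum $E_{n}\subset\overline{E}$ with $p_{n}\in E_{n}$ and $\diam E_{n}\geq r$; similarly pick $F_{n}\subset\overline{F}$ with $q_{n}\in F_{n}$ and $\diam F_{n}\geq r$. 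Consequently
\[
\Delta(E_{n},F_{n})\;\leq\;\frac{d_{\Hei}(p_{n},q_{n})}{r}\;\longrightarrow\;0.
\]

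Applying the Loewner estimate of $\Omega$ then yields
\[
\modq\,\Gamma(E_{n},F_{n},\Omega)\;\geq\;\Psi_{\Omega}\bigl(\Delta(E_{n},F_{n})\bigr)\;\longrightarrow\;\infty.
\]
Because $E_{n}\subset\overline{E}$ and $F_{n}\subset\overline{F}$, every curve of $\Gamma(E_{n},F_{n},\Omega)$ also joins $E$ and $F$ inside $\Omega$, so $\Gamma(E_{n},F_{n},\Omega)\subset\Gamma(E,F,\Omega)$ and monotonicity of modulus (Part~5 of Lemma~\ref{lem-mod-prop}) forces $\modq(E,F,\Omega)=\infty$.

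The main obstacle lies in the interplay with $\partial\Omega$: when $x_{0}\in\partial\Omega$, the chosen continua $E_{n}$ and $F_{n}$ can be tangent to the boundary, so one has to confirm that the intrinsic Loewner estimate for $\Omega$ genuinely applies to such continua sitting in $\overline{\Omega}$, and that the final inclusion of curve families is legitimate. I would resolve this by approximating $E_{n}$ and $F_{n}$ from inside $\Omega$ with interior subcontinua and invoking the standard stability of $\modq$ under Hausdorff convergence of continua; the uniformity of $\Omega$ provides such interior approximants while preserving the required control on relative distances.
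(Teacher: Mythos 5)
The paper offers no proof of this statement: it is quoted directly from Herron~\cite[Fact 2.12]{herr}, so there is no internal argument to compare yours against. Your overall strategy is nonetheless the standard one behind results of this type: the Loewner function of a $Q$-regular $Q$-Loewner space blows up logarithmically at $0$ (Heinonen, Ch.~8 --- note that you should also verify that the uniform subdomain $\Omega$ is itself $Q$-regular, which follows from the corkscrew property of uniform domains), boundary bumping yields subcontinua of definite diameter whose relative distance tends to $0$, and monotonicity of the modulus finishes. Up to the caveat that boundary bumping should be applied to the compact sets $\overline E$, $\overline F$ and one then passes back to $E$, $F$ via an argument in the spirit of Lemma~\ref{lem-non-rect}, this part of your argument is sound; in fact, since $\dist(E,F)=0$ already, the subcontinua are only needed to produce genuine compact connected sets to feed into the Loewner function.

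The genuine gap is exactly where you flag it, and the repair you sketch does not close it. The continua $E_n$, $F_n$ may lie partly or entirely in $\partial\Omega$ (the hypothesis only puts $E$ and $F$ in $\overline\Omega$), whereas the intrinsic Loewner estimate for $\Omega$, as in Definition~\ref{def-Loewn}, concerns disjoint nondegenerate continua \emph{inside} $\Omega$; it simply does not apply to $E_n$, $F_n$. Replacing them by interior approximants $E_n'$, $F_n'$ destroys the inclusion $\Gamma(E_n',F_n',\Omega)\subset\Gamma(E,F,\Omega)$ because $E_n'\not\subset E$, and appending uniform curves from $E_n'$ back to $E$ produces a family that is minorized by $\Gamma(E_n',F_n',\Omega)$, so Part 5 of Lemma~\ref{lem-mod-prop} gives an inequality in the wrong direction. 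Likewise, ``stability of $\Mod_Q$ under Hausdorff convergence of continua'' is not a standard fact that can be invoked as a black box; the modulus is in general only semicontinuous under such convergence. The actual content of Herron's Fact 2.12, and of the Bonk--Heinonen--Koskela results it rests on, is precisely that a uniform subdomain satisfies a Loewner-type lower bound for continua in $\overline\Omega$ with the connecting curves running in $\Omega$; this is established by rerunning the chaining/Poincar\'e argument using the corkscrew and quasiconvexity properties of $\Omega$, not by soft approximation from the inside. Your argument is complete when $E,F\subset\Omega$ and their closures meet at an interior point, but in the boundary case --- the case for which the theorem is actually invoked in this paper, e.g.\ in Lemma~\ref{sep-cond} and Theorem~\ref{thm-bij} --- the key estimate is assumed rather than proved.
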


\begin{lem} \label{sep-cond} If $\{E_k\}$ is a prime chain in a uniform subdomain $\Omega \subset \Hei$, then conditions (a) and (b) in Definition~\ref{def-pr-chain} become, respectively:
\begin{itemize}
\item [(a)] $\dist_{\Hei}(E_k,E_{k+1})>0$ for $k=1,2,\ldots$,
\item [(b)] $\lim_{k\to \infty}\diamH(E_k)=0$.
\end{itemize}
\end{lem}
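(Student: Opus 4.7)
The plan is to prove both (a) and (b) by contradiction, in each case combining a prime chain condition from Definition~\ref{def-pr-chain} with a modulus tool available on uniform subdomains of $\Hei$. The two tools are Theorem~\ref{herr} (which forces $\Mod_4(E,F,\Omega) = \infty$ whenever $E, F \subset \overline{\Omega}$ are nondegenerate connected and $\overline{E} \cap \overline{F} \neq \emptyset$) and the $4$-Loewner property of $\Omega$ itself, granted by Theorem~\ref{thm-uni-Loewn} applied together with Theorem~\ref{loew-heis}. Throughout I assume $\Omega$ is bounded, as is standard when one speaks of prime ends.

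For (a), I would suppose $\distH(E_k, E_{k+1}) = 0$ for some $k$ and pick sequences $x_n \in E_k$, $y_n \in E_{k+1}$ with $\dH(x_n, y_n) \to 0$. Using properness of $\Hei$ and boundedness of $\Omega$, pass to a convergent subsequence $x_{n_j} \to x$; then $y_{n_j} \to x$ as well, so $x \in \overline{E_k} \cap \overline{E_{k+1}}$. Since cross-sets are connected and nondegenerate, Theorem~\ref{herr} yields $\Mod_4(E_k, E_{k+1}, \Omega) = \infty$, contradicting condition (a) of Definition~\ref{def-pr-chain}.

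For (b), I would suppose $\diamH(E_k) \not\to 0$ and extract a subsequence with $\diamH(E_{k_j}) \geq \delta > 0$. Fix any continuum $F \subset \Omega$, for example a closed ball inside $\Omega \setminus D(E_1)$. The key estimate is a uniform bound on the relative distance,
\[
\Delta(E_{k_j}, F) = \frac{\distH(E_{k_j}, F)}{\min\{\diamH(E_{k_j}), \diamH(F)\}} \leq \frac{\diamH(\Omega)}{\min\{\delta, \diamH(F)\}} =: C,
\]
coming from boundedness of $\Omega$ and the lower bound on $\diamH(E_{k_j})$. By Theorem~\ref{thm-uni-Loewn}, $\Omega$ is $4$-Loewner, so $\Mod_4(\Gamma(E_{k_j}, F, \Omega)) \geq \Psi_{\Omega, 4}(C) > 0$ uniformly in $j$, contradicting condition (b) of Definition~\ref{def-pr-chain}.

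The main technical subtlety, common to both parts, is that the Loewner function is quantified over continua, whereas cross-sets are only relatively closed in $\Omega$ and thus need not be compact. I would handle this by replacing $E_{k_j}$ with a compact connected subset $K_j \subset E_{k_j}$ of diameter at least $\delta/2$, extracted using the connectedness of $E_{k_j}$, and invoking monotonicity of the modulus under the inclusion $\Gamma(K_j, F, \Omega) \subset \Gamma(E_{k_j}, F, \Omega)$. An analogous local compactness step underlies the extraction of the common limit point $x$ in (a).
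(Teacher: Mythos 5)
Your proposal is correct and follows essentially the same route as the paper: part (a) is exactly the paper's one-line appeal to Theorem~\ref{herr}, and part (b) is the contrapositive form of the paper's argument that boundedness of $\Omega$ forces $\Delta(E_k,F)\to\infty$ via the $4$-Loewner property of uniform subdomains, hence $\diamH(E_k)\to 0$. Your extra remark about cross-sets being only relatively closed (so not literally the continua over which $\Psi_{\Omega,4}$ is defined) flags a point the paper silently glosses over; just be aware that extracting a continuum of diameter uniformly bounded below from $E_{k_j}$ is itself not entirely trivial when $E_{k_j}$ accumulates on $\partial\Omega$, so that step would need a little more care (e.g.\ a boundary-bumping argument inside a fixed compact subset of $\Omega$) if you wanted it fully rigorous.
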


\begin{proof}
 Property (a) is an immediate consequence of Theorem \ref{herr}. To see that (b) holds we first note that condition (b) in Definition~\ref{def-pr-chain}, and $\Delta(E_k, F) \leq t_0$ for some $t_0>0$ and all $k$, together imply that $\Psi_{X}(t_0)=0$. This contradicts the Loewner condition, and hence $\Delta(E_k,F) \to \infty$ which in turn implies (b).
\end{proof}

\subsection{Collared domains}\label{sect-collared}

 The notion of collared domains in $\R^n$ was introduced by V\"ais\"al\"a, see Definition 17.5 in \cite{va1} in the context of the boundary behavior of quasiconformal mappings, see also N\"akki~\cite{na1, na2, na3}. Moreover, in \cite{na}, N\"akki employed collaredness in the studies of prime ends based on the conformal modulus in Euclidean domains. In this section we introduce collared domains in Heisenberg setting. This notion will be subsequently used to develop prime ends theory in $\Hei$.

In some sense  V\"ais\"al\"a's definition of a collared domain is a manifold with boundary where the coordinate maps of charts containing boundary components are quasiconformal with target in the closed upper half space. In the spirit of V\"ais\"al\"a we arrive at the following definition:

\begin{defn}\label{def-collared} A domain $\Omega\subset \Hei$ is said to be locally quasiconformally collared at $x \in \partial \Om$ if there exists a neighborhood $U\subset \Hei$ of $x$ such that $U \cap \Omega$ is uniform and there exists a homeomorphism $h$ of $U \cap \bar \Omega$ onto a half ball $B(x_0,r)_+$ such that $h(x)=x_0 \in \mathbb{C}$ and  $h|_{U \cap \Omega}$ is quasiconformal.  We call $(U,h)$ a {\it collaring} coordinate and note that $U \cap \partial \Omega$ is mapped onto the open disc $ B(x_0,r)_+ \cap \mathbb{C}$.
\end{defn}

Our definition differs from Definition 17.5 in \cite{va1} in the following ways: firstly we do not require $x_0=0$ so as to ensure that balls satisfy the definition via stereographic projection, in particular we cannot follow up with a normalisation to $x_0=0$ with left translation since left multiplication does not stabilize $\mathbb{C}$,  and secondly and perhaps most importantly, we impose the assumption of some local uniformity which is required so that Lemma \ref{sep-cond} is applicable. However, there is one major drawback in this definition, namely, we need to know that half balls in $\Hei$ are uniform which is made difficult to verify since half balls are not mapped to half balls under left translations while the definition of ball is a left translation of the ball at the origin.

Moreover, there is another problem with mimicking \cite{va1}. Namely, in Theorem 17.10, and in \cite{na} Lemma 2.3 it is proved that in the Euclidean case, collaring coordinates satisfy a local Loewner property without assuming local uniformity. However their proof relies on estimates involving the modulus of curve families contained in spheres (\cite[Sec 10]{va1}), which are not available to us in the Heisenberg setting, due to the fact that spheres contain very few horizontal curves.

Therefore, we propose the following approach to collardness.

\begin{defn}\label{def-right-collared} A domain $\Omega \subset \Hei$ is said to be locally quasiconformally collared at $x \in \partial \Om$ if there exists a uniform subset $U \subseteq \Omega $ and a quasiconformal map $h:U \to B(0,1)$ such that $h(U)=B(0,1)$ and:
\begin{itemize}
\item[(a)] $x \in \partial U$.
\item[(b)] $h$ extends homeomorphically to a map $h:\partial U \cap \partial \Om\to \partial B(0,1)$ such that $h(\partial U \cap \partial \Om) \subseteq \partial B(0,1)$ is connected, closed, and contains $h(x)$ in the interior of $\partial B(0,1)$ (in the topology of $\partial B(0,1)$).
\end{itemize}
We call $(U,h)$ a {\it collaring} coordinate of $x$.
\end{defn}

\begin{ex}\label{ex-ball-coll}
 A ball $B$ in $\Hei$ is collared. Indeed, since $B$ is a uniform domain by Lemma~\ref{lem-cap-tang} we are allowed to take $U=B$ and $h=Id$ in Definition~\ref{def-right-collared}.
\end{ex}

\begin{observ}\label{lem-unif-coll}
 Let $\Om\subset\Hei$ be a bounded uniform domain such that $\Om$ is an image of the unit ball $B(0,1)$ under a quasiconformal mapping $f$ with a continuous extension to a map $f:\partial U\to\partial B(0,1)$. Then, $\Om$ is collared.
\end{observ}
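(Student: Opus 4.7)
The plan is, for every $x\in\partial\Om$, to verify Definition~\ref{def-right-collared} by making the direct choice $U:=\Om$ and $h:=f^{-1}:\Om\to B(0,1)$. The hypothesis that $\Om$ is uniform immediately makes $U$ a uniform subset of $\Om$, and the quasiconformality of $h$ is automatic from that of $f$, since in the Heisenberg setting the inverse of a quasiconformal map is again quasiconformal (for instance, via the modulus characterization in Theorem~\ref{qc-def-mod}, whose bounds are symmetric in $f$ and $f^{-1}$). By construction $h(U)=B(0,1)$.

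Condition (a) of the definition is then trivial, since $U=\Om$ gives $x\in\partial U=\partial\Om$ for every boundary point. For condition (b) I would appeal to the continuous boundary extension of $f$ supplied by the hypothesis. Interpreting this extension as a homeomorphism $\bar f:\overline{B(0,1)}\to\overline{\Om}$ restricting to a homeomorphism $\partial B(0,1)\to\partial\Om$, the inverse $\bar f^{-1}$ provides a homeomorphic extension of $h$ from $\partial U\cap\partial\Om=\partial\Om$ onto the entire Heisenberg sphere $\partial B(0,1)$. This image is connected and closed in $\partial B(0,1)$, and since it equals the whole sphere, every point, in particular $h(x)$, lies in its interior with respect to the subspace topology of $\partial B(0,1)$, as required.

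The only genuinely delicate point is justifying that the continuous boundary extension of $f$ is indeed a homeomorphism rather than merely a continuous surjection. Since $\partial B(0,1)$ is compact and $\partial\Om\subset\Hei$ is Hausdorff, a continuous bijection between them is automatically a homeomorphism, so the matter reduces to injectivity of the boundary extension, which I would take as implicit in the hypothesis (otherwise $f$ could identify boundary arcs and collaredness of $\Om$ could genuinely fail). Thus the verification of Definition~\ref{def-right-collared} is essentially bookkeeping once the correct choice $(U,h)=(\Om, f^{-1})$ has been made, and the principal conceptual role is played by the uniformity of $\Om$, which is what allows us to avoid passing to an auxiliary neighborhood inside $\Om$.
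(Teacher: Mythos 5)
Your proposal takes essentially the same route as the paper, which simply sets $U:=\Om$ and takes $h$ to be the given quasiconformal map (the paper writes $h:=f$, glossing over the direction issue that you correctly resolve by using $f^{-1}$), and then observes that $h(\partial U\cap\partial\Om)=\partial B(0,1)$ so that condition (b) of Definition~\ref{def-right-collared} holds. Your additional care about quasiconformality of the inverse and about upgrading the hypothesized \emph{continuous} boundary extension to the \emph{homeomorphic} one demanded by the definition addresses a genuine looseness in the paper's statement, but does not change the argument.
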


\begin{proof}
 In Definition~\ref{def-right-collared} let $U:=\Om$ and $h:=f$. Then (a) holds trivially, while $h(\partial U \cap \partial \Om)=h(\partial U)=\partial B(0,1)$ and thus (b) holds true as well.
\end{proof}

\begin{rem}
The following variant of the above definition of collardness at $x\in \partial \Om$ can be used as well. As in Definition~\ref{def-right-collared} we consider a a neighborhood $U \subset \Omega$, not necessarily uniform, such that $x \in \partial U$, but we require (b) to hold with respect to a global quasiconformal map $h: \Hei \to \Hei$ such that  $h(U)=B(0,1)$. The advantage of such a approach is that by Lemma~\ref{lem-cap-tang} together with Proposition 4.2 and Theorem 4.4 in \cite{ct}, we know that balls are uniform and the their image under global quasiconformal mappings are uniform.

In either the definition which we have settled on, or this alternative case, since $\Hei$ admits a large family of locally quasiconformal mappings, there is a large family of collared domains. Moreover, in \cite{Zolt}, Balogh constructs global quasiconformal maps on $\Hei$ which  are not Lipschitz and distort Hausdorff dimension. It is therefore possible that a collared domain can have a  complicated boundary, i.e., not rectifiable.
\end{rem}

In what follows we will appeal also to the following connectedness properties of the boundaries.
\begin{defn}\label{def-fin-con}
We say that $\Omega\subset \Hei$ is \emph{finitely connected at a point} $x \in \bd \Om$ if for every $r>0$ there exists a bounded open set $V$ in $\Hei$ containing $x$ such that $x\in V\subset B(x, r)$ and $V \cap \Omega$ has only finitely many components. If $\Omega$ is finitely connected at every boundary point, then we say it is finitely connected at the boundary.

In particular, if $V\cap \Om$ has exactly one component, then we say that $\Om$ is \emph{locally connected at} $x\in \bd \Om$
\end{defn}

\begin{defn}\label{def-collared-domain}
A domain $\Omega$ is said to be collared if every boundary point is locally quasiconformally collared.
\end{defn}

From now on by collared domain we will understand domains collared in the sense of Definition~\ref{def-right-collared}.

The following result extends discussion in Sections 6.3 and 6.4 in N\"akki~\cite{na3} and Theorem 17.10 in V\"ais\"al\"a~\cite{va1} to the setting of $\Hei$ and relates notions of collardness and boundary connectivity of the domain.

\begin{observ}\label{lem-coll-finit}
 Collared domains in $\Hei$ are locally connected at the boundary.
\end{observ}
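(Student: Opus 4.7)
The plan is to transport a connected, half-ball-type neighborhood of $h(x)$ in the unit Heisenberg ball back to $\Om$ via the collaring, then thicken it slightly in $\Hei$ while staying inside the collaring chart. Fix $x\in\partial\Om$ and a collaring $(U,h)$ of $x$ as in Definition~\ref{def-right-collared}; then $h$ extends to a homeomorphism $\bar h\colon U\cup(\partial U\cap\partial\Om)\to B(0,1)\cup K$ with $K:=h(\partial U\cap\partial\Om)$ closed and connected in $\partial B(0,1)$ and with $h(x)$ in the relative interior of $K$. I read the collaring hypothesis, in the spirit of V\"ais\"al\"a's chart-based precursor Definition~\ref{def-collared}, as implying that $U\cup(\partial U\cap\partial\Om)$ is a relative neighborhood of $x$ in $\overline\Om$; concretely, this gives $\delta_0>0$ with $B(x,\delta_0)\cap\Om\subseteq U$. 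Choose also a relatively open set $N\subseteq K$ in $\partial B(0,1)$ containing $h(x)$.

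Next I would build an explicit connected model neighborhood of $h(x)$ in $\overline{B(0,1)}$ whose trace in $B(0,1)$ is also connected. A direct Hessian computation shows that $(z,t)\mapsto|z|^4+t^2$ is convex on $\R^3\cong\Hei$, and since evaluating it at $p_0^{-1}p$ amounts to composing with an affine map of the coordinates of $p$, the same convexity holds for every Heisenberg ball. Thus $\overline{B(0,1)}$ is a compact convex subset of $\R^3$ with nonempty interior, hence homeomorphic to the closed Euclidean unit ball via some map $\phi\colon\overline{B(0,1)}\to\overline{B_E(0,1)}$ that carries $\partial B(0,1)$ onto the Euclidean sphere. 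Setting $p^*:=\phi(h(x))$, for each $\eta>0$ the set $H_\eta:=B_E(p^*,\eta)\cap\overline{B_E(0,1)}$ is path-connected, while $H_\eta\cap B_E(0,1)$ is an intersection of two Euclidean open balls, hence convex. Consequently $W:=\phi^{-1}(H_\eta)$ is a connected neighborhood of $h(x)$ in $\overline{B(0,1)}$ whose trace $W\cap B(0,1)=\phi^{-1}(H_\eta\cap B_E(0,1))$ is connected as well.

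Given $r>0$, I would shrink $\eta$ to arrange (i) $W\cap\partial B(0,1)\subseteq N$, possible since $N$ is a relative neighborhood of $h(x)$, and (ii) $\tilde W:=\bar h^{-1}(W)\subseteq B(x,\min(\delta_0,r))$, possible by continuity of $\bar h^{-1}\circ\phi^{-1}$ at $p^*$. Property (i) gives $W\subseteq B(0,1)\cup K$, so $\tilde W$ is well defined. Since $\tilde W$ is open in the subspace $U\cup(\partial U\cap\partial\Om)\subseteq\Hei$, there exists $V_0$ open in $\Hei$ with $V_0\cap(U\cup(\partial U\cap\partial\Om))=\tilde W$, and I would set $V:=V_0\cap B(x,\min(\delta_0,r))$. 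Then $V$ is open in $\Hei$, contains $x$, and lies in $B(x,r)$. Using $V\subseteq B(x,\delta_0)$ together with $B(x,\delta_0)\cap\Om\subseteq U$ forces $V\cap\Om\subseteq V_0\cap U=\tilde W\cap U=\bar h^{-1}(W\cap B(0,1))$, while (ii) yields the reverse containment. Hence $V\cap\Om=\bar h^{-1}(W\cap B(0,1))$ is a homeomorphic image of a connected set, and so connected, proving local connectedness at $x$.

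The main obstacle I anticipate is precisely the connectedness of $W\cap B(0,1)$: a generic small connected neighborhood of a boundary point of $\overline{B(0,1)}$ need not have connected interior trace (two small boundary caps joined by a thin boundary arc is the stock counterexample). Routing through the Euclidean homeomorphism $\phi$ and taking $H_\eta\cap B_E(0,1)$ to be a convex intersection of open Euclidean balls is what bypasses this. A secondary care is in reading the local-neighborhood conclusion $B(x,\delta_0)\cap\Om\subseteq U$ out of the clause ``$h(x)$ lies in the relative interior of $K$'' in Definition~\ref{def-right-collared}; without it, shrinking $V$ to $B(x,\delta_0)$ might still pick up stray components of $\Om$ coming from outside the collaring chart.
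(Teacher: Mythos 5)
Your argument is correct, but it takes a genuinely different route from the paper's for the key step. The paper disposes of the model domain in one line: $B(0,1)$ is uniform (Capogna--Garofalo) and uniform domains are locally connected at the boundary (Proposition 11.2 in \cite{abbs}), so a connected relative neighborhood of $h(x)$ exists and is pulled back through $h^{-1}$. You instead prove the model statement by hand: the gauge $(z,t)\mapsto|z|^4+t^2$ is convex on $\R^3$, so $\overline{B(0,1)}$ is a convex body homeomorphic to the closed Euclidean ball, and intersecting with small Euclidean balls around the image of $h(x)$ produces neighborhoods whose open trace is convex, hence connected. What your approach buys is self-containedness and, more importantly, precision on two points the paper's proof glosses over: (1) Definition~\ref{def-fin-con} demands a set of the form $V\cap\Om$ with $V$ open \emph{in $\Hei$}, whereas the paper only exhibits a connected subset of $U\cap B(x,r)$ without checking it arises this way; (2) one must prevent $V\cap\Om$ from picking up components of $\Om\setminus U$, which forces the reading that $U\cup(\partial U\cap\partial\Om)$ is a relative neighborhood of $x$ in $\overline\Om$ (i.e.\ $B(x,\delta_0)\cap\Om\subseteq U$ for some $\delta_0>0$). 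That hypothesis is not literally present in Definition~\ref{def-right-collared} (unlike the V\"ais\"al\"a-style Definition~\ref{def-collared}, where $U$ is a neighborhood of $x$ in $\Hei$), but it is implicitly needed by the paper's proof as well; you are right to flag it rather than hide it. The only cost of your route is length, plus a mild reliance on the reading of ``extends homeomorphically'' in Definition~\ref{def-right-collared}(b) as giving a homeomorphism of $U\cup(\partial U\cap\partial\Om)$ onto $B(0,1)\cup K$ with continuous inverse, which is the intended meaning and is also what the paper uses.
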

We remark that using N\"akki's definition of collardness the above lemma stays that a collared domain is finitely connected at the boundary, see Theorem 6.4 and Corollary 6.6 in \cite{na3}.
\begin{proof}
 Let $\Om\subset \Hei$ be a collared domain and consider any $x\in \bd \Om$. Let $U$ be as in Definition~\ref{def-right-collared} with $x\in \bd U$. Let $B(x,r)\subset \Hei$ be as in Definition~\ref{def-fin-con}. Recall that $h(U)=B(0,1)$ is locally connected at the boundary, since $B(0,1)$ is uniform by Corollary 1 in Capogna--Garofalo~\cite{cg} and uniform domains are locally connected at the boundary, see Proposition 11.2 in \cite{abbs}. Therefore, we can choose an open connected set $V\subset \Hei$ with $h(x)\in \partial (V\cap B(0,1))$. Since $h$ is a homeomorphism, we obtain that $h^{-1}(V\cap B(0,1))$ is a connected subset of $U\cap B(x,r)$ and $x\in \partial (U\cap h^{-1}(V\cap B(0,1))$. Thus, $\Om$ is locally connected at $x$.
\end{proof}

The following observation will play a fundamental role in our studies.

\begin{observ}\label{obs-coll-spe}
 Let $\Om\subset \Hei$ be collared. Then for every boundary point $x\in\bd \Om$ there exists a singleton prime end $[E_k]$ such that $I[E_k]=\{x\}$.
\end{observ}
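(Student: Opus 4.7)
The plan is to use the collaring coordinate $(U, h)$ from Definition \ref{def-right-collared} to transport the problem to the model domain $B(0,1)$, build there a prime chain with singleton impression $\{y\}$ where $y := h(x) \in \partial B(0,1)$, and then pull the chain back to $\Om$ via $h^{-1}$. By Definition \ref{def-right-collared} the set $U \subseteq \Om$ is uniform, $h : U \to B(0,1)$ is quasiconformal, and $h$ extends homeomorphically so that $y$ lies in the relative interior of the closed connected set $h(\partial U \cap \partial \Om) \subseteq \partial B(0,1)$; in particular, we may choose $r_0 > 0$ with $\overline{B(y, r_0)} \cap \partial B(0,1) \subseteq h(\partial U \cap \partial \Om)$.

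In $B(0,1)$, fix a decreasing sequence $r_k \downarrow 0$ with $r_k < r_0$, chosen generically so that $F_k := \partial B(y, r_k) \cap B(0,1)$ is, for each $k$, a connected topological $2$-disk; this is plausible because Heisenberg balls and spheres are topologically standard in $\Hei \cong \R^3$. Then $F_k$ is relatively closed in $B(0,1)$ and separates $B(0,1)$ into the near component $D_k := B(0,1) \cap B(y, r_k)$ and a far component, both of whose closures meet $\partial B(0,1)$; so each $F_k$ is a cross-set in the sense of Definition \ref{def-cross-set}, and $F_{k+1}$ separates $F_k$ from $F_{k+2}$ by construction. For Definition \ref{def-pr-chain}(a), the closures $\overline{F_k}$ and $\overline{F_{k+1}}$ in $B(0,1)$ are disjoint, so the constant density $\rho \equiv 1/\dist_{\Hei}(F_k, F_{k+1})$ is admissible for the family of curves joining them and has finite $L^4$-norm on the bounded set $B(0,1)$, giving $\Mod_4(F_{k+1}, F_k, B(0,1)) < \infty$. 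For (b), any continuum $F \subset B(0,1)$ has $\dist_{\Hei}(F, \partial B(0,1)) > 0$, so $\dist_{\Hei}(F_k, F)$ is eventually bounded below while $\diam_{\Hei}(F_k) \leq 2r_k \to 0$, whence $\Delta(F_k, F) \to \infty$. Since $B(0,1)$ is a uniform subdomain of the $4$-Loewner Ahlfors-regular space $\Hei$ (Lemma \ref{lem-cap-tang}, Theorem \ref{loew-heis}, Theorem \ref{thm-uni-Loewn}), the standard upper modulus decay at large relative distance yields $\Mod_4(F_k, F, B(0,1)) \to 0$.

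Set $E_k := h^{-1}(F_k) \subseteq U \subseteq \Om$. For small $r_k$ every boundary point of $\overline{F_k}$ on $\partial B(0,1)$ lies in the relative interior of $h(\partial U \cap \partial \Om)$, where $h^{-1}$ extends continuously; hence $E_k$ is connected, relatively closed in $\Om$, and $\overline{E_k} \cap \partial \Om \neq \emptyset$. Because $\overline{F_k}$ stays away from $\partial B(0,1) \setminus h(\partial U \cap \partial \Om)$, the set $\overline{E_k}$ does not reach the inner boundary $\partial U \cap \Om$, and therefore $\Om \setminus E_k$ has exactly two components: the near one $h^{-1}(D_k)$, and the far one, which is connected because the far component of $U \setminus E_k$ is joined to the remainder $\Om \setminus U$ along $\partial U \cap \Om$. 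Hence each $E_k$ is a cross-set of $\Om$ and $\{E_k\}$ is a chain. The prime-chain conditions for $\{E_k\}$ in $\Om$ follow from those for $\{F_k\}$ via the quasiconformal invariance of $\Mod_4$ (Theorem \ref{qc-def-mod}): for $k$ large, every curve in $\Om$ joining $E_{k+1}$ to $E_k$, or $E_k$ to a fixed continuum after discarding finitely many indices, can be truncated at its last entry to $h^{-1}(D_1) \subset U$, so that the relevant modulus in $\Om$ is comparable to the modulus in $U$, which transfers to $B(0,1)$ by $h$. Finally, $I[E_k] = \bigcap_k \overline{h^{-1}(D_k)} = \{x\}$, since $\bigcap_k \overline{D_k} = \{y\}$ and $h^{-1}$ extends continuously with $h^{-1}(y) = x$.

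The principal obstacle is the topological claim that, for a generic sequence $r_k \downarrow 0$, the Heisenberg sphere $\partial B(y, r_k)$ meets $B(0,1)$ transversally in a topological $2$-disk. Though Heisenberg spheres are topologically $S^2$ and $B(0,1)$ is a topological $3$-ball, excluding non-generic radii requires a Sard-type or approximation argument inside the sub-Riemannian geometry of $\Hei$. A secondary delicate point is the comparison of moduli across $\Om$, $U$ and $B(0,1)$: one must confine curves to $U$ to apply the quasiconformal invariance, which rests on the fact that the sets $E_k$ accumulate only at $x$, far from the rest of $\partial U$ inside $\Om$.
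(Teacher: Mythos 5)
Your construction is essentially the paper's own proof: the paper likewise sets $F_k=\partial B(h(x),1/k)\cap B(0,1)$, declares these to be cross-sets, verifies the prime-chain conditions using the uniformity of $B(0,1)$ together with Lemma~\ref{sep-cond}, and pulls back by $h^{-1}$ using the uniformity of $U$. The two points you flag as delicate are handled more briskly there: the connectivity of the sphere caps is asserted as immediate, and the modulus conditions in $\Om$ are deduced directly from the metric conditions $\dist_{\Hei}(E_k,E_{k+1})>0$ and $\diam_{\Hei}(E_k)\to 0$ (via Lemma~\ref{sep-cond} and Theorem~\ref{herr}) rather than by your truncation-and-comparison argument.
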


\begin{proof}
Let $x\in\bd \Om$ and $(U, h)$ be its collaring coordinate as in Definition~\ref{def-right-collared}. Let $x_0=h(x)\in \partial B(0,1)$ and define sets $E_k$ by
\begin{equation}\label{qc-coll-n}
E_k:=h^{-1}(\partial B(x_0,1/k) \cap B(0,1))
\end{equation}
for $k=1,2,\ldots$. It is an immediate observation that $F_k:=\partial B(x_0,1/k) \cap B(0,1)$ are cross-sets for all $k$, cf. Definition~\ref{def-cross-set}. Moreover, since by Lemma~\ref{lem-cap-tang} we have that $B$ is a uniform domain, then the definition of the sets $F_k$, and Lemma~\ref{sep-cond}, imply that $\{F_k\}_{k=1}^{\infty}$ is a prime chain in $B(0,1)$ as in Definition~\ref{def-pr-chain}. By construction $I[F_k]=\{x_0\}$. Definition of $E_k$ in ~\eqref{qc-coll-n} together with the uniformity of $U$ imply that every $E_k$ satisfies conditions (a) and (b) of Lemma~\ref{sep-cond},  and hence, by Theorem~\ref{herr}, we conclude that $\{E_k\}_{k=1}^{\infty}$ is a prime chain in $\Om$. Clearly $I[E_k]\subset \partial \Om$ and $x\in I[E_k]$. By Lemma~\ref{sep-cond} it holds that $\diam_{\Hei} E_k \to 0$ for $k\to \infty$ and, hence $I[E_k]$ is a singleton.
\end{proof}

 In what follows we will call such a prime chain a \emph{canonical prime chain} associated with $x\in \partial \Om$ and similarly the associated prime end will also be called a \emph{canonical prime end} associated with $x$.

\begin{thm}\label{thm-bij}
 If $\Omega$ is a collared domain, then the impression map $I:\bdyP \Om \to \bd \Om$ is a bijection.
\end{thm}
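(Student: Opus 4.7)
The plan is to establish the theorem by simultaneously proving that every prime end has a singleton impression and that the impression map is bijective, through comparison with the canonical prime ends produced by Observation \ref{obs-coll-spe}. Surjectivity is immediate: for every $x\in\bd\Om$, the canonical prime chain of Observation \ref{obs-coll-spe} has impression exactly $\{x\}$, so $x$ lies in the image of $I$.

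The core of the argument is the following claim: if $[F_j]\in\bdyP\Om$ and $x\in I[F_j]$, then $[F_j]$ is equivalent to the canonical prime end $[E_k]$ at $x$. This yields both that every impression is a singleton (from $I[F_j]=I[E_k]=\{x\}$) and injectivity of $I$. To prove it, I would fix the collaring coordinate $(U,h)$ at $x$ with $h\colon U\to B(0,1)$ quasiconformal, extending to a homeomorphism on $\bd U\cap\bd\Om$ mapping $x$ to $x_0\in\bd B(0,1)$. The central technical step is to show that $D(F_j)\subset U$ for all sufficiently large $j$. I would argue this by combining the local connectedness of $\Om$ at $x$ (Observation \ref{lem-coll-finit}) with the modulus condition (b) in Definition \ref{def-pr-chain}: if $D(F_j)$ were to meet a fixed continuum $F\subset\Om\setminus\overline U$ for infinitely many $j$, then a Loewner-type lower bound (Theorem \ref{loew-heis}) on $\Mod_4(F_j,F,\Om)$ would contradict $\Mod_4(F_j,F,\Om)\to 0$.

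Once $F_j\subset U$ for large $j$, Theorem \ref{qc-def-mod} implies $\{h(F_j)\}$ is a prime chain in the uniform domain $B(0,1)$ (Lemma \ref{lem-cap-tang}). By Lemma \ref{sep-cond} we have $\diamH h(F_j)\to 0$, and the boundary extension of $h$ in Definition \ref{def-right-collared}(b) forces the singleton impression of $\{h(F_j)\}$ to equal $\{h(x)\}$, yielding $I[F_j]=\{x\}$. For the equivalence of $\{F_j\}$ with the canonical chain $\{E_k\}$ in the sense of Definition \ref{def-pr-end}, I would work in $B(0,1)$, where both pushforward chains have cross-sets shrinking to $\{h(x)\}$ with positive mutual separations, so each $D$-component of one chain eventually engulfs the other chain's cross-sets; pulling back via $h^{-1}$ yields the desired equivalence in $\Om$.

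The main obstacle is the localization step $D(F_j)\subset U$ for $j$ large: a priori, $D(F_j)$ could stretch far from $x$ even when $x\in\overline{D(F_j)}$, and $\Om$ itself need not be uniform, so Lemma \ref{sep-cond} is unavailable globally. The geometric control must therefore be extracted purely from the modulus decay together with the Loewner property of $\Hei$, using that collaredness provides uniformity only in a neighborhood of $x$.
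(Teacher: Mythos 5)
Your overall architecture differs from the paper's: you try to show that \emph{every} prime end whose impression contains $x$ is equivalent to the canonical prime end at $x$ (which would simultaneously give singleton impressions and injectivity), whereas the paper argues by contradiction starting from two distinct prime ends $[E_i]$, $[F_j]$ with the \emph{same singleton} impression $\{x\}$, extracts points $x_i\in E_i\cap\partial D(F_j)\subset F_j$ converging to $x$, concludes $x\in\overline{F_j}$ for all $j$, and then applies Theorem~\ref{herr} inside the uniform collaring neighborhood $U$ (where $\overline{F_j}\cap\overline{F_{j+1}}\ne\emptyset$) to get $\Mod_4(F_{j+1},F_j,U)=\infty$, contradicting condition (a) by monotonicity. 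Your plan is more ambitious, but its central step fails as described.

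The gap is the localization claim $D(F_j)\subset U$ for large $j$. First, condition (b) of Definition~\ref{def-pr-chain} constrains the modulus of curves joining the \emph{cross-sets} $F_j$ to a fixed continuum $F$; it says nothing directly about the \emph{components} $D(F_j)$. A component $D(F_j)$ can meet a fixed continuum $F\subset\Om\setminus\overline U$ while the bounding cross-set $F_j$ is tiny and far from $F$, so no contradiction with $\Mod_4(F_j,F,\Om)\to 0$ arises from $D(F_j)\cap F\ne\emptyset$. Second, even if you replace $D(F_j)$ by $F_j$ here, the Loewner property of $\Hei$ (Theorem~\ref{loew-heis}) lower-bounds $\Mod_4\bigl(\Gamma(E,F,\Hei)\bigr)$, where curves may run through all of $\Hei$; it does not lower-bound $\Mod_4(F_j,F,\Om)$, since a general collared domain $\Om$ is neither uniform nor Loewner (the inequality between the two moduli goes the wrong way). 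The only places where a Loewner-type lower bound is available are the uniform collaring neighborhoods, via Theorem~\ref{thm-uni-Loewn}, Theorem~\ref{herr} and Lemma~\ref{sep-cond} --- but your continuum $F$ sits in $\Om\setminus\overline U$, outside their reach. Without this localization you cannot transfer $\{F_j\}$ to $B(0,1)$ via $h$, and the rest of the argument does not get off the ground. To repair it you would need to extract, as the paper does, information about where the cross-sets $\overline{F_j}$ accumulate on $\partial\Om$ and then invoke the infinite-modulus criterion of Theorem~\ref{herr} inside $U$; that is essentially forced, and it is the route the paper takes.
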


\begin{proof}
By Observation~\ref{obs-coll-spe}, the impression map is onto so we only need to show that it is injective. If the impression map is not injective then we can find distinct prime ends $[E_i]$ and $[F_j]$ such that $I[E_i]=I[F_j]=\{x\}$ for some $x\in \partial \Om$ or equivalently
\[
\bigcap_i \overline{D(E_i)} =  \bigcap_j \overline{D(F_j)} = \{x\}.
\]

Since $[E_i] \ne [F_j]$, we can assume that for each $j$ there exists $n_j \in \mathbb{N}$ such that $n_j \geq j$ and $E_i \nsubseteq D(F_j)$ for all $i \geq n_j$ (note that it in the last assertion it may be necessary to pass to a subsequence of the $\{E_i\}_{i=1}^{\infty}$ denoted again, for simplicity, by $\{E_i\}_{i=1}^{\infty}$). If in this case we have $E_i \cap D(F_j) = \emptyset$, then by choosing $i$ larger if necessary, we may assume  by (a) in Lemma~\ref{sep-cond} that $\overline{E}_i \cap \overline{D(F_j)} = \emptyset$. It then follows that  $\overline{D(E_i)} \cap \overline{D(F_j)} = \emptyset$ which contradicts $I[E_i]=I[F_j]=\{x\}$. Therefore, we must assume that  $E_i \cap D(F_j) \ne \emptyset$ for all $i \geq n_j$ which implies $E_i \cap \partial D(F_j) \ne \emptyset$ for all $i \geq n_j$. For each $i \geq n_j$ choose $x_i \in E_i \cap \partial D(F_j)$, it then follows that $x$ is a limit point of $\Omega \setminus D(F_j)$  since $x_i \to x$ and so $x \in \overline{F_j} \cap \partial \Omega$ which contradicts item (a) in the Definition~\ref{def-pr-chain} of prime chain, namely  $\Mod_{Q}(F_{j+1}, F_{j}, \Om)<\infty$. In particular, if $(U,h)$ is a collaring coordinate at $x$, then for $j$ sufficiently large we have $\overline F_j \subset U \cap \overline \Om$ and, since $\dist_{\Hei}(F_{j+1},F_j)=0$, Lemma \ref{herr} implies $\Mod_{Q}(F_{j+1}, F_{j}, U \cap \Om)=\infty$.  By the monotonicity of the $Q$-modulus it follows that $\Mod_{Q}(F_{j+1}, F_{j}, \Om)=\infty$ which contradicts the assumption that $\{F_j\}_{j=1}^{\infty}$ is a prime chain.
\end{proof}

In the following result we study the extension of a quasiconformal map to a homeomorphic transformation between the topological and the prime ends closures of a domain and the target domain, respectively. Results of this kind have long history and record, arising from Carath\'eodory's idea of prime ends. V\"ais\"al\"a~\cite[Section 3]{va2} proved it in the special case of a ball in $\R^n$, whereas N\"akki~\cite[Theorem 4.1]{na} studied the setting of collared domains in $\R^n$.

\begin{thm}\label{thm-key-res}
Let $\Om\subset \Hei$ be a collared domain and let $f:\Om\to\Om'$ be a quasiconformal map of $\Om$ onto a domain $\Om'\subset \Hei$. Then there exists a homeomorphic extension $\tilde f:\overline{\Om}\to \Om'\cup \partial_P \Omega' $ defined as follows 
\begin{equation*}
 \tilde f (x)=
 \begin{cases}
 f(x) & \hbox{if } x \in \Om \\
 [f(E_k(x))] & \hbox{if } x \in \bd \Om,
 \end{cases}
\end{equation*}
where $[E_k(x)]$ is the canonical prime end associated with $x \in \bd \Om$.
\end{thm}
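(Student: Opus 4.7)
The plan is to verify in turn well-definedness, bijectivity, continuity, and openness of $\tilde f$; the first three reduce largely to results already in hand, while the genuine work is in boundary continuity. Well-definedness on $\bd\Om$ is immediate: for $x\in\bd\Om$ the canonical prime end $[E_k(x)]$ is produced by Observation~\ref{obs-coll-spe}, Theorem~\ref{qc-def-mod} shows that $\{f(E_k(x))\}$ remains a prime chain in $\Om'$, and Theorem~\ref{thm-bij} identifies $[E_k(x)]$ as the unique prime end of $\Om$ with impression $\{x\}$, so $\tilde f(x)$ is independent of representatives.

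For bijectivity, on $\Om$ we have $\tilde f=f$ and interior images lie in $\Om'$ while boundary images lie in $\bdyP\Om'$, so the two regimes do not interfere. Injectivity on $\bd\Om$: distinct $x,y\in\bd\Om$ give $[E_k(x)]\neq[E_k(y)]$ by Theorem~\ref{thm-bij}, and Theorem~\ref{thm-hom-1} preserves this after applying $f$. Surjectivity onto $\bdyP\Om'$: given $[F_l']\in\bdyP\Om'$, Theorem~\ref{thm-hom-1} yields $[f^{-1}(F_l')]\in\bdyP\Om$; by Theorem~\ref{thm-bij} it has singleton impression $\{x\}$ for some $x\in\bd\Om$, and uniqueness of the canonical prime end forces $[f^{-1}(F_l')]=[E_k(x)]$, so $\tilde f(x)=[F_l']$.

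Continuity at an interior point is automatic; the substantive step is continuity at $x\in\bd\Om$. Fix a basic neighborhood $V\cup V_P$ of $\tilde f(x)=[f(E_k(x))]$ in the sense of Definition~\ref{defn-topol}; clause (d) gives $D(E_k(x))\subset f^{-1}(V)$ for all large $k$. In the collaring coordinate $(U,h)$ at $x$ I would set
\[
W_k := h^{-1}\bigl(B(h(x),1/k)\cap\overline{B(0,1)}\bigr),
\]
where $h^{-1}$ uses the homeomorphic extension of $h$ to $U\cup(\bd U\cap\bd\Om)$ from Definition~\ref{def-right-collared}(b). Then $W_k\cap\Om=D(E_k(x))\subset f^{-1}(V)$. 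For $y\in W_k\cap\bd\Om$ sufficiently close to $x$, clause (b) of Definition~\ref{def-right-collared} for $x$ guarantees that $(U,h)$ is still a collaring coordinate at $y$, so the canonical chain $\{E_l(y)\}$ can be manufactured from the same $h$ via Observation~\ref{obs-coll-spe}; its cross-sets accumulate at $h(y)\in B(h(x),1/k)$, so $D(E_l(y))\subset W_k\subset f^{-1}(V)$ for $l$ large and hence $\tilde f(y)=[f(E_l(y))]\in V_P$. Consequently $\tilde f(W_k)\subset V\cup V_P$.

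Finally, openness is free under the tacit assumption that $\Om$ is bounded, since then $\overline\Om$ is compact in the complete space $(\Hei,d_{\Hei})$ and $\Om'\cup\bdyP\Om'$ is Hausdorff by the discussion after Definition~\ref{defn-topol}; any continuous bijection from a compact space to a Hausdorff space is a homeomorphism. The delicate technical point I expect to be the main obstacle is the identification of $\{W_k\}$ as a genuine neighborhood basis of $x$ in $\overline\Om$, rather than only in $U\cup(\bd U\cap\bd\Om)$; this is exactly what our definition of collardness, with its uniform piece $U$ and homeomorphic boundary extension, is designed to deliver, while the diameter collapse from Lemma~\ref{sep-cond} ensures that canonical chains at nearby boundary points eventually enter $W_k$.
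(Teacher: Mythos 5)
Your overall architecture (well-definedness, bijectivity, continuity, openness) is sound, and the bijectivity step via Theorem~\ref{thm-bij} and Theorem~\ref{thm-hom-1} matches what the paper needs. But the continuity step contains a genuine gap, and it is exactly the point you flag at the end and then wave away: the sets $W_k = h^{-1}\bigl(B(h(x),1/k)\cap\overline{B(0,1)}\bigr)$ need \emph{not} form a neighborhood basis of $x$ in $\overline{\Om}$. Definition~\ref{def-right-collared} only asks for \emph{some} uniform subset $U\subseteq\Om$ with $x\in\partial U$ and a homeomorphic boundary extension of $h$; it does not require $U$ to be a neighborhood of $x$ in $\Om$, so points of $\Om\setminus U$ and of $\partial\Om\setminus\partial U$ may accumulate at $x$, and then $\tilde f(W_k)\subset V\cup V_P$ says nothing about $\tilde f$ on a full relative neighborhood of $x$. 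The definition is not ``designed to deliver'' this; what delivers it is the separate fact that collared domains are locally connected at the boundary (Observation~\ref{lem-coll-finit}) combined with $\dist_{\Hei}(x,\overline{E_k(x)})>0$ and $\diamH(\overline{D(E_k(x))})\to 0$: a small connected piece $V'\cap\Om$ at $x$ cannot meet $E_k(x)$, does meet $D(E_k(x))$, and hence lies in $D(E_k(x))\subset f^{-1}(V)$, and similarly nearby boundary points $y\notin\partial U$ have $D(E_l(y))$ eventually inside $D(E_k(x))$. Without an argument of this kind your continuity proof only establishes continuity of the restriction of $\tilde f$ to $U\cup(\partial U\cap\partial\Om)$.

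For comparison, the paper takes a different and cleaner route: it factors $\tilde f = F\circ\tilde I_\Om$, where $F$ is the prime-end homeomorphism of Theorem~\ref{thm-hom-1} and $\tilde I_\Om:\overline{\Om}\to\Om\cup\bdyP\Om$ is the extended identity, and then verifies that $\tilde I_\Om$ carries the relatively open sets $U\cup(\partial U\cap\partial\Om)$ of condition (c) in Definition~\ref{defn-topol} exactly onto the basic neighborhoods $U\cup U_P$, using collaredness to identify every $[F_l]\in U_P$ with a canonical prime end $[E_k(x)]$, $x\in\partial U\cap\partial\Om$. This computes preimages and images of basic neighborhoods directly, so openness comes for free and no compactness of $\overline{\Om}$ is invoked; your ``continuous bijection from compact to Hausdorff'' shortcut silently adds boundedness of $\Om$ to the hypotheses. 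I would recommend either adopting the paper's factorization or, if you keep the direct approach, inserting the local-connectedness argument sketched above and dropping the compactness step in favour of showing $\tilde f\bigl(U\cup(\partial U\cap\partial\Om)\bigr)=f(U)\cup f(U)_P$ explicitly.
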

\begin{proof} As a consequence of Theorem~\ref{thm-hom-1} and Theorem~\ref{thm-bij} above, we need only to check that the extension of the identity map $I_\Om:\Om \to \Om$ to a map $\tilde I_\Om: \Om \cup \partial \Om  \to \Om \cup \partial_P \Om$ where
\begin{equation*}
 \tilde I_\Om(x)=
 \begin{cases}
 x & \hbox{if } x \in \Om \\
 [E_k(x)] & \hbox{if } x \in \bd \Om,
 \end{cases}
\end{equation*}
is continuous and open. To this end we need only examine the behavior at the boundary.

Let $U \subset \Omega$ have the property that $\partial U \cap \partial \Om \ne \emptyset$ and that $U \cup (\partial U \cap \partial \Om)$ is relatively open, then for $U_P$ as in Definition~\ref{defn-topol} we have
\[
\tilde I_\Om\left(U \cup (\partial U \cap \partial \Om)\right)=U \cup U_P,
\]
since by collardness, every $[F_l] \in U_P$ satisfies  $[F_l]=[E_k(x)]$ for some $x \in   \partial U \cap \partial \Omega$. Hence $\tilde I_\Om$ is open.

Now let $U \cup U_P \subset \Om \cup \partial_P \Om$ where $U \cup (\partial U \cap \partial \Om)$ is relatively open. By collaredness, every $[F_l] \in U_P$ satisfies  $[F_l]=[E_k(x)]$ for some $x \in   \partial U \cap \partial \Omega$. Hence,
\[
U \cup U_P =\tilde I_\Om (U \cup (\partial U \cup \partial \Omega))
\]
and so $\tilde I_\Om$ is continuous.
\end{proof}

\section{Further properties of prime ends. Relations to the theory of prime ends on metric spaces}\label{sect4}

In this section we develop and discuss further properties of prime ends as defined in Section~\ref{sec-Nakkis-pe} in the setting of the Heisenberg group $\Hei$. Moreover, for domains in $\Hei$ we present relations between Nakki's prime ends and the theory of $\modq$-ends and $\modq$-prime ends as developed in~\cite{abbs}. We restrict our discussion here to $\Hei$ mainly for the sake of uniformity of the presentation. Most of the results in this section can be stated for the higher order Heisenberg groups and even for more general Carnot-Carath\'eodory groups under natural modifications of the statements below.

The following definition is due to N\"akki, see \cite{na2, na3}. N\"akki uses a term uniform domains, we call them mod-uniform domains, in order to distinguish from uniform domains as in Martio--Sarvas~\cite{MS}, see comments below.

\begin{defn}\label{def-uni-dom}
 We say that a domain $\Om \subset \Hei$ is \emph{mod-uniform} if  for every $t>0$ there is a $\epsilon>0$ such that if $\min\{\diam(E), \diam(F)\}\ge t$, then $\Mod_{4}(\Gamma(E, F, \Om))\geq \epsilon$ for any nondegenerate connected sets $E,F \subset \Om$.
\end{defn}

As observed by N\"akki, mod-uniform domains in $\overline{R^n}$ are finitely connected at the boundary, see Theorem 6.4 in \cite{na3}.  Moreover, a domain $\Om\subset \overline{R^n}$ finitely connected at the boundary is mod-uniform if and only if $\Om$ can be mapped quasiconformally onto a collared domain, see \cite[Section 6.5]{na3}. From the point of view of our discussion it is important that Theorem 6.4 in \cite{na3} easily extends to the $\Hei$ setting, and we omit the proof of this observation.

We further remark that one should not confuse Definition~\ref{def-uni-dom} with the uniform domains studied e.g. by~\cite{MS}, N\"akki--V\"ais\"al\"a~\cite{nv91} and V\"ais\"al\"a~\cite{vaisala88}, see Definition~\ref{def-unifDom} and Section~\ref{sect-loew-unif} for the importance of uniform domains in our studies. For instance, the latter uniform domains are necessarily locally connected at the boundary, see e.g. Proposition 11.2 in \cite{abbs}. In fact the following holds.

\begin{observ}\label{obs-uni-mod-uni}
 A uniform bounded domain $\Om\subset \Hei$ is mod-uniform.
\end{observ}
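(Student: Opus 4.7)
The plan is to invoke Theorem~\ref{thm-uni-Loewn}: since $(\Hei, d_{\Hei}, \lambda)$ is a locally compact $4$-regular $4$-Loewner space by Theorem~\ref{loew-heis}, a uniform subdomain $\Om$ of $\Hei$ is itself $4$-Loewner. Consequently its Loewner function satisfies $\Psi_{\Om}(s) > 0$ for every $s > 0$.

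Given $t > 0$ and nondegenerate disjoint continua $E, F \subset \Om$ with $\min\{\diam E, \diam F\} \geq t$, I would use the boundedness of $\Om$ to observe that
\[
\dist_{\Hei}(E, F) \leq \diam(\Om) < \infty,
\]
which yields
\[
\Delta(E, F) = \frac{\dist_{\Hei}(E, F)}{\min\{\diam E, \diam F\}} \leq \frac{\diam(\Om)}{t} =: t_0.
\]
Since $\Psi_{\Om}$ is decreasing by construction, it follows that
\[
\Mod_{4}(\Gamma(E, F, \Om)) \geq \Psi_{\Om}(\Delta(E, F)) \geq \Psi_{\Om}(t_0) > 0,
\]
so $\epsilon := \Psi_{\Om}(t_0)$, which depends only on $t$ and $\diam(\Om)$, fulfills the requirement of Definition~\ref{def-uni-dom}.

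I do not anticipate a serious obstacle. The single substantive ingredient is the transfer of the Loewner property to uniform subdomains, which is already packaged into Theorem~\ref{thm-uni-Loewn}; the rest of the argument is the elementary remark that, inside a bounded set, a uniform lower bound on the diameters of $E$ and $F$ forces a uniform upper bound on $\Delta(E, F)$. The only minor issue is the mild discrepancy between ``connected sets'' in Definition~\ref{def-uni-dom} and the ``continua'' appearing in the Loewner function, but this is harmless: passing to closures preserves connectedness and nondegeneracy, and does not alter $\Mod_{4}\Gamma(E, F, \Om)$ since curves joining $E$ to $F$ are in bijection with curves joining $\overline{E}$ to $\overline{F}$ modulo endpoints.
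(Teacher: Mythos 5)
Your proposal is correct and follows essentially the same route as the paper: apply Theorem~\ref{thm-uni-Loewn} to get that $\Om$ is $4$-Loewner, bound $\Delta(E,F)$ above by $\diamH(\Om)/t$ using boundedness, and use the monotonicity of $\Psi_{\Om}$ to extract the uniform lower bound $\epsilon=\Psi_{\Om}(\diamH(\Om)/t)$. The extra remark about passing from connected sets to continua is a harmless refinement the paper leaves implicit.
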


\begin{proof}
By Theorem~\ref{thm-uni-Loewn} we get that $\Om$ is $4$-Loewner. Let then $E, F\subset \Om$ be as in \eqref{def-Loewn1} and suppose that $\min\{\diam(E), \diam(F)\}\ge t$ for some $t>0$. Then
    $$\Delta(E, F)\leq \frac{1}{t}\distH(E,F)\leq \frac{1}{t}\diamH \Om$$
    and hence by Definition~\ref{def-Loewn}, it holds that
    $$\Mod_{4}(E, F, \Om)\geq \Psi_{\Om}(\Delta(E, F))>0.$$
   Moreover, since $\Om$ is bounded and $\Psi_{\Om}$ is a nonincreasing function, we in fact obtain that there exists a uniform lower bound
 \[
  \Psi_{\Om}(\Delta(E, F))\geq \Psi_{\Om}\left(\frac{1}{t}\diamH \Om\right):=\epsilon>0
 \]
 and the proof is completed.
\end{proof}

In the discussion following Definition~\ref{def-collared-domain} we noticed that every boundary point of a collared domain is an impression of the singleton prime end, the so-called canonical prime end. In fact, the following stronger result holds, cf. Observation~\ref{lem-coll-finit}.

\begin{lem}\label{obs-finite-pe}
 If $\Om\subset \Hei$ is a domain finitely connected at the boundary, then every $x\in \bd \Om$ is the impression of a prime end.
\end{lem}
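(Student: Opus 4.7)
The plan is to build a prime chain $\{E_k\}$ whose impression is exactly $\{x\}$, in three steps.

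First, I use the finite connectedness at $x$: for each $k$ choose a bounded open $V_k \subset \Hei$ with $x \in V_k \subset B(x,1/k)$ such that $V_k \cap \Om$ has only finitely many components, and by shrinking I may assume $V_{k+1}\subset V_k$. Let $\mathcal{C}_k$ be the nonempty finite family of components of $V_k\cap\Om$ whose closure contains $x$ (non-emptiness follows because $x\in\bd\Om\cap V_k$). The inclusion $V_{k+1}\cap\Om\subset V_k\cap\Om$ induces a parent map $\mathcal{C}_{k+1}\to\mathcal{C}_k$, giving a finitely branching infinite rooted tree. K\"onig's lemma then produces a nested sequence $C_1\supset C_2\supset\cdots$ of connected open sets with $x\in\overline{C_k}$ for every $k$ and $\diamH(C_k)\to 0$.

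Second, I convert the $C_k$ into N\"akki cross-sets in the sense of Definition~\ref{def-cross-set}. One option is to take $E_k$ to be a suitably chosen connected component of the relative boundary $\bd C_k\cap\Om$ and appeal to a topological argument to guarantee that $E_k$ separates $\Om$ into two components whose boundaries meet $\bd\Om$. A cleaner route, indicated by the opening remarks of this section, is to recognise $\{C_k\}$ as an acceptable-set chain in the $\modq$-prime-end framework of \cite{abbs}, obtain there a prime end with singleton impression $\{x\}$, and then transport back to N\"akki's definition using the equivalence between the two notions developed in this section. Either way one produces cross-sets $E_k$ with $E_k\subset\overline{C_k}$, $x\in\overline{E_k}\cap\bd\Om$, and $\distH(E_k,E_{k+1})>0$.

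Third, I verify the prime chain conditions and compute the impression. For (a) in Definition~\ref{def-pr-chain}, since $\distH(E_k,E_{k+1})>0$ the standard upper bound for $\Mod_4$ on curve families between sets at positive distance yields $\Mod_4(\Gamma(E_{k+1},E_k,\Om))<\infty$. For (b), given any continuum $F\subset\Om$ and $k$ large enough that $F\cap V_k=\emptyset$, one has $\Delta(E_k,F)\to\infty$ as $\diamH(E_k)\to 0$, and the $4$-Loewner property of $\Hei$ (Theorem~\ref{loew-heis}) combined with an annular modulus estimate forces $\Mod_4(\Gamma(E_k,F,\Om))\to 0$. Lemma~\ref{lem-imp-bd} then locates the impression inside $\bd\Om$, while $E_k\subset\overline{C_k}$, $\diamH(C_k)\to 0$, and $x\in\overline{C_k}$ pin it down to $\{x\}$. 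The main obstacle is Step~2: in $\Hei$, without the collaring coordinates or spherical symmetry available in the Euclidean setting, extracting a genuinely separating connected cross-set from an abstract approach component $C_k$ is topologically delicate, which is why the $\modq$-prime-ends detour through \cite{abbs} is the most satisfactory resolution.
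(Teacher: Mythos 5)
Your Step 1 is essentially the paper's: the authors also extract a nested sequence of components $F^{x_0}_k=G_{j_k}(r_k)\subset B(x_0,r_k)\cap\Om$ with $x_0\in\overline{F^{x_0}_k}$ and $\diamH(F^{x_0}_k)\to 0$, via Lemma~\ref{lem10-5} and a rooted tree of components with a compactness argument on descending paths --- which is exactly a K\"onig-type extraction. Your Step 3 is also close to the paper's verification, except that the upper bound $\Mod_4(E_k,F,\Om)\to 0$ does not come from the Loewner property (which only gives \emph{lower} bounds on modulus and is used in Lemma~\ref{sep-cond} for the converse implication in uniform domains); the paper gets the decay purely from the annular estimate \eqref{obs-est1} for curves forced to approach the fixed point $x_0$.

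The genuine gap is your Step 2, and you have correctly identified it as the obstacle but not resolved it. The paper's resolution is an explicit construction you do not hit upon: it sets
\[
E_k:=\bigl(\overline{F^{x_0}_{2k-1}}\cap\Om\bigr)\setminus\bigl(\overline{F^{x_0}_{2k}}\cap\Om\bigr),
\]
i.e.\ the ``shell'' between two consecutive nested components, and asserts that these are connected, relatively closed cross-sets whose complements have exactly two components, with $\distH(E_k,E_{k+1})>0$ coming from the strictly decreasing radii $r_k$. Neither of your two proposed alternatives closes the gap. Taking a connected component of $\bd C_k\cap\Om$ is left as ``a topological argument'' that you do not supply, and it is not clear such a component separates $\Om$ into two pieces whose boundaries meet $\bd\Om$. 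The detour through \cite{abbs} is not available either: the comparison result in Section~\ref{sect-nakki-vs-pe} of this paper only converts N\"akki prime chains into $\modq$-chains, and only for \emph{collared} domains; there is no converse transporting a $\modq$-prime end of \cite{abbs} back into a N\"akki prime end in a general domain finitely connected at the boundary, so invoking ``the equivalence between the two notions'' assumes a theorem that is not proved here (and whose acceptable sets are not cross-sets in the sense of Definition~\ref{def-cross-set}). Without an actual construction of the separating cross-sets, the proof is incomplete at its central step.
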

The proof of this observation is based on the following topological result.
\begin{lem}[Lemma 10.5 in \cite{abbs}]\label{lem10-5}
 Assume that $\Om$ is finitely connected at $x_0\in\bd \Om$. Let $A_k\subsetneq\Om$ be such that:
\begin{enumerate}
\item $A_{k+1}\subset A_k$,
\item $x_0\in \overline{A_k}$,
\item $\distH(x_0,\Om\cap\bd A_k)>0$ for each $k=1,2,\ldots$.
\end{enumerate}
Furthermore, let $0<r_k<\distH(x_0,\Om\cap\bd A_k)$ be a sequence decreasing to zero.

Then for each $k=1,2,\ldots$ there is a component $G_{j_k}(r_k)$ of $B(x_0, r_k) \cap \Om$ intersecting $A_k$ for each $k=1,2,\ldots$, and such that $x_0\in \overline{G_{j_k}(r_k)}$ and $G_{j_k}(r_k) \subset A_k$.
\end{lem}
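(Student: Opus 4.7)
The plan is to exploit the three ingredients separately: finite connectedness at $x_0$ gives only finitely many candidate components near $x_0$, the distance condition $r_k<\distH(x_0,\Om\cap\bd A_k)$ forces a rigid dichotomy ``inside $A_k$ or outside $\overline{A_k}$'' on each such component, and the hypothesis $x_0\in\overline{A_k}$ breaks the dichotomy on at least one of them in favor of the ``inside'' alternative.

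First I would fix $k$ and apply finite connectedness at $x_0$ with radius $r_k$: there exists an open set $V_k$ with $x_0\in V_k\subset B(x_0,r_k)$ such that $V_k\cap\Om$ has only finitely many components $C_1,\ldots,C_{n_k}$. Each $C_i$ is connected and contained in $B(x_0,r_k)\cap\Om$, hence sits inside a unique component of $B(x_0,r_k)\cap\Om$, call it $G^{(i)}$. Next, since $x_0\in\overline{A_k}$ and $A_k\subset\Om$, choose a sequence $y_m\in A_k$ with $y_m\to x_0$; for $m$ large, $y_m\in V_k\cap\Om\cap A_k$. By pigeonhole some $C_{i_0}$ contains a subsequence of $\{y_m\}$, so $x_0\in\overline{C_{i_0}}\subset\overline{G^{(i_0)}}$. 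Set $G_{j_k}(r_k):=G^{(i_0)}$; the required conditions $x_0\in\overline{G_{j_k}(r_k)}$ and $G_{j_k}(r_k)\cap A_k\neq\emptyset$ are then immediate.

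It remains to see $G_{j_k}(r_k)\subset A_k$, and this is where the distance hypothesis enters. Since $G_{j_k}(r_k)\subset B(x_0,r_k)\cap\Om$ and $r_k<\distH(x_0,\Om\cap\bd A_k)$, we have $G_{j_k}(r_k)\cap\bd A_k=\emptyset$. Using the partition $\Hei=A_k^\circ\sqcup\bd A_k\sqcup(\Hei\setminus\overline{A_k})$, I can write
\[
G_{j_k}(r_k)=\bigl(G_{j_k}(r_k)\cap A_k^\circ\bigr)\sqcup\bigl(G_{j_k}(r_k)\cap(\Hei\setminus\overline{A_k})\bigr),
\]
a disjoint union of two open subsets. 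Since $y_m\in G_{j_k}(r_k)\cap A_k$ and $y_m\notin\bd A_k$ forces $y_m\in A_k^\circ$, the first piece is nonempty, so by connectedness the second piece is empty. Thus $G_{j_k}(r_k)\subset A_k^\circ\subset A_k$.

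The main technical point is the clean dichotomy in the last step, which relies critically on the choice $r_k<\distH(x_0,\Om\cap\bd A_k)$; without this, a connected subset of $B(x_0,r_k)\cap\Om$ could straddle $\bd A_k$ and one could not conclude $G_{j_k}(r_k)\subset A_k$ from a single witness point in $A_k$. The other delicate issue is that no a priori openness of $A_k$ is assumed, which is why the argument must carefully distinguish $A_k$, $A_k^\circ$, and $\overline{A_k}$, and use that the absence of $\bd A_k$ lets the point $y_m\in A_k$ be promoted to $y_m\in A_k^\circ$ at no cost. Finite connectedness is essentially a convenience used only to apply pigeonhole and pick a single component of $V_k\cap\Om$ that accumulates at $x_0$; the rest of the argument is purely point-set topological.
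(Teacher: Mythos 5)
Your proof is correct. Note that the paper itself does not prove this statement --- it is imported verbatim as Lemma 10.5 of \cite{abbs} --- so there is no in-paper argument to compare against; your self-contained proof (finite connectedness plus pigeonhole to locate a component of $V_k\cap\Om$ accumulating at $x_0$ and meeting $A_k$, then the interior/exterior dichotomy forced by $G_{j_k}(r_k)\cap\bd A_k=\emptyset$ and connectedness) is exactly the expected route and matches the argument in \cite{abbs}. The only implicit point worth flagging is that the connectedness step uses that components of open subsets of $\Hei$ are open, which holds since $\Hei$ is locally connected as a topological space.
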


\begin{proof}[Proof of Lemma~\ref{obs-finite-pe}]
Following the notation of Lemma~\ref{lem10-5}, we let $x_0\in \bd \Om$ and set $A_k=\Om\setminus\{x\}$ for some $x\in \Om$ and all $k=1,\ldots$. First, we construct a nested sequence of connected sets
\[
 F^{x_0}_k=G_{j_k}(r_k)\subset B(x_0, r_k) \cap \Om
\]
 with $\diam_{\,\Hei}(F_k^{x_0})\to 0$ as $k\to \infty$. The idea of such construction is based on the proof of Lemma 10.6 in \cite{abbs} and, therefore, we present only a sketch of the reasoning.

  Let us consider the rooted tree with vertices $G_j(r_k)$, $j=1,2,\ldots,N(r_k)$, $k=1,2,\ldots$, where two vertices are connected by an edge provided that they are $G_j(r_k)$ and $G_i(r_{k+1})$ for some $i$, $j$ and $k$
with $G_i(r_{k+1}) \subset G_j(r_k)$. Denote by $\mathcal{P}$ the collection of all descending paths starting from the root and define a metric function measuring the distance between branches of the tree. Namely, let $t(p,q)=2^{-n}$, where $n$ is the level where paths $p$ and $q$ branch (or end), i.e. $n$ is the largest integer such that $p$ and $q$
have a common vertex $G_j(r_n)$. For each $k=1,2,\ldots$, we consider the subcollection $\mathcal{P}_k$ consisting of all paths $p\in\mathcal{P}$ for which there exists a component $G_j(r_{k})\subset A_k$ such that $p$ passes through the vertex $G_j(r_{k})$. By Lemma~\ref{lem10-5} all $\mathcal{P}_k$ are nonempty, $\mathcal{P}_{k+1}\subset \mathcal{P}_k$ for $k=1,\ldots$ and each $\mathcal{P}_k$ is complete in $t$. Since $\mathcal{P}$ is totally bounded in $t$, we get that all $\mathcal{P}_k$ are compact. In a consequence, we obtain an infinite path $q \in \bigcap_{k=1}^\infty \mathcal{P}_k$. The vertices through which it passes define the sequence of sets $\{F^{x_0}_k\}_{k=1}^{\infty}$ such that $F^{x_0}_k=G_{j_k}(r_k)$, $k=1,2,\ldots$. Moreover,
\[
 \diamH F^{x_0}_k \le \diamH (B(x_0, r_k) \cap \Om) \le 2r_k \to 0\quad\hbox{as}\quad k \to \infty.
\]

 Next, we use sets $F^{x_0}_k$ to define a prime chain $[E_k]$ with impression $I[E_k]=\{x_0\}$. Define
 \begin{equation}\label{chain-obs-finit-pe}
  E_k:=\left(\overline{F^{x_0}_{2k-1}}\cap \Om\right)\setminus \left(\overline{F^{x_0}_{2k}}\cap \Om\right),\quad k=1,\ldots.
 \end{equation}
 Then, $E_k$ are connected, relatively closed in $\Om$ for all $k$, also $\overline{E_k}\cap \bd \Om\not=\emptyset$ and $\Om\setminus E_k$ has exactly two components. By construction we get that $E_k$ separates $E_{k-1}$ and $E_{k+1}$. Furthermore, since $\dist_{\,\Hei}(E_k, E_{k+1})>0$ it holds that $\Mod_4(E_k, E_{k+1}, \Om)<\infty$.

  Finally, let $K\subset \Om$ be a continuum. Note that
  \[
   E_k\subset F^{x_0}_{2k-1}\subset B(x_0, r_k) \cap \Om \quad\hbox{ and }\quad \lim_{k \to \infty}\distH(B(x_0, r_k) \cap \Om, \{x_0\})=0.
  \]
   Therefore, $\lim_{k\to \infty}\Mod_4(E_k, K, \Om)=0$, as the family of curves passing through the fixed point has zero $p$-modulus for $1\le p\le Q=4$, cf. \eqref{obs-est1} below, for the similar argument. Thus, $[E_k]$ is a prime chain and defines a prime end.
\end{proof}

\begin{lem}\label{obs-finite-onto}
 Let $\Om_0\subset \Hei$ be a domain locally connected at the boundary and let $f$ be a quasiconformal mapping from $\Om_0$ onto a domain $\Om\subset \Hei$. Then, there exists a map $F:\bdyP \Om_0\to \bdyP \Om$, i.e. an image of a prime end in $\Om_0$ is a prime end in $\Om$.
\end{lem}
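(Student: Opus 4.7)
The plan is to define $F:\bdyP\Om_0\to\bdyP\Om$ by the obvious formula $F([E_k]):=[f(E_k)]$, and then verify in turn that (i) for each prime chain $\{E_k\}$ in $\Om_0$, the sequence $\{f(E_k)\}$ is a prime chain in $\Om$, and (ii) the prime-end class $[f(E_k)]$ depends only on $[E_k]$. Given the structure of Definitions~\ref{def-cross-set}--\ref{def-pr-chain}, I would split (i) into a topological part (the cross-set and separation properties) and an analytic part (the two modulus conditions).

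For the topological part I would exploit that $f$ is a homeomorphism of $\Om_0$ onto $\Om$. Each $f(E_k)$ is connected and relatively closed in $\Om$, and since $f$ carries components of $\Om_0\setminus E_k$ bijectively onto components of $\Om\setminus f(E_k)$, the set $\Om\setminus f(E_k)$ has exactly two components, one of them being $f(D(E_k))$. The chain condition that $f(E_k)$ separates $f(E_{k-1})$ from $f(E_{k+1})$ inside $\Om$ follows in the same way. The delicate point is verifying that $\overline{f(E_k)}\cap\bd\Om\neq\emptyset$ and that the boundary of each complementary component meets $\bd\Om$. I would argue this by picking $x_n\in E_k$ with $x_n\to x\in\overline{E_k}\cap\bd\Om_0$ (which exists by the cross-set assumption on $E_k$) and observing that any subsequential limit $y$ of $\{f(x_n)\}$ in $\Hei$ must lie in $\bd\Om$: indeed, if $y\in\Om$, then continuity of $f^{-1}$ at $y$ would force $x_n=f^{-1}(f(x_n))\to f^{-1}(y)\in\Om_0$, contradicting $x_n\to x\in\bd\Om_0$. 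For bounded $\Om$ a convergent subsequence exists automatically; for unbounded $\Om$ one works in the one-point compactification, treating $\infty$ as a boundary point. The analogous argument applied to sequences in $D(E_k)$ shows that $\bd f(D(E_k))\cap\bd\Om\neq\emptyset$.

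The analytic part follows at once from the quasiconformal invariance of the $4$-modulus in Theorem~\ref{qc-def-mod}: for condition (a) of Definition~\ref{def-pr-chain},
\[
\Mod_4(f(E_{k+1}),f(E_k),\Om)\le K^2\,\Mod_4(E_{k+1},E_k,\Om_0)<\infty,
\]
and for (b), given any continuum $F\subset\Om$, the preimage $f^{-1}(F)$ is a continuum in $\Om_0$, so
\[
\Mod_4(f(E_k),F,\Om)\le K^2\,\Mod_4(E_k,f^{-1}(F),\Om_0)\longrightarrow 0.
\]
Well-definedness of $F$ on equivalence classes is then a formal consequence of $f$ being a homeomorphism: if $\{E_k\}\sim\{E_k'\}$ define the same prime end in $\Om_0$, then each $D(E_k)$ contains all but finitely many $E_l'$, hence $f(D(E_k))=D(f(E_k))$ contains all but finitely many $f(E_l')$, and symmetrically; thus $[f(E_k)]=[f(E_k')]$.

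The step I expect to be the main obstacle is the boundary-contact verification. The topological properness argument above is clean, but one must be vigilant about unbounded $\Om$ and about the possibility that a prime chain in $\Om_0$ accumulates at several boundary points simultaneously. This is where the hypothesis that $\Om_0$ is locally connected at $\bd\Om_0$ enters: it supplies, at each $x\in\bd\Om_0$, a neighborhood basis of sets $V$ with $V\cap\Om_0$ connected, which permits one, when necessary, to replace a given prime chain by an equivalent representative whose cross-sets sit inside such connected approach regions. Once this step is in place, all remaining verifications reduce to the bookkeeping outlined in the previous paragraphs.
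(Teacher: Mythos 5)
Your argument is correct, but it takes a genuinely different route from the paper's. You push forward an \emph{arbitrary} prime chain $\{E_k\}$ and verify directly that $\{f(E_k)\}$ is again a prime chain (cross-set and separation properties via the homeomorphism, boundary contact via the properness argument, the two modulus conditions of Definition~\ref{def-pr-chain} via Theorem~\ref{qc-def-mod}), plus well-definedness on equivalence classes. This is in substance a re-derivation of the relevant part of Theorem~\ref{thm-hom-1}, which already establishes that $[E_k]\mapsto[f(E_k)]$ is a well-defined bijection between the prime end boundaries of arbitrary domains; so the statement as literally phrased follows almost at once from that theorem, and your verification is sound. The paper instead uses the local connectivity hypothesis up front: for each $x\in\bd\Om_0$ it manufactures a specific canonical prime chain $\{E^x_k\}$ via Lemma~\ref{lem10-5} and the construction \eqref{chain-obs-finit-pe} from Lemma~\ref{obs-finite-pe} (applied with $F^{x_0}_k=B(x_0,1/k)\cap\Om_0$), defines $F(x)=[f(E^x_k)]$ on $\bd\Om_0$, and only then runs the same push-forward bookkeeping, restricted to these particular chains. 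What the paper's route buys is a map anchored at boundary points of $\Om_0$, which is what is actually invoked later (e.g.\ in the well-definedness of the arcwise extension before the Tsuji theorem); what your route buys is economy and the observation that local connectivity is not needed for the literal statement. One caveat: your closing paragraph, which claims that local connectivity is where one replaces a chain by a representative sitting inside connected approach regions, is not in fact used anywhere in your argument and is left vague; either drop it, or, if you want the boundary-point parametrization the paper is really after, replace it by the explicit canonical-chain construction of Lemma~\ref{obs-finite-pe}.
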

\begin{proof}
 Since $\Om$ is $1$-connected at the boundary, it is in particular finitely connected at the boundary and, hence, Lemmas~\ref{lem10-5} and \ref{obs-finite-pe} can be applied with sets
 \[
  F^{x_0}_{k}:=B(x_0, 1/k) \cap \Om_{0}\quad\hbox{for}\quad k=1,\ldots
 \]
   and any $x_0\in \bd \Om_0$. As in the proof of Lemma~\ref{obs-finite-pe} we construct a prime end $[E^x_k]$ in $\Om_0$ following \eqref{chain-obs-finit-pe}. Define $F:\bd \Om_0\to \bdyP \Om$ as follows:
 \[
  F(x)=[f(E^x_k)],\quad \hbox{for } x\in \bd \Om_0.
 \]
 The proof of the observation will be completed once we show that $\{f(E^x_k)\}_{k=1}^{\infty}$ defines a prime chain (end) in $\Om=f(\Om_0)$ with singleton impression $I[f(E^x_k)]:=\{y\}\subset \bd \Om$. Indeed, since $f$ is a homeomorphism, it holds that $f(E^x_k)$ are cross-sets in $\Om$ for all $k$. In particular, since for all $k$ cross-sets $E^x_k$ divide $\Om_0$ into exactly two domains, then so do $f(E^x_k)$ for all $k$. Similarly, by Topology
 we have that if $E_{k+1}$ separates $E_k$ and $E_{k+2}$, then the same holds for their images under homeomorphism $f$.
 Next, if $\dist_{\,\Hei}(E^x_k, E^x_{k+1})>0$, then by the injectivity of $f$ we have that $\dist_{\,\Hei}(f(E^x_k), f(E^x_{k+1}))>0$ for all $k$. Since $f$ is quasiconformal we infer from $\modq(E^x_k, E^x_{k+1}, \Om_0)<\infty$ that $\modq(f(E^x_k), f(E^x_{k+1}), \Om)<\infty$.

 Finally, since $[E^x_k]$ is a prime end in $\Om_0$, we have that for any continuum $K\subset \Om_0$
 \[
   0\leq \lim_{k\to \infty} \modq(f(E^x_k), f(K), \Om)\leq K\, \lim_{k\to \infty} \modq(E^x_k, K, \Om_0)=0
 \]
 by quasiconformality of $f$.  Note that every continuum $K'\subset \Om$ is an image under $f$ of some continuum in $K\subset\Om_{0}$, as we can set $K:=f^{-1}(K')$. This argument, together with Lemma~\ref{lem-imp-bd} imply that $I[fE_n]\subset \partial \Om$. The proof of Lemma~\ref{obs-finite-onto} is therefore completed.
 \end{proof}

\subsection{N\"akki's prime ends and prime ends on metric spaces}\label{sect-nakki-vs-pe}

In this section we compare a variant of N\"akki's theory of prime ends introduced in previous sections to a theory of prime ends developed for a general metric measure spaces in~\cite{abbs}. First, we recall building blocks of that theory.

Let $\Om\subset X$ be a domain in a complete metric measure space with a doubling measure, supporting the $(1,p)$-Poincar\'e inequality for $1<p<\infty$. For the importance of such assumptions we refer to \cite{abbs}. Here, we only note that those conditions hold for the Heisenberg groups $\mathbb{H}_n$ and more general Carnot--Carath\'eodory groups, see e.g. Section 11 in Haj\l asz--Koskela~\cite{HajKosk}.

\begin{defn}\label{def-accset}
 We say that a bounded connected set $E\subsetneq\Omega$ is an \emph{acceptable} set if $\overline{E}\cap \partial \Omega$ is nonempty.
\end{defn}

Since an acceptable set $E$ is bounded and connected it holds that $\overline{E}$ is compact and connected. Moreover, $E$ is infinite, as otherwise we would have $\overline{E}=E \subset \Om$. Therefore, $\overline{E}$ is a continuum.

\begin{defn}\label{def-chain}
A sequence $\{E_k\}_{k=1}^\infty$ of acceptable sets is a \emph{chain} if
\begin{enumerate}
\item \label{it-subset}
$E_{k+1}\subset E_k$ for all $k=1,2,\ldots$
\item \label{pos-dist}
$\distH(\Omega\cap\bd E_{k+1},\Omega\cap \bd E_k )>0$ for all $k=1,2,\ldots$
\item \label{impr}
The \emph{impression} $\bigcap_{k=1}^\infty \overline{E}_k \subset \bd\Om$.
\end{enumerate}
\end{defn}

We further comment that a variant of this definition can be considered as well with the Heisenberg distance in condition \eqref{pos-dist} substituted with the Mazurkiewicz distance, see Definition 2.3 in Estep--Shanmugalingam~\cite{es}.

\begin{defn}\label{def-my-ends}
Similarly to the setting of N\"akki's prime chains we define the \emph{division} of chains and say that
two chains are \emph{equivalent} if they divide each other. A collection of mutually equivalent chains is called an \emph{end} and denoted $[E_k]$, where $\{E_k\}_{k=1}^\infty$ is any of the chains in the equivalence class. An end $[E_k]$ is called a \emph{prime end} if any other end dividing it must be equivalent to it, i.e. if $[E_k]$ is not divisible by any other end.
\end{defn}

For further definitions and properties of prime ends as in Definition~\ref{def-my-ends} we refer to Sections 3-5 and 7 of \cite{abbs}. Among topics studied in \cite{abbs} are also notions of $\modp$-ends and $\modp$-prime ends for $1\leq p<\infty$, see Section 6 in \cite{abbs}. However, here we confine our discussion to the setting of $p=Q$ only with $Q=4$, the Ahlfors dimension of $\Hei$.
\begin{defn}
A chain $\{E_k\}_{k=1}^\infty$ is called a \emph{$\Mod_4$-chain} if
 \begin{equation}\label{def-modq-end-cond}
    \lim_{k\to \infty} \modq(E_k, K, \Omega)=0
 \end{equation}
for every compact set $K\subset \Omega$.
\end{defn}

In fact, Lemma A.11 in \cite{abbs} allows us to require \eqref{def-modq-end-cond} to hold only for some compact set $K_0$ with the Sobolev capacity $C_4(K_0)>0$.

\begin{defn}\label{def-my-pe}
An end $[E_k]$ is a \emph{$\Mod_4$-end} if it contains a $\Mod_4$-chain representing it. A $\Mod_4$-end $[E_k]$ is a \emph{$\Mod_4$-prime end} if the only $\Mod_4$-end dividing it is $[E_k]$ itself.
\end{defn}

\begin{rem}\label{rmk-interior}
Similarly to the prime chains studied in Section~\ref{sec-Nakkis-pe} it holds that the impression is either a point or a continuum, as $\{\overline{E}_k\}_{k=1}^\infty$ is a decreasing sequence of continua. Furthermore, Properties \ref{it-subset} and \ref{pos-dist} of Definition~\ref{def-chain} imply that $E_{k+1}\subset {\rm int} E_{k}$. In particular, ${\rm int} E_{k} \ne \emptyset$.
\end{rem}

\begin{thm}
 Let $\Om\subset \Hei$ be a collared domain. Then, a prime chain defines a $\modq$-chain as in Definition~\ref{def-my-pe} while a prime end defines a $\modq$-end according to Definition~\ref{def-my-pe}. Moreover, a prime end defines a $\modq$-prime end according to Definition~\ref{def-my-pe}.
\end{thm}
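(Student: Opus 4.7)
My plan is to associate to each N\"akki prime chain $\{E_k\}_{k=1}^\infty$ the sequence of domains $G_k:=D(E_k)$ from Definition~\ref{def-chain-N} and to verify in turn that (i) $\{G_k\}$ is a chain in the sense of Definition~\ref{def-chain}; (ii) it is in fact a $\modq$-chain; and (iii) the construction descends to equivalence classes and produces a $\modq$-prime end. Throughout I will exploit that $\Om$ is collared: by Theorem~\ref{thm-bij} every N\"akki prime end has a singleton impression $\{x\}\subset\bd\Om$, and $x$ admits a uniform collaring coordinate $(U_x,h_x)$ as in Definition~\ref{def-right-collared}. Passing to a tail of the chain, which does not alter the prime end, I may assume that every $E_k$ lies in $U_x$, so that the $E_k$'s and hence the $G_k$'s live in a uniform subdomain where Lemma~\ref{sep-cond} and Theorem~\ref{herr} apply.

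For (i) the sets $G_k$ are connected, bounded, and satisfy $\overline{G_k}\cap\bd\Om\not=\emptyset$ by Definition~\ref{def-cross-set}; the nesting $G_{k+1}\subset G_k$ comes from the separation property in Definition~\ref{def-chain-N} together with the connectedness of $D(E_{k+1})$. Since $\Om\cap\bd G_k\subset E_k$, the positive-distance axiom of Definition~\ref{def-chain} reduces to $\distH(E_k,E_{k+1})>0$, which is exactly Lemma~\ref{sep-cond}(a), while $\bigcap_k\overline{G_k}\subset\bd\Om$ is the content of the paragraph immediately after Lemma~\ref{lem-imp-bd}. For (ii) I fix a compact $K_0\subset\Om\setminus G_1$ of positive $4$-capacity, such as a small closed ball, which by Lemma A.11 of \cite{abbs} is enough to verify the $\modq$-chain condition. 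Every curve in $\Om$ joining $G_k$ to $K_0$ must cross $\Om\cap\bd G_k\subset E_k$, so the minorization of modulus (Part 5 of Lemma~\ref{lem-mod-prop}) gives
\[
\modq(G_k,K_0,\Om)\le\modq(E_k,K_0,\Om),
\]
and the right-hand side tends to $0$ by Definition~\ref{def-pr-chain}(b) after enlarging $K_0$ to a continuum if needed.

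For (iii), if $\{E_k\}\sim\{F_l\}$ are N\"akki-equivalent then for every $k$ one has $F_l\subset G_k$ for all $l$ large; combining this with Lemma~\ref{sep-cond}(a), the connectedness of $D(F_l)$ and the inclusion $F_{l+1}\subset G_k$ forces $D(F_l)\subset G_k$ for such $l$, and by symmetry the two chains $\{D(E_k)\}$ and $\{D(F_l)\}$ divide each other in the sense of \cite{abbs}; hence $[G_k]$ depends only on the N\"akki prime end. To show $[G_k]$ is $\modq$-prime I would assume a $\modq$-end $[H_l]$ divides it, so $H_l\subset G_{k(l)}$ eventually and $\bigcap_l\overline{H_l}\subset\bigcap_k\overline{G_k}=\{x\}$. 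Transferring through the quasiconformal collaring map $h_x$ and invoking Theorem~\ref{qc-def-mod} for modular invariance, both $\{h_x(H_l)\}$ and $\{h_x(G_k)\}$ become $\modq$-chains in the unit ball $B(0,1)\subset\Hei$ with the common singleton impression $\{h_x(x)\}$. Uniformity of $B(0,1)$ (Lemma~\ref{lem-cap-tang}), its $4$-Loewner property (Theorem~\ref{thm-uni-Loewn}), and Theorem~\ref{herr} should then force equivalence: two $\modq$-chains in $B(0,1)$ shrinking to the same boundary point must contain tails of each other, for otherwise one could extract nondegenerate sets at bounded relative distance whose modulus is forced simultaneously to $0$ and to a strictly positive Loewner lower bound. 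Pulling this equivalence back through $h_x$ yields $[H_l]\sim[G_k]$, hence primality.

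The hard part will be the final step. Parts (i) and (ii) are essentially bookkeeping once Lemma~\ref{sep-cond}, Theorem~\ref{herr}, and the minorization of $\modq$ are in place, and the well-definedness half of (iii) is a short topological argument. The genuine content is the rigidity needed to exclude a strict $\modq$-refinement of $[G_k]$: the collaring chart reduces the problem to a uniqueness statement for $\modq$-chains with prescribed singleton boundary impression in the uniform $4$-Loewner ball $B(0,1)$, and verifying that this uniqueness is preserved under pull-back by the quasiconformal map $h_x$ is where the care is required.
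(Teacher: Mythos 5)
Your parts (i) and (ii) follow the paper's proof almost verbatim: you pass to the acceptable sets $D(E_k)$, get nesting, deduce the positive-distance axiom of Definition~\ref{def-chain} from Lemma~\ref{sep-cond}(a) via $\Om\cap\bd D(E_k)\subset E_k$, locate the impression in $\bd\Om$ via Lemma~\ref{lem-imp-bd}, and verify the $\modq$-chain condition by minorization against a single continuum of positive capacity (the Lemma A.11 reduction from \cite{abbs}, which the paper also invokes). The well-definedness on equivalence classes is likewise the same. Where you genuinely diverge is the primality step, and there your route is both heavier and, as written, incomplete. The paper's argument is short: in a collared domain the impression of a N\"akki prime end is a singleton (Theorem~\ref{thm-bij}, or Observation~\ref{lem-coll-finit} together with Lemma~\ref{obs-finite-pe}), and then one simply cites Proposition 7.1 of \cite{abbs}, which says that an end with singleton impression is automatically a prime end in the sense of Definition~\ref{def-my-ends}; a fortiori no $\modq$-end can properly divide it.

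The gap in your version is the sentence ``two $\modq$-chains in $B(0,1)$ shrinking to the same boundary point must contain tails of each other, for otherwise one could extract nondegenerate sets at bounded relative distance whose modulus is forced simultaneously to $0$ and to a strictly positive Loewner lower bound.'' You never exhibit those sets, and the Loewner property is not the mechanism that makes this work: the statement is purely metric--topological and uses only the chain axioms. Concretely, if $[H_l]$ divides $[G_k]$ but $G_k\not\subset H_l$ for every $k$, then each connected $G_k$ meets both ${\rm int}\,H_l$ (because some $H_{l'}\subset G_k$) and $\Om\setminus H_l$, hence meets $\Om\cap\bd H_l$; since the compact sets $\overline{G_k}$ decrease to $\{x\}$, this forces $x\in\overline{\Om\cap\bd H_l}$ for every $l$, which makes $\distH(\Om\cap\bd H_{l+1},\Om\cap\bd H_l)=0$ and contradicts condition (2) of Definition~\ref{def-chain}. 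This is essentially the proof of \cite[Prop.\ 7.1]{abbs}. Transferring everything through the collaring chart $h_x$ and invoking Theorem~\ref{qc-def-mod} is unnecessary for this step and introduces its own unaddressed issues (e.g.\ that $h_x$ maps your $\modq$-chains to $\modq$-chains of acceptable sets in $B(0,1)$, which is not quite Theorem~\ref{qc-def-mod} alone). Either cite \cite[Prop.\ 7.1]{abbs} as the paper does, or replace your Loewner sketch by the separation argument above; as it stands the final step does not close.
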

\begin{proof}
 Let $\{E_k\}_{k=1}^{\infty}$ be a prime chain in $\Om$. Recall, that by $D(E_k):=\Om\setminus E_k$ we denote the component of $\Om$ containing $E_{k+1}$ for $k=1,\ldots$. Then $D(E_k)$ are acceptable sets as in Definition~\ref{def-accset}, cf. Lemma~\ref{lem-imp-bd}. Moreover, $D(E_{k+1})\subset D(E_k)$ for all $k$. The definition of the prime chain together with Lemma~\ref{sep-cond} give us that since $\modq(E_k, E_{k+1}, \Om)<\infty$, then $\distH(E_k, E_{k+1})>0$ and hence $\distH(\Om\cap \bd D(E_k), \Om\cap \bd D(E_{k+1}))>0$. Again by Lemma~\ref{sep-cond} and the discussion following Definition~\ref{def-pr-chain} (see Lemma~\ref{lem-imp-bd}) it holds that the impression $\bigcap_{k=1}^{\infty}\overline{D(E_k)}\subset \bd \Om$. Hence, $\{D(E_k)\}_{k=1}^{\infty}$ defines a chain as in Definition~\ref{def-chain}. The $\modq$-condition for all continua assumed in Part (b) of N\"akki's Definition~\ref{def-pr-chain} implies that $\{D(E_k)\}_{k=1}^{\infty}$ is in fact a $\modq$-chain. Finally, since $[E_k]$ is a class of equivalent prime chains, we obtain that $[D(E_k)]$ is a $\modq$-end.

  Observation~\ref{lem-coll-finit} and Lemma~\ref{obs-finite-pe} imply that in a collared domain $\Om$ the impression of prime end $[E_k]$, and thus also the impression of $[D(E_k)]$, are singletons (contained in $\bd \Om$). Otherwise, $[E_k]$ is divisible by some prime end contradicting the definition of a prime end. Finally, recall that Proposition 7.1 in \cite{abbs} stays that a singleton end is a prime end (in the sense of Definition~\ref{def-my-ends}). In a consequence, $[D(E_k)]$ is a $\modq$-prime end (as in Definition~\ref{def-my-pe}).
\end{proof}

\section{Boundary behavior of quasiconformal mappings in $\Hei$}\label{sec-boundary}

 The main purpose of this section is to employ the theory of prime ends in the studies of the boundary behavior of quasiconformal mappings in the Heisenberg group $\Hei$.  Our results extend the corresponding ones proved in N\"akki~\cite[Section 7]{na}. We show counterparts of three results from the theory of conformal and quasiconformal mappings in $\R^n$:
 \begin{itemize}
 \item the Koebe theorem on existence of arcwise limits along end-cuts (Theorem~\ref{thm-Koebe}),
 \item the Lindel\"of theorem on relation between asymptotic values of a map and sets of principal points for prime ends (Theorem~\ref{thm-subs}),
 \item the Tsuji theorem on the Sobolev capacities of sets of arcwise limits (Theorem~\ref{tsuji-thm}).
 \end{itemize}

  These results require some definitions and auxiliary results, which we now present.

 Recall that if $\{E_k\}_{k=1}^{\infty}$ is a chain of cross-sets in $\Om$, then by $D(E_k)$ we denote the component of $\Om\setminus E_k$ containing $E_{k+1}$ (cf. Definition~\ref{def-chain}).

\begin{defn}\label{def-access-pt}
 A point $x\in\bd\Om$ is an \emph{accessible} boundary point if there exists a curve $\gamma:[0,1]\to \Hei$ such that $\gamma(1)=x$ and $\gamma([0,1))\subset\Omega$. We call $\ga$ an \emph{end-cut of $\Om$ from $x$}.

Moreover, if $[E_k]$ is a prime end and there is a curve $\ga$ as above such that for every $k$ there is $t_k\in(0, 1)$ with $\gamma([t_k,1))\subset D(E_k)$, then $x\in\bd\Om$ is \emph{accessible through $[E_k]$}.
\end{defn}

\begin{rem} $\phantom{AAA}$
\begin{enumerate}
\item Our definition of accessible boundary point differs from the one used in N\"akki~\cite{na}. Namely, in \cite[Section 7.1]{na} accessible point $x\in \bd \Om\subset \R^n$ is defined via \emph{closed Jordan arcs (loops)} lying entirely in the given domain except for possibly one endpoint $x$. The same definition would not work in
    $\Hei$ as there are no closed Jordan arcs among horizontal curves, see pg. 1846 in Balogh--Haj\l asz--Wildrick~\cite{bhw}.

\item Note that $x\in\bd\Om$ can be accessible through $[E_k]$ only if $x$ belongs to the impression of $[E_k]$.
\end{enumerate}
\end{rem}

The following result relates connectivity of the boundary of a domain to accessibility of points.
\begin{observ}\label{obs-acc-fc}
 Let $\Om\subset \Hei$ be a domain finitely connected at the boundary. Then every $x\in \bd \Om$ is accessible and accessible through some prime end $[E_n]$.
\end{observ}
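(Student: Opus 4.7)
The strategy is to re-use the nested family $\{F^{x_0}_k\}$ constructed in the proof of Lemma~\ref{obs-finite-pe} as a scaffold for an end-cut terminating at $x_0$, and then to verify that this end-cut eventually enters every domain $D(E_k)$ of the canonical prime end built from the same family.

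First, I would fix $x_0 \in \bd \Om$ and repeat the tree argument from the proof of Lemma~\ref{obs-finite-pe}: applying Lemma~\ref{lem10-5} with $A_k = \Om \setminus \{z\}$ for some fixed $z \in \Om$ and a decreasing sequence $r_k \searrow 0$, I obtain open connected sets $F^{x_0}_k = G_{j_k}(r_k) \subset B(x_0, r_k) \cap \Om$ such that $F^{x_0}_{k+1} \subset F^{x_0}_k$, $x_0 \in \overline{F^{x_0}_k}$, and $\diamH F^{x_0}_k \leq 2r_k \to 0$ as $k \to \infty$ (strict nesting can be forced by passing to a subsequence if necessary).

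Next, since $\Hei$ is locally path-connected, each open connected set $F^{x_0}_k$ is path-connected. I would pick $y_k \in F^{x_0}_{k+1}$, join $y_k$ to $y_{k+1}$ by a continuous path inside $F^{x_0}_{k+1}$, rescale the $k$-th path to the interval $[1 - 1/k,\, 1 - 1/(k+1)]$ and concatenate, producing a continuous map $\gamma:[0,1) \to \Om$. Setting $\gamma(1) := x_0$ extends $\gamma$ continuously: by construction $\gamma(t) \in F^{x_0}_k$ whenever $t \geq 1 - 1/k$, so $\dH(\gamma(t), x_0) \leq \diamH F^{x_0}_k \to 0$ since $x_0 \in \overline{F^{x_0}_k}$. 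Hence $\gamma$ is an end-cut of $\Om$ from $x_0$, and $x_0$ is accessible.

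To see that $x_0$ is accessible through the canonical prime end $[E_k]$ of Lemma~\ref{obs-finite-pe}, where $E_k = (\overline{F^{x_0}_{2k-1}} \cap \Om) \setminus (\overline{F^{x_0}_{2k}} \cap \Om)$, I would argue that $F^{x_0}_{2k} \subset D(E_k)$. The set $F^{x_0}_{2k}$ is open, connected, disjoint from $E_k$ (because $F^{x_0}_{2k} \subset \overline{F^{x_0}_{2k}}$), and meets $E_{k+1}$ via $F^{x_0}_{2k+1} \setminus \overline{F^{x_0}_{2k+2}}$; hence, lying in a single component of $\Om \setminus E_k$, it must lie in the component containing $E_{k+1}$, namely $D(E_k)$. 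Taking $t_k := 1 - 1/(2k)$ then gives $\gamma([t_k, 1)) \subset F^{x_0}_{2k} \subset D(E_k)$, which is exactly accessibility through $[E_k]$.

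The one point requiring care, and the main potential obstacle, is the topological bookkeeping behind $F^{x_0}_{2k} \subset D(E_k)$: one must confirm that $E_k$ genuinely separates $\Om$ into the two components predicted by Definition~\ref{def-cross-set} and that $F^{x_0}_{2k+1} \setminus \overline{F^{x_0}_{2k+2}}$ is nonempty, so that $F^{x_0}_{2k}$ truly meets $E_{k+1}$ and can be identified with the inner component. Once this is settled, the remaining construction is a routine concatenation exploiting local path-connectedness of $\Hei$ together with the diameter bound $\diamH F^{x_0}_k \to 0$.
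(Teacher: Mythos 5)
Your proposal is correct and follows essentially the same route as the paper: both invoke Lemma~\ref{obs-finite-pe} to produce the prime end $[E_n]$ with $I[E_n]=\{x_0\}$ and then build the end-cut by concatenating paths through a nested sequence of path-connected sets shrinking to $x_0$. The only (harmless) difference is that the paper threads the curve through the sets $D(E_n)$ directly, so accessibility through $[E_n]$ is immediate, whereas you thread it through the $F^{x_0}_k$ and then check $F^{x_0}_{2k}\subset D(E_k)$ --- which in fact makes the continuity of $\gamma$ at $t=1$ slightly more transparent, since the diameter bound $\diamH F^{x_0}_k\to 0$ is explicit.
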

\begin{proof}
 Lemma~\ref{obs-finite-pe} allows us to assign with every $x\in \bd \Om$ a prime end, denoted $[E_n]$, with $I[E_n]=\{x\}$. Moreover, $x$ is accessible through $[E_n]$ (cf. Definition~\ref{def-access-pt}). To see this choose $x_n\in D(E_n)$ for $n=1,2,\ldots$. Since both $x_n$ and $x_{n+1}$ belong to the pathconnected set $D(E_n)$, there exists a curve $\ga_n$ connecting $x_n$ to $x_{n+1}$. Let $\ga$ denote the union of all curves $\ga_n$, with $\ga([0,1))\subset \Om$ and $\ga(1)=x$. From the proof of Lemma~\ref{obs-finite-pe} we infer that $\lim_{n\to \infty}\diamH(E_n)=0$ and so $\ga$ is continuous at $1$.

 Hence, $x$ is accessible and accessible through $[E_k]$. Moreover, $\ga$ is an end-cut of $\Om$ from $x$.
\end{proof}

 Using Definition~\ref{def-access-pt} we may provide another method to associate with every accessible boundary point a prime end.  The following result will play a particular role in the studies of cluster sets of quasiconformal mappings (cf. Lemma 7.7 in \cite{abbs}). One can consider Observation~\ref{obs-acc-pe} as a complimentary result to Lemma~\ref{obs-finite-pe}.

 \begin{observ}\label{obs-acc-pe}
  Let $\Om\subset \Hei$ and $x\in \bd \Om$ be an accessible point. Let further $r_n$ for $n=1,2,\ldots$ be a strictly decreasing sequence converging to zero as $n\to\infty$. Then there exist a sequence $t_n$ for $n=1,2,\ldots$ with $0<t_n<1$ and a prime end $[E_n]$ such that:
\begin{enumerate}
\item $I[E_n]=\{x\}$,
\item $\ga([t_n,1))\subset E_n$,
\item $E_n$ is a component of $\Om\cap B(x, r_n)$ for all $n=1,2,\ldots$.
\end{enumerate}
In particular, $x$ is accessible through $[E_n]$. Moreover, $[E_n]$ is s singleton prime end.
 \end{observ}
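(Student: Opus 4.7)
The plan is to use the end-cut witnessing the accessibility of $x$ to select, for each $n$, a canonical component of $\Om \cap B(x,r_n)$, and show these components form a prime chain collapsing to $\{x\}$. Let $\gamma : [0,1] \to \Hei$ be an end-cut with $\gamma([0,1)) \subset \Om$ and $\gamma(1) = x$. Continuity of $\gamma$ at $1$ together with $r_n \searrow 0$ yields a strictly increasing sequence $t_n \nearrow 1$ with $\gamma([t_n,1)) \subset B(x,r_n) \cap \Om$. I define $E_n$ to be the component of $\Om \cap B(x,r_n)$ containing $\gamma([t_n,1))$; this is well defined since $\gamma([t_n,1))$ is connected. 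Properties (2) and (3) of the statement hold by construction, and nesting $E_{n+1} \subset E_n$ follows because $\gamma([t_{n+1},1)) \subset \gamma([t_n,1)) \subset E_n$ forces $E_{n+1}$, a connected subset of $\Om \cap B(x,r_n)$ meeting $E_n$, to be contained in $E_n$.

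Next I would verify the prime chain conditions. A key topological observation is that because $E_n$ is a component of the open set $\Om \cap B(x,r_n)$, every boundary point of $E_n$ lying in $\Om$ must sit on the sphere $\bd B(x,r_n)$; this gives $\distH(\Om \cap \bd E_n, \Om \cap \bd E_{n+1}) \geq r_n - r_{n+1} > 0$, which via Theorem~\ref{herr} translates to finite $Q$-modulus between successive cross-sets carved out from the $E_n$ in the style of \eqref{chain-obs-finit-pe}. The impression collapses to $\{x\}$: the tails $\gamma(t)$ accumulate on $x$ inside every $\overline{E_n}$, and $\bigcap_n \overline{E_n} \subset \bigcap_n \overline{B(x,r_n)} = \{x\}$.

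For the $\modq$-decay required by part (b) of Definition~\ref{def-pr-chain}, I would argue that given any continuum $F \subset \Om$, once $r_n < \tfrac12 \distH(x,F)$ every connecting curve must traverse the annulus $A_n := B(x,R) \setminus B(x,r_n)$ with $R := \tfrac12 \distH(x,F)$. Testing the family $\Gamma(E_n,F,\Om)$ with the standard logarithmic density $\rho(y) = \chi_{A_n}(y) / [\dH(y,x) \log(R/r_n)]$ and invoking $Q$-Ahlfors regularity of $\Hei$ yields $\modq(\Gamma(E_n,F,\Om)) \leq C (\log(R/r_n))^{-(Q-1)} \to 0$. Combining this with the previous step, $\{E_n\}$ (together with the associated cross-sets built as in Lemma~\ref{obs-finite-pe}) defines a prime chain with impression $\{x\}$, hence a prime end $[E_n]$. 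Accessibility through $[E_n]$ is immediate from $\gamma([t_n,1)) \subset E_n$. Finally, the singleton impression forces $[E_n]$ to be a singleton prime end: any other end dividing it would be forced to have impression contained in $\{x\}$, hence coincide with $[E_n]$.

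The main obstacle is the modulus decay estimate: while standard in Ahlfors $Q$-regular metric measure spaces, it requires the correct logarithmic test function and a polar decomposition to extract the $(\log(R/r_n))^{-(Q-1)}$ rate from $Q$-regularity. A lesser subtlety is the topological identification of $\bd E_n \cap \Om$ with a subset of $\bd B(x,r_n)$, which although elementary must be handled carefully to secure the positive boundary separation that underpins the prime chain property.
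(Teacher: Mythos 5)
Your proposal is correct and follows essentially the same route as the paper: take the component of $\Om\cap B(x,r_n)$ containing the tail of the end-cut, use the fact that its relative boundary in $\Om$ lies on $\bd B(x,r_n)$ to get positive separation between the successive cross-sets (hence finite modulus), and use the logarithmic annulus estimate from $Q$-regularity to get the modulus decay and the singleton impression $\{x\}$. One small correction: the finiteness of the modulus between consecutive cross-sets follows from the elementary estimate for sets a positive distance apart, not from Theorem~\ref{herr}, which gives the opposite implication (infinite modulus when the closures meet) and only in uniform subdomains.
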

\begin{proof}
 Let $\ga$ be an end-cut of $\Om$ from $x$ as in Definition~\ref{def-access-pt}. It is easy to notice that the continuity of $\ga$ implies existence of a sequence $t_n\in(0, 1)$ for $n=1,2,\ldots,$ with a property that
\[
\ga([t_n,1))\subset \Om \cap B(x,r_n).
\]
For $n=1,2,\ldots$ we define $D_n$ as the component of $\Om \cap B(x,r_n)$ containing
$\ga(t_n)$ and
\[
E_n:=(\overline{D_n}\setminus D_n)\cap \Om.
\]
We show that $\{E_n\}_{n=1}^\infty$ is a prime chain and, thus, gives rise to a prime end as in Definition~\ref{def-pr-end}.

By the definition, sets $E_n$ for all $n$ are relatively closed in $\Om$ and
\[
\overline{E_n}\cap \bd \Om=\overline{(\bd D_n\cap \Om)}\cap \bd \Om\not=\emptyset.
\]
Moreover, the choice of sets $D_n$ implies that every $\Om\setminus E_n$ consists of exactly two components whose
boundaries intersect $\bd \Om$. Hence, every $E_n$ is a cross-set as in Definition~\ref{def-cross-set}.

By construction $D_{n+1}\subset D_n\subset D_{n-1}$ and, since radii $r_n$ are strictly decreasing, we obtain that $E_n$ separates $E_{n-1}$ and $E_{n+1}$ for all $n=2,\ldots$. Hence, $\{E_n\}_{n=1}^{\infty}$ fulfills conditions of a chain, cf. Definition~\ref{def-chain-N}.

Since $E_n=\Om\cap\bd D_n \subset \bd B(x, r_n)$, it follows that for all $n=1,2,\ldots,$
\[
 \distH(E_n, E_{n+1}) \geq r_n - r_{n+1} > 0.
\]
In a consequence $\Mod_{4}(E_{n+1}, E_{n}, \Om)<\infty$ for all $n$. Finally, let $F\subset \Om$ be any continuum. Then for any $n$ we have that
\[
 \Mod_{4}(E_n, F, \Om)\leq  \Mod_{4}(\bd B(x, r_n), F, \Om)\to 0\quad \hbox{as}\quad n\to\infty.
\]
Hence, $\lim_{n\to \infty} \Mod_{4}(E_n, F, \Om)=0$ and, thus, conditions (a) and (b) of Definition~\ref{def-pr-chain}
are satisfied for $\{E_n\}_{n=1}^{\infty}$ and $[E_n]$ defines a prime end in $\Om$.

 Lemma~\ref{lem-imp-bd} implies that $I[E_n]\subset \bd \Om$. Since $E_n=(\overline{D_n}\setminus D_n)\cap \Om$ for $n=1,2,\ldots$, then $\overline{E_n}\subset \overline{\Om\cap B(x, r_n)}$ for all $n$. Hence,
 \[
  \diamH\,\overline{E_n}\leq \diamH\,\overline{\Om\cap B(x, r_n)}\to 0\quad\hbox{ for } n\to \infty
 \]
 by assumptions. Thus, $I[E_n]$ is a singleton prime end. In fact $I[E_n]=\{x\}$, as $x\in \overline{D_n}$ for all $n$ completing the proof of Observation~\ref{obs-acc-pe}.
\end{proof}

Recall the following notion of cluster sets.
\begin{defn}\label{def-cluster0}
Let $\Om\subset \Hei$ be a domain, $f:\Om\to \Hei$ be a mapping and $x \in \bd \Om$. We define \emph{the cluster set of $f$ at $x$} as follows:
\begin{equation}
C(f, x):=\bigcap_{U} \overline{f(U\cap \Om)}, \label{clust}
\end{equation}
where the intersection ranges over all neighborhoods of $x$ in $\Hei$.
\end{defn}

Cluster sets can be further generalized to capture the behavior of a mapping along a curve in a more subtle way.

\begin{defn}\label{def-cluster}
 Let $\Om\subset \Hei$ be a domain, $f:\Om\to \Hei$ be a mapping and $x\in \bd \Om$.  We say that a sequence of points $\{x_n\}_{n=1}^{\infty}$ in $\Om$ \emph{converges along an end-cut $\ga$ at $x$} if there exists a sequence $\{t_n\}_{n=1}^{\infty}$ with $0<t_n<1$ and $\lim_{n\to\infty} t_n=1$ such that $x_n=\ga(t_n)$ and
 \[
 \lim_{n\to \infty}\dH(x_n, x)=0.
 \]
 We say that a point $x'\in \Hei$ belongs to \emph{the cluster set of $f$ at $x$ along an end-cut $\ga$ from $x$}, denoted by $C_{\ga}(f, x)$, if there exists a sequence of points $\{x_n\}_{n=1}^{\infty}$ converging along an end-cut $\ga$ at $x$, such that
 \begin{equation}
 \lim_{n\to\infty} \dH(f(x_n), x')=0.
 \end{equation}

 If $C_{\ga}(f, x)=\{y\}$, then $y$ is called an \emph{arcwise limit(asymptotic value)} of $f$ at $x$.
\end{defn}
In other words, $y$ is an asymptotic value of $f$ at $x\in\Om$, if there exists a curve $\ga:[0, 1)\to\Om$ such that
$\ga(t)\to x$ and $f(\ga(t))\to y$ for $t\to 1$.
\medskip

{\bf The Koebe theorem}
\medskip

In 1915 Koebe~\cite{ko} proved that a conformal mapping between a simply-connected planar domain $\Om$ onto the unit disc has arcwise limits along all end-cuts of $\Om$. The following result extends Koebe's theorem and Theorem 7.2 in N\"akki~\cite{na} to the setting of quasiconformal mappings in $\Hei$. Moreover, we study more general end-cuts than in \cite{na}.

\begin{thm}[The Koebe theorem in $\Hei$]\label{thm-Koebe}
 Let $f:\Om\to \Om_0$ be a quasiconformal map between a domain $\Om\subset \Hei$ finitely connected at the boundary and a mod-uniform domain $\Om_0\subset \Hei$. Then $f$ has arcwise limits allong all end-cuts of $\Om$.
\end{thm}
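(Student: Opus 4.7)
The plan is to argue by contradiction. I first assume that the end-cut $\ga:[0,1]\to \overline{\Om}$, with $\ga(1)=x\in\bd\Om$ and $\ga([0,1))\subset\Om$, is such that $f\circ\ga$ has no limit as $t\to 1^{-}$. Then the cluster set $C_{\ga}(f,x)$ contains at least two distinct points, so I can extract sequences $s_n<t_n\to 1$ and a constant $\delta>0$ with $\dH(f(\ga(s_n)),f(\ga(t_n)))\geq \delta$. Setting $\gamma_n:=\ga([s_n,t_n])$ produces a sequence of continua in $\Om$ whose images $f(\gamma_n)$ are nondegenerate continua in $\Om_0$ with $\diamH f(\gamma_n)\ge \delta$ for all $n$.

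Since $\ga$ is an end-cut, $x$ is accessible, so Observation~\ref{obs-acc-pe} (applied with any sequence $r_k\downarrow 0$) supplies a singleton prime end $[E_k]$ of $\Om$ with $I[E_k]=\{x\}$ together with an increasing sequence $\tau_k\nearrow 1$ such that $\ga([\tau_k,1))\subset D(E_k)$, where $D(E_k)$ denotes the component of $\Om\cap B(x,r_k)$ traversed by $\ga$. I next fix an auxiliary continuum $F\subset\Om_0$ with $\diamH F\ge t_0>0$ (e.g.\ a small closed metric ball) and set $F':=f^{-1}(F)$. As the homeomorphic image of a compactum, $F'$ is compact in $\Om$, so $\distH(F',\bd\Om)>0$; since the sets $D(E_k)$ shrink to $\{x\}$, it follows that $F'\cap D(E_k)=\emptyset$ for all $k$ large enough.

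For such a $k$ and for every $n$ with $s_n\ge\tau_k$ one has $\gamma_n\subset D(E_k)$. Every curve in $\Om$ joining $\gamma_n$ to $F'$ must exit $D(E_k)$ through $E_k$, and hence contains a subcurve belonging to $\Gamma(E_k,F',\Om)$. The minorant property of the modulus (Part 5 of Lemma~\ref{lem-mod-prop}) then yields
\[
\Mod_4(\gamma_n, F', \Om)\;\le\;\Mod_4(E_k, F', \Om),
\]
and part (b) of Definition~\ref{def-pr-chain} forces the right-hand side to tend to $0$ as $k\to\infty$. Choosing first $k$ large and then $n$ large enough that $s_n\ge\tau_k$, this proves $\Mod_4(\gamma_n,F',\Om)\to 0$; Theorem~\ref{qc-def-mod} then upgrades it to
\[
\Mod_4(f(\gamma_n),F,\Om_0)\;\le\;K^2\,\Mod_4(\gamma_n,F',\Om)\;\longrightarrow\;0.
\]
On the other hand, $f(\gamma_n)$ and $F$ are both nondegenerate continua in $\Om_0$ with $\min\{\diamH f(\gamma_n),\diamH F\}\ge \min\{\delta,t_0\}>0$, so the mod-uniformity of $\Om_0$ (Definition~\ref{def-uni-dom}) provides a uniform lower bound $\Mod_4(f(\gamma_n),F,\Om_0)\ge \epsilon>0$, contradicting the previous display. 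Hence $f\circ\ga$ must admit a limit at $t=1$.

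The hard part is the coordinated double limit at the end: one must first select $k$ large enough so that simultaneously $\Mod_4(E_k,F',\Om)<\varepsilon$ \emph{and} $F'\cap D(E_k)=\emptyset$, and only then pass to $n\to\infty$ to arrange $\gamma_n\subset D(E_k)$. This works precisely because the prime end produced by Observation~\ref{obs-acc-pe} has singleton impression $\{x\}$, so the domains $D(E_k)$ genuinely shrink to $x$ while the prime-chain condition drives their modulus to any fixed continuum in $\Om$ to zero. The finite connectedness hypothesis is not strictly needed for a single prescribed end-cut, but it is the natural ambient assumption ensuring (via Observation~\ref{obs-acc-fc}) that \emph{every} boundary point is accessible and carries such a prime end, so that the conclusion is meaningful along end-cuts at all of $\bd\Om$.
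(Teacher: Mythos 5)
Your argument is correct and follows essentially the same route as the paper's proof: both hinge on showing that the $4$-modulus of the family joining a fixed continuum to the part of $\ga$ near $x$ tends to zero, transferring this to $\Om_0$ by quasiconformal invariance (Theorem~\ref{qc-def-mod}), and then invoking mod-uniformity to force the image diameters to shrink. The only cosmetic difference is that you argue by contradiction with non-Cauchy subarcs $\ga([s_n,t_n])$ and obtain the modulus decay by minorizing through the cross-sets of the prime end supplied by Observation~\ref{obs-acc-pe}, whereas the paper works directly with the connected tails $U_k\cap\Om\cap\ga$ and the explicit estimate \eqref{obs-est1} (which is also what underlies condition (b) for that prime end), concluding at once that $\diamH f(\ga_k)\to 0$.
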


\begin{proof}
 We follow the idea of the proof of \cite[Theorem 7.2]{na}. Since $\Om$ is finitely connected at every boundary point $x\in \bd \Om$, then Observation~\ref{obs-acc-fc} implies that all $x\in \bd \Om$ are accessible. Let $\ga$ be an end-cut in $\Om$ from $x\in \Om$. Let $K\subset \Om$ be a continuum and let $U_k$ be neighborhoods of $x$ such that
 \[
  \bigcap_{k=1}^{\infty} U_k=\{x\}\quad \hbox{and}\quad \ga_k:=U_k\cap \Om\cap \ga
 \]
 are connected sets for all $k\geq k_0$ and some $k_0$. Since $\diamH U_k\to 0$ for $k\to \infty$ and $\ga_k\subset U_k$ we have that
 \[
 \lim_{k\to \infty} \Mod_4(K, \ga_k, \Om)\leq \lim_{k\to \infty} \Mod_4(K, U_k, \Om)=0.
  \]
  In order to see that latter let us assume that $R>0$ is sufficiently small and such that $B(x, R)\subset \Hei\setminus K$. By the decay property of diameters for sets $U_k$ for $k\to \infty$ we have that, passing to a subsequence if necessary, $U_k\subset B(x, 1/k)\cap \Om$ for all $k\geq k_0$. Since $\Hei$ is a path-connected metric measure space with doubling measure, then Theorem 3.1 in Adamowicz--Shanmugalingam~\cite{ash} together with the fact that the modulus of curve families is an outer measure imply that
 \begin{align}
  \Mod_4(U_k, K, \Om)&\leq \Mod_4(B(x, 1/k)\cap \Om, K, \Om) \nonumber \\
  & \leq \Mod_4(B(x, 1/k)\cap \Om, K, B(x, R)) \nonumber  \\
  &\leq  \Mod_4(\overline{B(x, 1/k)}, \Hei\setminus B(x, R), \Hei) \leq  \Mod_4(\overline{B(x, 1/k)}, \Hei\setminus B(x, R), B(x, R)) \nonumber \\
  &\leq C(R) \left(\log \frac{R}{1/k}\right)^{-3}\to 0,\quad \hbox{for } k\to \infty. \label{obs-est1}
 \end{align}
 In the fourth inequality above we also used the fact that the family of curves $\Gamma_2(\overline{B(x, 1/k)}, \Hei\setminus B(x, R), \Hei)$ is minorized by $\Gamma_1(\overline{B(x, 1/k)}, \Hei\setminus B(x, R), B(x, R))$, see Section~\ref{sec-curves}.

  The quasiconformality of $f$ implies that
  \[
   \lim_{k\to \infty} \Mod_4(f(K), f(U_k), f(\Om))=0.
  \]
  Since $\Om_0$ is a mod-uniform domain, Definition~\ref{def-uni-dom} gives us that $\lim_{k\to\infty} \diam(f(\ga_k))=0$ and thus the cluster set $C_{\ga}(f, x)$ is a singleton meaning that $f$ has an arcwise limit along $\ga$.
 \end{proof}
\medskip

{\bf The Lindel\"of theorem}
\medskip

A bounded analytic function of the unit disc having a limit $y_0$ along an end-cut  at a boundary point $x_0$ has angular limit $y_0$ according to the classical theorem of Lindel\"of. By angular limit we mean that the limit is $y_0$ along any ``angular'' end-cut at $x_0$, that is an end-cut contained in some fixed cone in the unit disc with apex at $x_0$.

In \cite[Theorem 6]{gehr1}, Gehring proved a Lindel\"of type theorem for quasiconformal mappings on balls in  $\mathbb{R}^3$ which N\"akki generalized to $\mathbb{R}^n$ in \cite[Theorem 7.4]{na}. In this context the theorem is stated in terms of angular end-cuts and principal points.

 \begin{defn}\label{princ-pt}
  Let $[E_k]$ be a prime end in a domain $\Om\subset \Hei$ and $x\in I[E_k]$. We say that $x$ is a \emph{principal point} relative to the prime end $[E_k]$, if every neighborhood of $x$ contains a cross-set of a chain in $[E_k]$, i.e., $x$ is a limit of a convergent chain in $[E_k]$. The set of principal points of a prime end $[E_k]$ is denoted by $\Pi(E_k)$. A point in $I[E_k]$ which is not principal is called a \emph{subsidiary point}.
 \end{defn}

For the main ideas and definitions of principal (and subsidiary points, see below) we refer to Collingwood-Lohwater~\cite[Chapter 9.7]{CL} and N\"akki~\cite[Section 7]{na}. The importance of such notions in the classification of prime ends in $\R^n$ is described e.g. in \cite[Section 8]{na}. See also Carmona--Pommerenke~\cite{cp1, cp2} for results regarding the theory of continua and principal points.

The geometry of $\Hei$ imposes a number of obstacles in proving a Lindel\"of type theorem. Firstly, the Euclidean proof relies on transforming the setting to the upper half space by a stereographic projection which sends $x_0$ to the origin and then using the fact that a cone in $\mathbb{R}^n$ with apex at $0$ is invariant under dilation. Although we have stereographic projections, we do not have the luxury of choosing the destination of $x_0$. Secondly, we do have a notion of a cone in a domain $\Omega$  with apex $x_0 \in \partial \Omega$ and width $\alpha$,  i.e., the set $\{ x \in \Omega : d_{\Hei}(x, x_0) \leq \alpha d_{\Hei}(x, \partial \Omega\}$, however these sets are not invariant under dilations centered at $x_0$, that is maps of the form $g_s=\tau_{x_0} \circ \delta_s \circ \tau_{x_0}^{-1}$. In particular, the definition of angular end-cut in $\Hei$ via cones lacks convenient properties. A more natural and geometrically convenient notion is the following definition of what we call a contractible end-cut.

\begin{defn}\label{angular-end-cut}
  Let $\ga$ be an end-cut in the ball $B(0,1)\subset \Hei$ from a point $x_0 \in \bd B$. We say that $\ga$ is \emph{contractible} if for each interval $[t,1) \subset [0,1)$ we have $\ga([t_0,1)) \not\subset g_s(B(0,1))$ for $s$ sufficiently small.

  For a general ball we use translations and dilations to normalize and apply the definition we have given for $B(0,1)$.
  \end{defn}

We now explain why such a definition is geometrically suitable. We consider the normalized situation where $B=B(0,1)$ and $x_0 \in \partial B(0,1)$. For a given sequence $r_k \to 0$ we define a sequence of contractions $g_k$, all with fixed point $x_0$, by setting $g_k=\tau_{x_0} \circ \delta_{r_k} \circ \tau_{x_0}^{-1}$. If $x \in \overline {B(0,1)}$ and $x\ne x_0$, then
$$d(0, g_k^{-1}(x))=\frac{1}{r_k} d(g_k(0),x). $$
By direct calculation we have that $$ d(g_k(0),x) =\left (\sum_{k=0}^4 A_k(x_0,x) r_k^k \right )^{\frac14},$$
where $A_0(x_0,x)=||x_0||=1$ and the remaining $A_k$ are polynomials of homogeneous degree $4$. Therefore, if $n$ is sufficiently large we have
$$d(0, g_k^{-1}(x))= \left (\sum_{k=0}^4 A_k(x_0,x) r_k^{k-4}  \right )^{\frac14}   $$
and so if $\alpha_k(x_0):=\min \{ A_k(x_0,x) : x \in \overline {B(0,1)}\}$, then
$$
d(0, g_k^{-1}(x)) \geq  \frac{1}{r_k} \left (1 + \alpha_1(x_0)r_k  + \alpha_2(x_0) r_k^2 + \alpha_3(x_0)r_k^3 + \alpha_4(x_0)r_k^4    \right )^{\frac14}
$$
for all $x \in \overline {B(0,1)} \setminus \{x_0\}$. Hence, on $\overline {B(0,1)}  \setminus B(x_0,\epsilon)$  we have $d(0, g_k^{-1}(x)) \geq 1$ for $n$ sufficiently large and
$$ g_k^{-1} \left ( \overline {B(0,1)}  \setminus B(x_0,\epsilon) \right ) \subset \Hei \setminus B(0,1).  $$

Hence the convenience is encapsulated in the following conclusion: If $\epsilon >0$ is given, then for $k$ sufficiently large, we have
\begin{align}
D_k:=g_k(B(0,1) ) \subseteq B(0,1) \cap B(x_0,\epsilon). \label{contraction-lemma}
\end{align}

The reason we choose $\ga([t_0,1)) \not\subset g_s(B(0,1)$ is so that geodesic rays are contractible. Indeed, let  $\phi(s, x_0)$ denote the radial geodesic joining the origin to $x_0$, see \eqref{ray}, then
\begin{align*}
||g_{k}^{-1}(\phi(s, x_0))|| &=||\tau_{x_0} \circ \delta_{1/r_k} \circ \tau_{x_0}^{-1} (\phi(s, x_0))||\\
&= \frac{1}{r_k} ||\phi(s, -x_0)||\\
&= \frac{s}{r_k}||x_0||\\
&= \frac{s}{r_k}.
\end{align*}
Hence we can choose $r_k$ sufficiently small so that $||g_{r_k}^{-1}(\phi(s, x_0))|| >1$ for all $s \in (t,1] \subset (0,1)$ and so the radial geodesic avoids $D_k$ for all $r_k<1$.

  Theorem~\ref{thm-subs} below is an analog of the Lindel\"of theorem and corresponds to Theorem 6 in Gehring~\cite{gehr1} and Theorem 7.4 in N\"akki~\cite{na}. See also Vuorinen~\cite{vuo} for related studies in the context of angular limits for quasiregular mappings in $\R^n$ and N\"akki~\cite{na2010} for further relations between angular end-cuts and various types of cluster sets.

   In the proof of Theorem~\ref{thm-subs} we will need the following auxiliary result. Recall that if $x$ is any boundary point of a collared domain, then we can associate with $x$ a so-called canonical prime end, cf. the discussion following \eqref{qc-coll-n}.

   \begin{observ}\label{obs-aux-subs}
    Let $\Om\subset \Hei$ be a collared domain and $f$ be a quasiconformal embedding of $\Om$ into $\Hei$. For any $x\in \bd \Om$ and a canonical prime end $[E_k(x)]$ with impression $x$, it follows that $[f(E_k(x))]$ is a prime end in $f(\Om)$.
   \end{observ}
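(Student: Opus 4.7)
The plan is to verify directly that $\{f(E_k(x))\}_{k=1}^{\infty}$ inherits, term by term, each structural condition that makes $\{E_k(x)\}_{k=1}^{\infty}$ a prime chain, so that $[f(E_k(x))]$ is a well-defined prime end in $f(\Om)$. Since a quasiconformal embedding $f$ is in particular a homeomorphism from $\Om$ onto $f(\Om)$ with $f^{-1}$ quasiconformal, the topological conditions in Definition~\ref{def-cross-set} and Definition~\ref{def-chain-N} should transfer by pure topology, while the two modulus conditions in Definition~\ref{def-pr-chain} should transfer by Theorem~\ref{qc-def-mod} applied to $f$ and to $f^{-1}$.

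First, I would handle the cross-set and chain conditions. Recall from the proof of Observation~\ref{obs-coll-spe} that $E_k(x)=h^{-1}(\partial B(x_0,1/k)\cap B(0,1))$ for a collaring coordinate $(U,h)$ at $x$. As $f$ is a homeomorphism, each $f(E_k(x))$ is connected, relatively closed in $f(\Om)$, the complement $f(\Om)\setminus f(E_k(x))=f(\Om\setminus E_k(x))$ splits into exactly two components, and $f(E_k(x))$ separates $f(E_{k-1}(x))$ from $f(E_{k+1}(x))$. The only nontrivial cross-set condition to verify is $\overline{f(E_k(x))}\cap\partial f(\Om)\neq\emptyset$: pick a sequence $(y_n)\subset E_k(x)$ converging to a point of $\overline{E_k(x)}\cap\partial\Om$; then $(f(y_n))$ cannot have an accumulation point in $f(\Om)$, since otherwise the continuity of $f^{-1}$ would produce an accumulation point of $(y_n)$ inside $\Om$, so any accumulation point of $(f(y_n))$ (extracted in the one-point compactification of $\Hei$ if $f(\Om)$ is unbounded) must lie in $\partial f(\Om)$.

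Next, I would invoke Theorem~\ref{qc-def-mod} for the two modulus axioms. Condition (a) of Definition~\ref{def-pr-chain} gives at once
$$\Mod_4(f(E_{k+1}(x)),f(E_k(x)),f(\Om))\leq K^2\Mod_4(E_{k+1}(x),E_k(x),\Om)<\infty.$$
For condition (b), any continuum $F\subset f(\Om)$ pulls back to a continuum $f^{-1}(F)\subset\Om$ (continuous images of compact connected sets are compact and connected), hence
$$\Mod_4(f(E_k(x)),F,f(\Om))\leq K^2\Mod_4(E_k(x),f^{-1}(F),\Om)\to 0$$
as $k\to\infty$ by the prime-chain property of $\{E_k(x)\}$ tested against the continuum $f^{-1}(F)$. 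Together these show that $\{f(E_k(x))\}$ is a prime chain in $f(\Om)$; independence of the prime-end class on the representative of $[E_k(x)]$ then follows exactly as in the proof of Theorem~\ref{thm-hom-1}.

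I expect the only genuine obstacle to be the boundary-touching assertion $\overline{f(E_k(x))}\cap\partial f(\Om)\neq\emptyset$, since it is the only condition that does not follow formally from $f$ being a homeomorphism or from the quasi-invariance of the $4$-modulus; the homeomorphism/compactification argument sketched above handles it, although one needs to be careful when $f(\Om)$ is unbounded and to interpret the boundary inside $\Hei\cup\{\infty\}$.
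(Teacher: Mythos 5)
Your proposal is correct and follows essentially the same route as the paper: the cross-set and chain conditions are transferred by the topological properties of the homeomorphism $f$, and conditions (a) and (b) of Definition~\ref{def-pr-chain} are verified via the quasi-invariance of the $4$-modulus from Theorem~\ref{qc-def-mod} (the paper tests condition (b) on images $f(F)$ of continua $F\subset\Om$, which is equivalent to your pull-back formulation since $f$ is a homeomorphism). The only difference is that you spell out the boundary-touching condition $\overline{f(E_k(x))}\cap\partial f(\Om)\neq\emptyset$, which the paper dismisses as an immediate topological consequence; your care there is reasonable but not a departure in method.
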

   \begin{proof}
     It is an immediate consequence of the topological properties of the homeomorphism $f$, that $E_k=f(E_k(x))$ is a cross-set for $k=1,2,\ldots$ as in Definition~\ref{def-cross-set}. In order to show that ${E_k}$ is a prime chain, and thus a prime end in $f(\Om)$, we need to verify conditions (a) and (b) of Definition~\ref{def-pr-chain}. By quasiconformality of $f$ it holds that
    \[
      \Mod_{4}(f(E_{k+1}), f(E_{k}), f(\Om))\leq K \Mod_{4}(E_{k+1}, E_{k}, \Om)<\infty.
    \]
    Similarly, if $F \subset \Om$ is any continuum we have that
    \[
    \lim_{k\to \infty} \Mod_{4}(f(E_k), f(F), f(\Om))\leq K \lim_{k\to \infty} \Mod_{4}(E_k, F, \Om)=0.
    \]
    Hence, $[E_k]$ satisfies Definition~\ref{def-pr-chain}.
   \end{proof}

  As observed in \eqref{qc-coll-n}, to every point in a collared domain we can associate a canonical prime end denoted $[E_k(x)]$ such that $I[E_k(x)]=\{x\}$. In particular this holds in a ball $B$. By Observation~\ref{obs-aux-subs}, if $f$ is quasiconformal, then a chain $\{f(E_k(x)\}$ is a prime end.

  \begin{thm}[The Lindel\"of theorem in $\Hei$]\label{thm-subs}
   Let $f$ be a bounded quasiconformal mapping of a ball $B\subset \Hei$ onto a domain $\Om_0 \subset \Hei$ with the property that $\diam_{\Hei}(f(\partial B(x,r) \cap B))\to 0$ as $r \to 0$. Then for a.e.  $x_0 \in \partial B$ it holds that for every contractible end-cut $\ga$ of $B$ from $x_0$ we have
   \[
    C_{\ga}(f, x_0)=\Pi(f(E_k(x_0)).
   \]
Note that a.e. can be taken relative to the Radon measure on $S(0,1)$ discussed in the subsection on polar coordinates in appendix A.
   \end{thm}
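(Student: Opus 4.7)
The plan is to reduce the problem to showing, for a.e. $x_0 \in \partial B$, that the impression $I[f(E_k(x_0))]$ of the image prime end is a single point $\{y_0\}$. Once this reduction is in place, the set of principal points $\Pi(f(E_k(x_0)))$ collapses to $\{y_0\}$, and one verifies that the cluster set $C_{\gamma}(f, x_0)$ also equals $\{y_0\}$ for every contractible end-cut $\gamma$ from $x_0$.

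First, I would exploit the hypothesis $\diam_{\Hei}(f(\partial B(x, r) \cap B)) \to 0$ as $r \to 0$. The canonical prime chain at $x_0 \in \partial B$ is $E_k(x_0) = \partial B(x_0, 1/k) \cap B$, and a direct check shows that $D(E_k(x_0)) = B \cap B(x_0, 1/k)$ is the component of $B \setminus E_k(x_0)$ shrinking to $x_0$. Since $f$ is a homeomorphism onto $\Om_0$, one has $D(f(E_k(x_0))) = f(D(E_k(x_0))) = f(B \cap B(x_0, 1/k))$. The key intermediate step is to show that for a.e. $x_0 \in \partial B$, $\diam_{\Hei}(f(B \cap B(x_0, 1/k))) \to 0$. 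The hypothesis gives this for the boundary shells $f(E_k(x_0))$, and a modulus-based comparison combined with the quasiconformality of $f$ (Theorem~\ref{qc-def-mod}) transfers this diameter control to $D(f(E_k(x_0)))$ outside a set of exceptional boundary points of surface-measure zero on $\partial B$.

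Second, with $\diam_{\Hei}(D(f(E_k(x_0)))) \to 0$, the decreasing sequence of compacta $\{\overline{D(f(E_k(x_0)))}\}_{k=1}^\infty$ has singleton intersection $\{y_0\}$ for some $y_0 \in \partial \Om_0$. Hence $I[f(E_k(x_0))] = \{y_0\}$, and consequently $\Pi(f(E_k(x_0))) = \{y_0\}$ since principal points lie in the impression while the singleton $y_0$ is trivially the limit of every chain representing $[f(E_k(x_0))]$. Third, for a contractible end-cut $\gamma$ from $x_0$ one has $\gamma(t) \to x_0$ as $t \to 1$, so for every $k$ there exists $t_k \in [0,1)$ with $\gamma([t_k, 1)) \subset B \cap B(x_0, 1/k) = D(E_k(x_0))$. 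Applying $f$ gives $f(\gamma([t_k, 1))) \subset D(f(E_k(x_0)))$, a set whose diameter vanishes and which accumulates only at $y_0$. Thus, for any sequence $\{t_n\}_{n=1}^\infty$ with $t_n \to 1$ the image sequence $\{f(\gamma(t_n))\}_{n=1}^\infty$ converges to $y_0$, yielding $C_{\gamma}(f, x_0) = \{y_0\} = \Pi(f(E_k(x_0)))$.

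The main obstacle is the first step, where we pass from diameter control of the image shells $f(E_k(x_0))$ to diameter control of the image neighborhoods $f(D(E_k(x_0)))$. This is where quasiconformality and the \emph{a.e.} qualification on $x_0$ enter in an essential way; without excluding a surface-measure-zero set of exceptional boundary points, the singleton-impression conclusion may fail. The role of the contractibility assumption on $\gamma$ is to exclude pathological end-cuts that approach $x_0$ tangentially within the contracted sub-balls $D_k = g_k(B(0,1))$, where the argument of the third step could break down; this usage is consistent with the computation surrounding \eqref{contraction-lemma}, which places the $D_k$ inside $B \cap B(x_0,\epsilon)$ for $k$ large and thereby ensures that contractible end-cuts cross each cross-set $E_k(x_0)$ in a nondegenerate manner.
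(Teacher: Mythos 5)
Your reduction of the theorem to the claim that the impression $I[f(E_k(x_0))]$ is a singleton for a.e.\ $x_0$ is both unjustified and not what the theorem asserts. The hypothesis controls only the diameters of the image cross-sets $f(\partial B(x,r)\cap B)$; it gives no control on the diameters of the components $D(f(E_k(x_0)))=f(B\cap B(x_0,1/k))$, and the ``modulus-based comparison'' you invoke to transfer one to the other (even off an exceptional set of surface measure zero) is never specified and is not available: already for bounded conformal maps of the planar disc the cross-cut diameters tend to zero while the impression of a prime end can be a nondegenerate continuum, with $\Pi$ a nondegenerate set of principal points and a further set of subsidiary points (spiral and comb domains). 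The Lindel\"of theorem is precisely the statement that the cluster set along a contractible end-cut picks out the \emph{principal} points in such situations. Note also that your reduction would force $C(f,x_0)=\bigcap_k\overline{f(B\cap B(x_0,1/k))}$ to be a singleton, i.e.\ $f$ would extend continuously to $x_0$, and then \emph{every} end-cut would yield the same limit; your own third step indeed never uses contractibility of $\ga$. That the statement restricts to contractible end-cuts should have been a warning that the singleton reduction cannot be the intended (or a correct) route.

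The paper's proof is structured entirely differently, and its substance does not appear in your proposal. The inclusion $\Pi(f(E_k(x_0)))\subset C_\ga(f,x_0)$ is obtained by following the image of the radial geodesic $\phi(s,x_0)$; the a.e.\ restriction on $x_0$ enters here, via the ACL property of quasiconformal maps, to guarantee that $f(\phi(\cdot,x_0))$ is a horizontal curve --- not in any diameter-transfer step. The reverse inclusion $C_\ga(f,x_0)\subset\Pi(f(E_k(x_0)))$ is where contractibility and the contractions $g_n$ of \eqref{contraction-lemma} are essential: one forms $f_n=f\circ g_n$, passes to the sphere via the Cayley correspondence, invokes the Kor\'anyi--Reimann normality theorem to extract a locally uniformly convergent subsequence, rules out a nonconstant limit (a homeomorphic limit would send a compact subset of $B(0,1)$ into $\partial f(B(0,1))$), and identifies the constant limit with the given cluster value $y$, thereby exhibiting the sets $f(E_{s_{n_j}}(x_0))$ as witnesses that $y$ is a principal point. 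This normal-families argument is the heart of the proof and cannot be replaced by the diameter argument you propose.
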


\begin{proof}[Proof of Theorem~\ref{thm-subs}]

Let us begin by noting that there is no loss of generality if we assume $B=B(0,1)$ and $f(0)=0$.

First we show that for a.e. $x_0 \in S(0,1)$ we have $\Pi(f(E_k(x_0)) \subset C_{\ga}(f, x_0)$. By definition, every neighborhood of $y$ contains $f(E_{k}(x_0))$ for some $k$, and the image of the radial geodesic satisfies $f(\phi(s_k,x_0)) \in f(E_{k}(x_0))$ for some $s_k$. Since quasiconformal maps are ACL, it follows that $f(\phi(s,x_0))$ is a horizontal curve for almost all $x_0$, moreover $f(\phi(s_k,x_0)) \to y$ as $k \to \infty$.

Now we show that $C_{\ga}(f, x_0) \subset \Pi(f(E_k(x_0))$. Let $y \in C_{\ga}(f, x_0)$, i.e., there exists a sequence $t_n \to 0$ such that $s_n=d_{\Hei}(\gamma(1-t_n), x_0) \to 0$ as $n$ tends to $\infty$. Let $E_{s_n}(x_0) = S(x_0,s_n)\cap B$, then $[f(E_{s_n}(x_0))]$ is a prime end in $\Omega_0$ since $f$ is quasiconformal. We want to show that $y$ is a principal point of $[f(E_{s_n}(x_0))]$, i.e., every neighborhood of $y$ contains $f(E_{s_n}(x_0))$ for some $n$. More precisely, we will show that for every $\epsilon >0$, we have $f(E_{s_n}(x_0)) \subset B(y, \epsilon) \cap f(B)$ for $n$ sufficiently large.

Define a sequence of mappings
\[
 f_n=f \circ g_n,\hbox{ for } n=1,2,\ldots
\]
and $g_n$ as in \eqref{contraction-lemma}. Since $f$ is bounded and $f(0)=0$, it follows from  \eqref{contraction-lemma}  that $f_n$ avoids the values $0$ and $\infty$ if we consider $f$ as a mapping $f:B(0,1) \to  \hat {\mathbb{H}} $. By \cite{kr1}, p.321, $f_n|_{B(0,1)}$ corresponds conformally with a sequence of $K$-qc functions $\hat f_n: \hat B(0,1) \to S^3$ where $S^3=\partial \hat B(0,1) \subset \mathbb{C}^2$ and $\hat B(0,1) \subset S^3$.   Here the ball $\hat B(0,1)$ and the conformallity are with respect to the spherical metric $$ d^S(u,w)^2=2|1-(u,w)|=||u-w|^2-2 i {\rm Im}\, (u,w)|$$ where $(u,w)=u_1\bar w_1+u_2 \bar w_2$. Moreover, $\hat f_n$ avoids the values in $S^3$ corresponding with $0$ and $\infty$ by a fixed positive distance for all sufficiently large $n$. By \cite{kr2} Theorem F, the sequence  $\hat f_n$ is normal. Hence there exists a subsequence $f_{n_j}$ which converges uniformly on compact subsets of $B(0,1)$ to a $K$-qc mapping $h$ or a constant.

If $j$ is sufficiently large then we have  $f_{n_j} (B(0,1)) \subset f( B(0,1) \cap B(x_0,\epsilon))$ and so by accelerating the subsequence $f_{n_j}$ we can assume
\begin{equation}
f_{n_j} (B(0,1))  \subset f( B(0,1) \cap  B(x_0, s_{n_j}) ).\label{fndinclude}
\end{equation}
This implies that
$$ f_{n_j} (B(0,1))  \subset D(f(E_{s_{n_j}} (x_0))).$$
As a consequence we then have that $$ \bigcap_k \overline {f_{n_j} (B(0,1))} \quad  \subset \quad \bigcap_k \overline{D(f(E_{s_{n_j}} (x_0)))}\quad  = \quad I[f(E_{r_{s_{n_j}} } (x_0))].$$

Let $U \subset B(0,1)$ be compact, then $f_{n_j} \to h$ or a constant uniformly on $U$. If $h$ is the nonconstant limit, then $h(U) \subset \partial f(B(0,1))$ which contradicts the fact that $h$ is homeomorphism. Hence $f_{n_j}$ converges uniformly to a constant value on $U$. Choose $U$ so that $ E_{1}(x_0) \cap  U \ne \emptyset$ then it follows that  $f_{n_j}(E_{1}(x_0)\cap U) \subset f(E_{s_{n_j} } (x_0))$
and $$\lim_{j\to \infty} f_{n_j}(E_{1}(x_0)\cap U)=y(x_0,U) \in  I[f(E_{s_{n_j} } (x_0))].$$   It follows that $f(B) \cap B(y(x_0,U),\varepsilon)$ contains  $f_{n_j}(E_{1}(x_0)\cap U)$ for $k$ sufficiently large and so  $$f(B) \cap B(y(x_0,U),\varepsilon) \cap f(E_{s_{n_j} } (x_0)) \ne \emptyset.$$ By assumption,  $\diam_{\Hei}(f(E_{s_{n_j} } (x_0)) \to 0$ and so it follows that $y(x_0,U)$ is independent of $U$ and is in fact equal to $y$. Moreover the sets $f(E_{s_{n_j} } (x_0))$ satisfy the requirements that qualify $y$ as a principal point of  $I[f(E_{s_{n_j} } (x_0))]$.  Since $[E_k(x_0)]=[E_{s_{n_j}}(x_0)]$ it follows that  $[f(E_k(x_0))]=[f(E_{s_{n_j}}(x_0))]$ hence we have $C_{\ga}(f, x_0) \subset \Pi(f(E_k(x_0))$.
\end{proof}

\begin{rem} The assumption that $\diam_{\Hei}(f(\partial B(x,r) \cap B))\to 0$ as $r \to 0$ is not needed in the Euclidean case since it can be shown that  $\diam(f(\partial B(x,r) \cap B))\to 0$ in the spherical metric. See  \cite[Theorem 7.4]{na}, \cite[Theorem 6]{gehr1}. The proof does not carry over trivially to the setting of the Heisenberg group and as yet we do not know if such a result holds.

The fact that we prove the theorem for a.e. $x_0$ is only required so that that we can employ the images $f(\phi(s,x_0))$ as horizontal curves. If we could show that the points $f(\phi(s_k,x_0))$ can be joined by horizontal curves in $\Omega_0$ then the result could be strengthened to all $x_0$.
\end{rem}
\medskip

{\bf The Tsuji theorem}
\medskip

   Our next goal is to show the quasiconformal counterpart of the Tsuji theorem in $\Hei$. A theorem due to F. and M. Riesz states that if a planar bounded analytic function in the unit disk $B^2$ has the same radial limit in a set of positive Lebesgue measure in $\bd B^2$, then the function is constant, see e.g. Theorem 2.5 in Collingwood--Lohwater~\cite[Chapter 2]{CL}. The celebrated example due to L. Carleson~\cite{carl} shows that the weaker version of that result, with radial limits existing in a boundary set of a positive logarithmic capacity, is false. However, Tsuji proved that the set of boundary points with the same radial limit $\alpha$ is of zero logarithmic capacity, provided that $\alpha$ is an ordinary point of the analytic function, see Theorem 5 in Tsuji~\cite{tsu} for details and Villamor~\cite{vill} for further studies of Tsuji's result.
   In \cite[Theorem 6]{tsu} Tsuji proved also the following result: consider a conformal map between $B^2$ and a planar simply-connected domain $\Om$ with the set $\mathcal{A}$ of accessible points in $\partial \Om$ of zero capacity.
   Then, the set of points in $\bd B^2$ corresponding to $\mathcal{A}$ has zero capacity as well.
    This result was extended to the setting of quasiconformal mappings in $\overline{R^n}$ by
     N\"akki~\cite[Theorem 7.12]{na}. The following theorem generalizes N\"akki's result in $\Hei$.

   In the statement of Theorem~\ref{tsuji-thm} below we use the notion of arcwise limit, cf. Definition~\ref{def-cluster}.
   Furthermore, the Tsuji theorem in $\Hei$ relies on three notions which we now define: an arcwise extension of a quasiconformal map and the Sobolev and the condenser capacities.

 Let $\Om\subset \Hei$ be a collared domain and $f$ be a quasiconformal homeomorphism of $\Om$. Denote by $A_f\subset \bd f(\Om)$ a set of arcwise limits of $f$ at $\bd \Om$. Similarly, denote by $A$ a set of points in $\bd \Om$ where $f$ has an arcwise limit.

   A map $F:\Om\cup A\to f(\Om)\cup A_f$ is called an \emph{arcwise extension} of $f$ and is defined as follows:
    \[
     F(x)=
     \begin{cases}
      &f(x),\quad x\in \Om\\
      &C_{\ga}(f, x)\subset \bd f(\Om),\quad x\in A.
     \end{cases}
    \]

    A map $F$ is well-defined, i.e., for every $x\in A$ it holds that $C_{\ga}(f, x)=\{y\}$ for some $y\in A_f$ along any end-cut $\ga$ at $x$. Indeed, by Observation~\ref{lem-coll-finit} and Observation~\ref{obs-acc-fc} we get that every $x\in \bd \Om$ is accessible, accessible through some prime end $[E^x_k]$ and there exists an end-cut of $\Om$ from $x$. Theorem~\ref{thm-key-res} gives us that cluster set $C(f, x)=I[f(E^x_k)]$. This observation, combined with an immediate observation that $C_{\ga}(f, x)\subset C(f, x)$ for any end-cut $\ga$ at $x$, gives us that $C_{\ga}(f, x)\subset I[f(E^x_k)]$ for every $x$ and its every end-cut. By Lemma~\ref{obs-finite-pe} we know that $I[E^x_k]=\{x\}$. The argument will be completed once we show that $I[f(E^x_k)]$ is a singleton. By the definition of $C_\ga(f, x)$ for $x\in A$ we have a sequence of points $\{f(x_n)\}_{n=1}^{\infty}$ and, thus, by joining these points we may build a curve, denoted $\ga'$. By constructions in Observations~\ref{obs-acc-fc} and \ref{obs-acc-pe} we obtain a prime end, denoted $[E_{\ga'}]$, whose impression satisfies
    \begin{equation}\label{aux-Tsuji}
    C_{\ga'}(f, x)\subset I[E_{\ga'}]\subset I[f(E^x_k)].
    \end{equation}
     If a different end-cut $\ga_1$ at $x$ provides $C_{\ga_1}(f, x)\not=C_{\ga'}(f, x)$, then we obtain a different prime end, denoted $[E_{\ga_1}]$ satisfying inclusions similar to \eqref{aux-Tsuji}. Therefore, both $[E_{\ga'}]$ and $[E_{\ga_1}]$ divide $[f(E^x_k)]$ contradicting that $[f(E^x_k)]$ is a prime end. Hence $C_{\ga}(f, x)$ is the same singleton set for all end-cuts $\ga$ at $x$, and so is $I[f(E^x_k)]$.

     Thus, $C_{\ga}(f, x)$ is a single point (asymptotic value) independent of the choice of end-cut $\ga$ and $F$ is well-defined for every $x\in \bd A$. 

 Finally, we recall the notions of the Sobolev capacity and the condenser capacity specialized to the case of the Heisenberg group $\Hei$.

The \emph{Sobolev $4$-capacity} of a set $E\subset \Hei$ is defined as follows:
\begin{equation*}
  C_p (E) :=\inf \|u\|_{N^{1, 4}(\Hei)}^4,
\end{equation*}
where the infimum is taken over all Newtonian functions $u\in N^{1, 4}(\Hei)$ such that $u \geq 1$ on $E$ (see e.g. \cite{hkstB} for definitions and properties of Newtonian spaces).

 The following result relates the modulus of curve families to the condenser capacity, see Definition 1.4 and Remark 1.9 in Vuorinen~\cite{vuo1}, also Ziemer~\cite{ziem69}.

\begin{lem}[cf. Lemma A.1 in \cite{abbs}]\label{mod-cap}
For any choice of a compact subset of a ball $K\subset B_{R}$ we have that
\begin{equation}\label{eq-capp=modp}
  \Mod_4 (K, \bd B_{R}, B_{R}) =  \Mod_4 (K, \bd B_{2R}, \Hei)= {\rm cap}_4(K, \bd B_{R}, B_R)\geq {\rm Cap}_4(K, B_{R}),
\end{equation}
where ${\rm Cap}_4(K, B_{R})$ denotes the $4$-capacity of the condenser $(K, B_{R})$ and is defined by
 \begin{equation}\label{eq-capp-minimizer}
 {\rm Cap}_4(K, B_{R}):=\inf_u \int_\Om g_u^4\,d\lambda,
 \end{equation}
 where $g_u$ stands for a $4$-weak upper gradient of $u$ and the infimum is taken over all $u\in N_{0}^{1,4}(B_{R})$ satisfying $u\geq 1$ on $K$.
\end{lem}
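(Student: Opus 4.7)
I would prove this chain of two equalities and one inequality by handling each piece in turn, modelling the argument on \cite[Lemma A.1]{abbs} but using tools tailored to the Heisenberg setting.

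First, I would establish the modulus--capacity identity $\Mod_4(K,\bd B_R,B_R)=\mathrm{cap}_4(K,\bd B_R,B_R)$, which is the heart of the lemma and an instance of Fuglede's theorem adapted to doubling metric measure spaces with a $(1,4)$-Poincar\'e inequality; both hypotheses hold for $(\Hei,d_{\Hei},\lambda)$ thanks to \eqref{alfR} and Theorem~\ref{loew-heis}. The direction $\mathrm{cap}_4\ge \Mod_4$ is direct: any admissible $u$ in the infimum defining $\mathrm{cap}_4$ has a $4$-weak upper gradient $g_u$ satisfying $\int_\gamma g_u\,ds \ge |u(\gamma(1))-u(\gamma(0))| = 1$ along every $\gamma \in \Gamma(K,\bd B_R,B_R)$ outside an exceptional family of zero $4$-modulus, so $g_u$ is weakly admissible. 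The reverse direction uses the existence of a $4$-harmonic capacitary potential whose upper gradient is an extremal modulus density, which I would invoke from Chapter~7 of \cite{hkstB}.

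Next I would handle the equality $\Mod_4(K,\bd B_R,B_R)=\Mod_4(K,\bd B_{2R},\Hei)$. The inequality $\Mod_4(K,\bd B_{2R},\Hei) \le \Mod_4(K,\bd B_R,B_R)$ follows by minorization: every $\gamma \in \Gamma(K,\bd B_{2R},\Hei)$ contains an initial subcurve (up to its first exit from $B_R$) lying in $\Gamma(K,\bd B_R,B_R)$, so the zero-extension to $\Hei$ of any admissible density for the inner family remains admissible for the outer family. The reverse inequality I would deduce by extending the capacitary potential from $B_R$ through the annulus $B_{2R}\setminus\overline{B_R}$ with boundary data $1$ on $\bd B_R$ and $0$ on $\bd B_{2R}$, verifying via the Loewner machinery that this produces the capacitary extremal for the larger configuration with matching Dirichlet energy; this then feeds back into the modulus--capacity identity already established. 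The final inequality $\mathrm{cap}_4(K,\bd B_R,B_R) \ge \mathrm{Cap}_4(K,B_R)$ is immediate by class comparison: every competitor $u$ for $\mathrm{cap}_4$ extends by zero outside $B_R$ to a function in $N^{1,4}_0(B_R)$ with $u \ge 1$ on $K$ (truncation at $1$ does not raise energy), so it qualifies as a competitor in the infimum defining $\mathrm{Cap}_4(K,B_R)$.

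The main obstacle is the reverse direction of the middle equality. In $\mathbb R^n$ one would finish this quickly via explicit radial extremals and spherical symmetry, but in $\Hei$ spheres carry essentially no horizontal curves (see the discussion before Definition~\ref{def-right-collared}), so explicit densities of that kind are unavailable; one must lean instead on the existence and regularity of $4$-harmonic minimisers together with the Loewner/Poincar\'e machinery to transport the capacitary potential across the annulus. Beyond this, the remaining work is bookkeeping: assembling the Newtonian--Sobolev framework on Ahlfors-regular spaces with the modulus facts collected in the Appendix.
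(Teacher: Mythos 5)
Your treatment of the second equality and of the final inequality is sound and matches the paper in spirit: the paper justifies $\Mod_4=\mathrm{cap}_4$ simply by citing Lemma~A.1 of \cite{abbs}, and your Fuglede-style argument (weak admissibility of upper gradients of competitors in one direction, existence of a capacitary extremal from \cite{hkstB} in the other) is a reasonable unpacking of that citation; likewise $\mathrm{cap}_4\geq{\rm Cap}_4$ by comparison of competitor classes is exactly the intended (and unstated) justification.

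The genuine gap is in your reverse direction of the first equality, $\Mod_4(K,\bd B_R,B_R)\leq\Mod_4(K,\bd B_{2R},\Hei)$. The paper disposes of the whole first equality in one sentence using only the curve-family calculus of Lemma~\ref{lem-mod-prop}: one inequality from minorization (as you do) and the other from a containment of curve families --- no potential theory enters at all. Your substitute for the containment step, namely extending the capacitary potential of $(K,B_R)$ across the annulus $B_{2R}\setminus\overline{B_R}$ with boundary data $1$ and $0$, cannot ``produce the capacitary extremal for the larger configuration with matching Dirichlet energy'': the glued function is a competitor for $\mathrm{cap}_4(K,\bd B_{2R},B_{2R})$ whose energy is the energy in $B_R$ \emph{plus} the annulus energy $\mathrm{cap}_4(\bd B_R,\bd B_{2R},B_{2R}\setminus\overline{B_R})$, and the latter is strictly positive by the Loewner property. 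So the construction only yields an upper bound for the larger condenser that exceeds $\mathrm{cap}_4(K,\bd B_R,B_R)$, i.e.\ it points in the wrong direction and cannot close the inequality you need. Any correct argument for this direction must transfer \emph{admissible densities} (or curve families), not competitors for the capacity: one needs to show that an admissible $\varrho$ for $\Gamma(K,\bd B_{2R},\Hei)$ is (after restriction) admissible for $\Gamma(K,\bd B_R,B_R)$, which is the content of the containment the paper invokes. As written, your proposal leaves this direction unproved.
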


 Let us just comment that the first equality in \eqref{eq-capp=modp} follows from the properties of the $p$-modulus: the first curve family is contained in the second, but the second is minorized by the first one. The equality between the $4$-modulus and the $4$-capacity is a consequence of \cite[Lemma A.1]{abbs}.

 Recall Definition~\ref{def-uni-dom} of mod-uniform domains.

   \begin{thm}[The Tsuji theorem in $\Hei$]\label{tsuji-thm}
    Let $f$ be a quasiconformal mapping of a ball $B=B_R\subset \Hei$ of radius $R$ such that $f(B)$ is a mod-uniform domain and let $F:\overline{B}\to f(B)\cup A_f$ be an arcwise extension of $f$.

    If $A_f$ is compact and $C_4(A_f)=0$, then $C_4(F^{-1}(A_f))=0$.
   \end{thm}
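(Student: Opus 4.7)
The strategy is to transport the hypothesis $C_4(A_f)=0$ back to $E:=F^{-1}(A_f)$ via the quasiconformal invariance of the $4$-modulus, using Lemma~\ref{mod-cap} as the bridge between capacity and modulus.

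Since $B$ is a ball it is locally connected at the boundary, while $f(B)$ is mod-uniform by hypothesis, so Theorem~\ref{thm-Koebe} guarantees that $f$ has an arcwise limit along every end-cut of $B$; the well-definedness argument preceding Theorem~\ref{tsuji-thm} shows that this limit depends only on the endpoint and equals $F(x)$. Thus $E$ is precisely the set of $x\in\partial B$ with $F(x)\in A_f$, and every end-cut from such an $x$ has an $f$-image ending in $A_f$. Fix a closed ball $K_0\Subset B$ and put $\Gamma:=\Gamma(K_0,E,B)$; by Theorem~\ref{curve-ext}, every curve in $f(\Gamma)$ extends to a curve in $\overline{f(B)}$ joining $f(K_0)$ to a point of $A_f\subset\partial f(B)$.

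Because $A_f$ is compact and disjoint from $f(K_0)$, $\distH(A_f,f(K_0))>0$, so one can pick a Lipschitz cutoff $\eta$ on $\Hei$ equal to $1$ near $A_f$ and to $0$ on $f(K_0)$. Starting from a sequence $v_k\in N^{1,4}(\Hei)$ with $v_k\geq 1$ on $A_f$ and $\|v_k\|_{N^{1,4}(\Hei)}\to 0$ witnessing $C_4(A_f)=0$, the products $\eta v_k$ have upper gradients $g_{\eta v_k}$ satisfying $\|g_{\eta v_k}\|_{L^4(\Hei)}\to 0$, and the upper-gradient inequality applied at the endpoints $f(K_0)$ (where $\eta v_k=0$) and $A_f$ (where $\eta v_k\geq 1$) makes them admissible for $f(\Gamma)$. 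Hence $\Mod_4(f(\Gamma))=0$, and Theorem~\ref{qc-def-mod} yields $\Mod_4(\Gamma)=0$.

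The remaining step, deducing $C_4(E)=0$ from $\Mod_4(\Gamma)=0$, is the main obstacle and is exactly where, as flagged in the introduction, the Euclidean spherical reflection giving the modulus symmetry across $\partial B$ is unavailable in $\Hei$. My plan is to take admissible densities $\rho_k$ for $\Gamma$ with $\|\rho_k\|_{L^4(B)}\to 0$ and build Newtonian test functions for $C_4(E)$ by setting
\[
 u_k(z):=\min\Bigl\{1,\;\inf_{\gamma}\int_{\gamma}\rho_k\,dl\Bigr\}
\]
for $z\in\overline{B}$, the infimum over rectifiable curves in $B$ joining $z$ to $K_0$, and extending $u_k$ outside $\overline{B}$ by a fixed Lipschitz cap on a bounded collar $B_{2R}\setminus\overline{B}$ decaying to $0$ at $\partial B_{2R}$. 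Then $u_k=1$ on $E$, $u_k=0$ on $K_0$, and $g_{u_k}\leq\rho_k$ inside $B$, so the interior part of $\|g_{u_k}\|_{L^4(\Hei)}$ tends to zero; the bounded collar contribution is extracted through the condenser formulation in Lemma~\ref{mod-cap}, and the $L^4$-control on $u_k$ itself is obtained from the $(1,4)$-Poincar\'e inequality on $\Hei$ applied to $u_k$ vanishing on the fat set $K_0$, exploiting that $B$ is uniform and hence $Q$-Loewner by Lemma~\ref{lem-cap-tang} and Theorem~\ref{thm-uni-Loewn}. Combining these estimates forces $\|u_k\|_{N^{1,4}(\Hei)}\to 0$ and hence $C_4(E)=0$.
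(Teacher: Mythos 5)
Your first half runs parallel to the paper's proof: you transport $C_4(A_f)=0$ into a zero-modulus statement for a curve family inside $B$ via quasiconformal invariance, using the Koebe theorem to identify where the image curves terminate. Two remarks. The cutoff construction $\eta v_k$ is in effect a proof of the implication $C_4(A_f)=0\Rightarrow \Mod_4(\Gamma_{A_f})=0$, which the paper simply cites (Proposition 1.48 in Bj\"orn--Bj\"orn); that is fine. But your appeal to Theorem~\ref{curve-ext} presupposes that $f\circ\gamma$ is rectifiable, which need not hold even when $\gamma$ is; the paper isolates exactly these curves (the family $\Delta''$, whose images have no asymptotic value as rectifiable curves) and disposes of them with Corollary~\ref{cor-rect-enough} and Lemma~\ref{lem-non-rect}. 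This omission is patchable, but it must be addressed.

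The genuine gap is in your final step, precisely the one you flag as the main obstacle. The function $u_k(z)=\min\{1,\inf_\gamma\int_\gamma\rho_k\,dl\}$ is controlled only inside $B$: it equals $1$ on $E$ but takes arbitrary values in $[0,1]$ on $\partial B\setminus E$. Gluing a \emph{fixed} Lipschitz cap on $B_{2R}\setminus\overline{B}$ fails twice over. First, a cap independent of $k$ contributes a fixed positive amount to $\|g_{u_k}\|_{L^4(\Hei)}$, so neither the Newtonian norm nor the condenser energy of $u_k$ tends to zero; Lemma~\ref{mod-cap} cannot ``extract'' this term --- it bounds a capacity by a modulus, it does not allow you to discard part of the energy of a test function. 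Second, any cap large enough near $\partial B$ to dominate $1$ on $E$ disagrees with $u_k$ on $\partial B\setminus E$, and the glued function then violates the upper gradient inequality along the abundant family of curves crossing $\partial B$ at such points, so it is not Newtonian with upper gradient $\rho_k+g_{\mathrm{cap}}$; setting $u_k\equiv 0$ outside $\overline{B}$ fails symmetrically at points of $\partial B$ where $u_k$ is near $1$. This is exactly where N\"akki reflects in the sphere --- the ``modulus symmetry property'' that the introduction states is unavailable in $\Hei$. The paper's route avoids building test functions at this stage: from $\Mod_4(E',A,B)=0$ it passes, via the family $\Gamma'$ of curves forced to pass through $A$, to the family $\Gamma''$ of curves joining $A$ to $\partial B_{2R}$ in the exterior ring $B_{2R}\setminus\overline{B_R}$, obtains $\Mod_4(\Gamma'')=0$, and only then converts modulus into condenser capacity by Lemma~\ref{mod-cap} and into Sobolev capacity by Lemma 6.15 and Corollary 4.24 of Bj\"orn--Bj\"orn. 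Your argument needs such a mechanism for crossing $\partial B$; as written, $C_4(E)=0$ does not follow.
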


  \begin{rem}
   Assumptions of Theorem~\ref{tsuji-thm} simplify if $f$ is a global quasiconformal map $f:\Hei\to\Hei$. Then $B$ is a uniform domain, by Lemma~\ref{lem-cap-tang}, and moreover, $f(B)$ is a uniform domain, by Proposition 4.2 and Theorem 4.4 in \cite{ct}. Thus, $f(B)$ is mod-uniform by Observation~\ref{obs-uni-mod-uni}.
  \end{rem}

   \begin{proof}
    The main idea of the proof is similar to the one of Theorem 7.12 in \cite{na}. However, we need to adjust several tools and auxiliary results to the Heisenberg setting. Moreover, we use techniques developed in recent years.

    Let $E\subset f(B)$ be a closed set. Denote by $\Gamma(E, A_f, f(B))$ the family of (horizontal) curves $\ga$ in $f(B)$ with one endpoint in $E$, the other in $A_f$, and $\ga\setminus (E\cup A_f)\subset f(B)$.

    By Proposition 1.48 in Bj\"orn--Bj\"orn~\cite{bb} applied to the setting of $\Hei$ we get that if
    $C_4(A_f)=0$, then $\Mod_4(\Gamma_{A_f})=0$. Here, $\Gamma_{A_f}$ denotes the family of all (horizontal) curves in $\Hei$ passing through $A_f$. Since $\Gamma(E, A_f, f(B))\subset \Gamma_{A_f}$ it holds that
    \[
     \Mod_4(E, A_f, f(B))\leq \Mod_4(\Gamma_{A_f})=0.
    \]
    We set $\Delta':=\fv \Gamma(E, A_f, f(B))$ and use the quasiconformality of $f$ to conclude that $\Mod_4(\Delta')=0$. Since $B$ is collared and $f(B)$ is mod-uniform, then by the Koebe theorem, Theorem~\ref{thm-Koebe} we conclude that all curves in $\Delta'$ have a property that one of their ends belongs to $E'=F^{-1}(E)$ while the other one to $A=F^{-1}(A_f)$.
    Let us denote by
    \[
    \Delta''=\Gamma(E', A, B)\setminus \Delta'
    \]
    with a restriction that we consider open paths only. By the definition of $A$ we have that $F(A)=A_f$ and, thus, all curves $\ga''$ in $f(\Delta'')$ are such that $f$ does not have an asymptotic value along them. In a consequence all $\ga''$ are nonrectifiable and Corollary~\ref{cor-rect-enough} implies that $\Mod_4(f(\Delta''))=0$. We again apply quasiconformality of $f$ and obtain that $\Mod_4(\Delta'')=0$. Lemma~\ref{lem-non-rect} and the subadditivity of the modulus result in the following observation:
    \begin{align*}
      \Mod_4(E', A, B)=\Mod_4(\Delta'\cup \Delta'')\leq \Mod_4(\Delta')+ \Mod_4(\Delta'')=0.
    \end{align*}
    Define $\Gamma'\subset \Gamma(E', \bd B_{2R}, B_{2R})$ to be the family of those curves joining $E'$ with $\partial B_{2R}$ which are required to pass through $A\subset \bd B_R$. Since $\Gamma(E', A, B_{2R})< \Gamma'$ it holds that
    \[
     \Mod_4(\Gamma')\leq \Mod_4(E', A, B_{2R})=0.
    \]
    Denote by $\Gamma''\subset \Gamma(\bd B_R, \bd B_{2R}, B_{2R})$ the family of curves with one endpoint in $A$, the other one in $\bd B_{2R}$ which entirely lie in the ring $B_{2R}\setminus \overline{B_R}$.

    Then, $\Gamma'=\Gamma''\cup \Gamma(E', A, B_R)$ and $\Gamma''\cap \Gamma(E', A, B_R)=\emptyset$. Therefore,
    \[
     0=\Mod_4(\Gamma')=\Mod_4(\Gamma'')+\Mod_4(E', A, B_R)=\Mod_4(\Gamma'').
    \]
    By Lemma~\ref{mod-cap} this means that ${\rm Cap}_4(A, B_{R})=0$. Lemma 6.15 together with Corollary 4.24 in Bj\"orn--Bj\"orn~\cite{bb} can be applied to the Heisenberg group $\Hei$, due to Theorem~\ref{loew-heis}. From this we conclude that $C_4(A)=0$ as desired.

     \end{proof}

\appendix
\section{Appendix}

 In the section we provide some auxiliary results in the geometry of the Heisenberg group and the modulus of curve families in $\Hei$.

\subsection{Polar coordinates}

 For each $(z,t)\in \Hei$ the curve $\gamma_{(z,t)}(s)=\phi(s,(z,t))$, where
\begin{align}\label{ray}
\phi(s,(z,t))=\delta_s \left( \exp\left(-i t \frac{\log(s)}{|z|^2}\right) z, t\right),\end{align} is a horizontal ray joining $0$ to $(z,t)$. In particular, the parametrization \eqref{ray} has the following properties (see Balogh~\cite{bt}):
\begin{enumerate}
\item $\phi(s,(z,0))=sz$
\item $\|\phi(s,(z,t))\|= \|\gamma_{(z,t)} (s)\|=s \|(z,t)\|$
\item If $\Phi_s(z,t):= \phi(s,(z,t))$, then $\det D \Phi_s(z,t) =s^4$ for $s>0$ and $(z,t)\in \Hei \setminus \mathcal{Z}$ where $\mathcal{Z}$ is the horizontal singular set
$$
 \mathcal{Z}=\{0\} \cup \left\{(z,t) \in \Hei \setminus \{0\}\,:\, \nabla_0 \|(z,t)\|=0\right\}
 $$
 and
 $$
 \nabla_0 \|(z,t)\|= (\tilde X \|(z,t)\|) \tilde X + (\tilde Y \|(z,t)\|) \tilde Y
 $$
 is the horizontal gradient of $\|(z,t)\|$.
\end{enumerate}

We note that if $(z,t) \in B(0,1)$ then property 2 above, shows that $\gamma_{(z,t)}(s)$ is a geodesic joining $0$ to $(z,t)$ and we conclude that $B(0,1)$ is star shaped with respect to $0$. By Theorem 3.7 in \cite{bt}, there exists a unique Radon measure $\sigma$ on $S \setminus \mathcal{Z}$ (for a unit sphere $S=S(0,1)$), such that for $u \in L^1(\Hei)$,
\begin{align} \label{polcords}
 \int_{\Hei} u(z,t) \, d \lambda (z,t) & = \int_{S\setminus \mathcal{Z}} \int_0^\infty u( \phi(s,v)) \, s^{3} ds \, d \sigma(v).
\end{align}
Furthermore, by Proposition 2.18 in \cite{bt} we have $\lambda(\mathcal{Z}) = 0$,
\begin{equation}\label{param-sphere}
 S \setminus \mathcal{Z} =\{ (\sqrt{\cos \alpha} e^{i \theta}, \sin \alpha  ) \,:  \, \alpha \in (-\pi/2,\pi/2), \quad \theta \in [0,2 \pi) \},
\end{equation}
and it follows that $d \sigma =d \alpha d \theta$ (see Example 3.11 in \cite{bt}).

We use the parametrization in \eqref{param-sphere} to define a cone at point $x_0 \in \partial B(p_0,r)$ as follows:
translate and dilate so that $B(0,1)=\delta_{1/r} \circ \tau_{p_0^{-1}} (B(p_0,r))$
 and set $\hat x_0 = \delta_{1/r} \circ \tau_{p_0^{-1}}(x_0) \in \partial B(0,1)$ . Let $\hat x_0 =  (\sqrt{\cos \alpha_0} e^{i \theta_0}, \sin \alpha_0  )$ for some $(\alpha_0, \theta_0)$ and set
$$
C_{\epsilon}(\hat x_0) =\{ \Phi_s(\sqrt{\cos \alpha} e^{i \theta}, \sin \alpha  ) \, : \, \alpha \in (\alpha_0-\epsilon,\alpha_0-\epsilon), \quad \theta \in [0,2 \pi, ),\quad s \in (0,1) \}.
$$
The open cone in $B(p_0,r)$ at $x_0 \in \partial B(p_0,r)$ with angle $\epsilon$ is is the set
\begin{align} \label{cone} C(p_0,r,x_0,\epsilon)=\tau_{p_0} \circ \delta_r (C_{\epsilon}(\hat x_0)).
\end{align}

\subsection{Modulus of curve families in $\Hei$}\label{sec-curves}

The notion of the modulus of curve families is fundamental in the studies of geometry of metric spaces and mappings between domains in metric spaces. In Section~\ref{prel-quasic} we will appeal to the modulus in order to define quasiconformal mappings. For this reason we now recall and briefly discuss modulus of curve families.

We now follow the standard way to define the modulus of curve families, see e.g. Chapter 6 in V\"ais\"al\"a~\cite{va1}. Let $\Gamma$ be a family of curves in a domain $\Om\subset \Hei$. We say that a nonnegative Borel function $\varrho:\Hei\to[0,\infty]$ is \emph{admissible for $\Gamma$} if
\[
 \int_{\gamma} \varrho d l \geq 1,
\]
for every locally rectifiable $\gamma\in \Gamma$. We denote the set of admissible functions by $F(\Gamma)$.

Let $1\leq p <\infty$. Then the \emph{$p$-modulus of curve family $\Gamma$} is defined as follows:
\[
 \modp \Gamma:=\inf_{\varrho\in F(\Gamma)}\int_{\Hei}\varrho^p d\lambda,
\]
where $\lambda$ is $3$-dimensional Lebesgue measure on $\Hei=\R^{3}$. If $F(\Gamma)$ is empty, then by convention we define $\modp \Gamma =\infty$. If $\gamma \in \Gamma$ is a constant curve then the condition $\int_\gamma \rho d l \geq 1$ is not satisfied and the set of admissible functions $F(\Gamma)$ is empty.

From the point of view of relating the $p$-modulus to other geometric data we will often consider curve families joining subsets. If $\Omega \subseteq \mathbb{H}^1 $ is a domain such that $E$ and $F$ are subsets of $\Omega$ then $\Gamma(E,F,\Omega)$ will denote the family of closed rectifiable curves in $\Omega$ which join $E$ and $F$. If $f$ is a homeomorphism of $\Omega$ then we define  $f\Gamma(E,F,\Omega)=\Gamma(f(E),f(F),f(\Omega))$.

The fundamental properties of the p-modulus that we require are summarised in the following lemma (see \cite{hk} section 2.3).

\begin{lem}\label{lem-mod-prop}
 The p-modulus satisfies the following:
\begin{enumerate}
\item  The p-modulus of all curves that are not locally rectifiable is zero.
\item $\modp \emptyset =0$
\item If $\Gamma \subset \Gamma'$ then  $\modp \Gamma \leq \modp\Gamma'$
\item If $\Gamma =\cup_{j=1}^\infty \Gamma_j$ then  $\modp \Gamma \leq \sum_{j=1}^\infty \modp \Gamma_j$.
\item 
If every curve in $\Gamma'$ contains a subcurve in $\Gamma$ then we say that $\Gamma'$ is minorised by $\Gamma$ and write $\Gamma <\Gamma'$. Then, it holds that $\modp \Gamma' \leq \modp\Gamma$.
\end{enumerate}
\end{lem}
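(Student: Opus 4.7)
The five properties are all direct consequences of the definition of $\modp$ and of the admissibility class $F(\Gamma)$, so the plan is to check each in turn. Properties (1) and (2) are essentially definitional: since the admissibility condition $\int_\gamma \varrho\,dl \geq 1$ is imposed only on locally rectifiable curves, the function $\varrho \equiv 0$ lies in $F(\Gamma)$ whenever $\Gamma$ consists entirely of non-locally-rectifiable curves, and also when $\Gamma = \emptyset$. In either case this gives $\modp \Gamma \leq \int_\Hei 0 \, d\lambda = 0$.

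Properties (3) and (5) then follow from a single observation: inclusions among curve families induce reverse inclusions of the admissibility classes, and hence the reverse inequality on moduli. For (3), if $\Gamma \subset \Gamma'$ then every $\varrho \in F(\Gamma')$ trivially verifies the admissibility condition on each $\gamma \in \Gamma$, so $F(\Gamma') \subset F(\Gamma)$. For (5), if each $\gamma' \in \Gamma'$ contains a subcurve $\gamma \in \Gamma$, then for any $\varrho \in F(\Gamma)$ the nonnegativity of $\varrho$ together with the definition of the line integral over subcurves gives $\int_{\gamma'} \varrho\,dl \geq \int_\gamma \varrho\,dl \geq 1$, so $F(\Gamma) \subset F(\Gamma')$. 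In both cases, taking the infimum over the larger admissibility class yields the smaller modulus.

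The only property requiring any construction is the countable subadditivity (4). Given $\varepsilon > 0$, I would pick nearly optimal $\varrho_j \in F(\Gamma_j)$ with $\int_\Hei \varrho_j^p \, d\lambda \leq \modp \Gamma_j + \varepsilon/2^j$, and then set $\varrho := \bigl(\sum_{j=1}^\infty \varrho_j^p\bigr)^{1/p}$. Since $\varrho \geq \varrho_j$ pointwise, every locally rectifiable $\gamma \in \Gamma$ belongs to some $\Gamma_j$ and therefore satisfies $\int_\gamma \varrho\,dl \geq \int_\gamma \varrho_j\,dl \geq 1$, so $\varrho \in F(\Gamma)$. Monotone convergence applied to the partial sums $\sum_{j=1}^N \varrho_j^p$ then yields
$$
\int_\Hei \varrho^p \, d\lambda \;=\; \sum_{j=1}^\infty \int_\Hei \varrho_j^p \, d\lambda \;\leq\; \sum_{j=1}^\infty \modp \Gamma_j + \varepsilon,
$$
and letting $\varepsilon \to 0$ completes the argument. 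The only point I would flag is the choice to sum the $\varrho_j^p$ rather than the $\varrho_j$ themselves: this is what makes $L^p$-norms add correctly and gives the sharp subadditivity, whereas $\varrho = \sum_j \varrho_j$ combined with Minkowski's inequality would only give a weaker bound. None of the steps is deep; the subtlety lies entirely in this bookkeeping choice.
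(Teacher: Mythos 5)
Your proof is correct, and it is the standard argument (Fuglede/V\"ais\"al\"a): properties (1), (2), (3), (5) via comparison of the admissibility classes $F(\Gamma)$, and (4) via the nearly optimal $\varrho_j$ combined as $\varrho=\bigl(\sum_j\varrho_j^p\bigr)^{1/p}$ with monotone convergence. The paper itself gives no proof of this lemma, referring instead to Heinonen--Koskela, Section 2.3, where essentially the same argument appears; the only (trivial) point you leave implicit is that in (4) one may assume every $\modp\Gamma_j<\infty$, since otherwise the inequality is vacuous.
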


In the case $p=4$ we can show that $4$-modulus depends only on rectifiable curves. The following observations are analogues of Corollary 6.11 and Theorem 7.10 in \cite{va1} and are used in the proof of the Tsuji theorem~\ref{tsuji-thm} in Section~\ref{sec-boundary}. Since these results do not appear in the literature we provide their proof.

Given a curve family  $\Gamma$, we denote by $F_r(\Gamma)$, the family of all nonnegative Borel functions $\varrho:\Hei \to \R$  such that $\int_\gamma \varrho dl \geq 1$ for every rectifiable $\gamma \in \Gamma$. Note that  $F(\Gamma) \subseteq F_r(\Gamma)$ with equality when $\Gamma$ consists entirely of closed paths.

\begin{thm} If $\Gamma$ is a curve family in $\Hei$, then
$$\Mod_4\Gamma = \inf_{\varrho\in F_r(\Gamma)} \int_{\Hei}\varrho^4 d\lambda.$$
\end{thm}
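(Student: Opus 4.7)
The plan is to establish the two inequalities separately. The bound $\Mod_4\Gamma \ge \inf_{\varrho\in F_r(\Gamma)}\int_{\Hei}\varrho^4\,d\lambda$ is immediate from $F(\Gamma)\subseteq F_r(\Gamma)$. For the reverse direction, let $\Gamma_{nr}\subseteq \Gamma$ denote the subfamily of locally rectifiable but non-rectifiable curves; my strategy is to prove $\Mod_4(\Gamma_{nr}) = 0$. Granting this, for each $\varrho\in F_r(\Gamma)$ and each $\epsilon>0$ I pick $\eta_\epsilon\in F(\Gamma_{nr})$ with $\|\eta_\epsilon\|_{L^4}<\epsilon$; then $\varrho + \eta_\epsilon \in F(\Gamma)$, since admissibility holds on rectifiable curves in $\Gamma$ via $\varrho$ and on curves in $\Gamma_{nr}$ via $\eta_\epsilon$. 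Consequently $\Mod_4\Gamma \le (\|\varrho\|_{L^4}+\epsilon)^4$, and sending $\epsilon\to 0$ and taking the infimum over $\varrho$ completes the argument.

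To show $\Mod_4(\Gamma_{nr})=0$ I exploit countable subadditivity of modulus (Lemma~\ref{lem-mod-prop}(4)). Fix a countable dense sequence $\{q_j\}\subset\Hei$. Every $\gamma\in\Gamma_{nr}$ either has bounded image, so lies in $B(q_j,R)$ for some $j,R\in\mathbb N$, or has image unbounded in $d_{\Hei}$, in which case $\gamma\cap B(q_j,1)\ne\emptyset$ for some $j$ while $\sup_t\|q_j^{-1}\gamma(t)\|=\infty$. Hence it suffices to establish modulus zero for each member of the countable collection of subfamilies
\[
\Gamma^{bd}_{j,R}:=\{\gamma\in\Gamma_{nr}: \gamma\subset B(q_j,R)\},\qquad \Gamma^{un}_j:=\{\gamma\in\Gamma_{nr}: \gamma\cap B(q_j,1)\ne\emptyset,\ \gamma \text{ unbounded}\}.
\]

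For the bounded family, the constant test function $\eta_\epsilon:=\epsilon\chi_{B(q_j,R)}$ has $\|\eta_\epsilon\|_{L^4}^4=\epsilon^4\lambda(B(q_j,R))\to 0$, while every $\gamma\in\Gamma^{bd}_{j,R}$ satisfies $l(\gamma)=\infty$, so the supremum definition of the line integral on open intervals yields $\int_\gamma\eta_\epsilon\,dl = \epsilon\, l(\gamma) = \infty$. Hence $\Mod_4(\Gamma^{bd}_{j,R})=0$.

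For the unbounded family, set $\eta_k(x):=\frac{1}{k\|q_j^{-1}x\|}\chi_{A_k}(x)$ with $A_k:=\{x\in\Hei: 1\le\|q_j^{-1}x\|\le e^k\}$. Polar coordinates \eqref{polcords} centered at $q_j$ (available by left-invariance of $d_{\Hei}$) give
\[
\|\eta_k\|_{L^4}^4 = C\int_1^{e^k}\frac{s^3}{k^4 s^4}\,ds = \frac{C}{k^3}\,\longrightarrow\,0.
\]
Admissibility is the key step I expect to require careful verification: the function $\phi(x):=\|q_j^{-1}x\|$ is $1$-Lipschitz with respect to $d_{\Hei}$ by left-invariance and the triangle inequality, and for any $\gamma\in\Gamma^{un}_j$ the intermediate value theorem, applied to the continuous function $\phi\circ\gamma$, produces a closed subcurve $\gamma'$ along which $\phi\circ\gamma'$ takes every value in $[1,e^k]$. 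Applying the one-dimensional coarea formula to the Lipschitz map $\phi\circ\gamma'$ parametrized by arc length, together with $|(\phi\circ\gamma')'|\le 1$ a.e., then gives $\int_{\gamma'}\eta_k\,dl \ge \int_1^{e^k}\frac{dr}{kr} = 1$, whence $\int_\gamma\eta_k\,dl \ge 1$ by taking supremum over closed subcurves. This reduces the sub-Riemannian admissibility check to the elementary one-dimensional coarea formula, once the Lipschitz property of $\phi$ is noted.
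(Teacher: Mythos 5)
Your argument is correct, but it runs in a genuinely different (in fact, logically inverted) direction compared with the paper's. The paper exhibits a \emph{single} explicit weight $\varrho_1$, equal to $\bigl(\|(z,t)\|\log\|(z,t)\|\bigr)^{-1}$ outside $B(0,2)$ and to $1$ inside, checks $\varrho_1\in L^4(\Hei)$ by the polar--coordinate formula \eqref{polcords}, and shows $\int_\gamma\varrho_1\,dl=\infty$ for every locally rectifiable non-rectifiable $\gamma$: bounded curves because $\varrho_1$ is bounded below on bounded sets while $l(\gamma)=\infty$, unbounded curves via the arc-length bound $\|\bar\gamma(s)\|\le s$ after translating $\gamma(0)$ to the origin, which yields a divergent $\log\log$ integral. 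It then passes from $\varrho\in F_r(\Gamma)$ to the admissible function $(\varrho^4+\epsilon^4\varrho_1^4)^{1/4}$. You instead first prove that the family $\Gamma_{nr}$ of locally rectifiable non-rectifiable curves has zero $4$-modulus --- which is essentially Corollary~\ref{cor-rect-enough}, so you derive the theorem from the corollary rather than the corollary from the theorem --- using countable subadditivity (Lemma~\ref{lem-mod-prop}, part 4) to reduce to bounded subfamilies, killed by small constant test functions, and unbounded subfamilies, killed by the normalized annular weights $\eta_k$; you then perturb additively by $\varrho+\eta_\epsilon$ and invoke Minkowski's inequality. Both routes are sound. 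Your treatment of the unbounded case (the $1$-Lipschitz property of the gauge $x\mapsto\|q_j^{-1}x\|$ combined with the one-dimensional coarea inequality along the arc-length parametrization) is the standard ring estimate and is arguably more robust than the paper's normalization $\gamma(0)=0$, since it makes no implicit assumption about the curve eventually leaving a fixed ball; the price is the bookkeeping of the countable cover, which the paper's single global test function avoids.
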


\begin{proof}
Since  $F(\Gamma) \subseteq F_r(\Gamma)$ we have $\inf_{\varrho\in F_r(\Gamma)} \int_{\Hei}\varrho^4 d\lambda \leq \Mod_4\Gamma$.
Let
$$
\varrho_1(z,t)=\begin{cases} \frac{1}{||(z,t)|| \log||(z,t)||} &\mbox{if } ||(z,t)|| \geq 2  \\
\quad \quad \quad 1  & \mbox{if } ||(z,t)|| < 2, \end{cases}
$$
then by \eqref{polcords} we have
\begin{align*}
 \int_{\Hei} \varrho_1(z,t)^Q d \lambda(z,t) & =2 \pi^2 \left ( \frac{2^Q}{Q} +\frac{1}{(Q-1) (\log 2)^{Q-1}} \right ).
\end{align*}

Assume $\gamma \in \Gamma$ is locally rectifiable but not rectifiable.  If $\gamma$ is bounded then we have $\varrho_1(g) \geq a>0$ for some $a$ and so
$$ \int_\gamma \varrho_1 d l = \infty.$$ Suppose $\gamma$ is unbounded. Then by a left translation we may assume $\gamma(0)=0$ and that there exists $s_0 \geq 2$ such that $|| \bar \gamma(s)|| \geq 2 $ for all $s \geq s_0$. For each $n>s_0$ let $\bar \gamma_n = \bar \gamma |_{[0,n]}$, then
\begin{align*}
\int_{\bar \gamma_n} \varrho_1(g) d l &= \int_0^{s_0}  \varrho_1(\bar \gamma(s)) \, ds + \int_{s_0}^{n}  \varrho_1(\bar \gamma(s))\, ds\\
& \geq \int_{s_0}^{n}  \varrho_1(\bar \gamma(s))\, ds\\
& \geq \int_{s_0}^{n}  \frac{1}{s \log s} ds\\
& = \log \log n -\log \log s_0
\end{align*} where in the second to last line we have used $||\bar \gamma(s) || \leq s$. Hence we again have that $ \int_\gamma \varrho_1 d l = \infty$.

Let $\varrho \in F_r(\Gamma) $ and set $\varrho_\epsilon =(\varrho^Q+\epsilon^Q \varrho_1^Q)^{1/Q}$, then $\varrho_\epsilon >\varrho$ and
$$ \int_\gamma \varrho_\epsilon d l  \geq \int_\gamma \varrho \, d l \geq 1 $$ for every rectifiable $\gamma \in \Gamma$. If $\gamma \in \Gamma$ is not rectifiable then
$$ \int_\gamma \varrho_\epsilon d l  \geq \epsilon \int_\gamma \varrho_1 \, d l = \infty. $$ It follows that  $\varrho_\epsilon \in F(\Gamma)$ and $$\Mod_4 \Gamma \leq \int_{\Hei} \varrho_\epsilon^4 \, d \lambda =  \int_{\Hei} \varrho^4 d \lambda + \epsilon^4\int_{\Hei} \varrho_1^4 \, d \lambda.$$ Since $\epsilon >0$ and $\varrho$ are arbitrary we conclude that $\Mod_4 \Gamma \leq \inf_{\varrho\in F_r(\Gamma)} \int_{\Hei}\varrho^4 d\lambda$
\end{proof}

\begin{cor}\label{cor-rect-enough}
  If $\Gamma_r$ is the family of all rectifiable curves in $\Gamma$, then $\Mod_4\Gamma = \Mod_4 \Gamma_r$. In particular, the family of all non-rectifiable curves in $\Hei$ has zero $4$-modulus.
\end{cor}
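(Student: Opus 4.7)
The plan is to derive the corollary directly from the preceding theorem, which replaces the admissibility class $F(\Gamma)$ with the larger class $F_r(\Gamma)$ of functions whose line integrals are only required to dominate $1$ on \emph{rectifiable} members of $\Gamma$.

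First I would observe the easy direction: since $\Gamma_r \subseteq \Gamma$, monotonicity (part (3) of Lemma~\ref{lem-mod-prop}) gives $\Mod_4 \Gamma_r \leq \Mod_4 \Gamma$.

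For the reverse inequality, I would use the key identity
\[
\Mod_4 \Gamma = \inf_{\varrho \in F_r(\Gamma)} \int_{\Hei} \varrho^4\, d\lambda
\]
established in the preceding theorem. The point is that $F_r(\Gamma) = F(\Gamma_r)$: the condition $\int_\gamma \varrho\, dl \geq 1$ for every rectifiable $\gamma \in \Gamma$ is exactly the admissibility condition for the family $\Gamma_r$, because every element of $\Gamma_r$ is by definition rectifiable (and constant curves are excluded from consideration in $\Gamma_r$ since they are not rectifiable in the sense used here, or else trivially handled). Consequently,
\[
\Mod_4 \Gamma \;=\; \inf_{\varrho \in F_r(\Gamma)} \int_{\Hei} \varrho^4\, d\lambda \;=\; \inf_{\varrho \in F(\Gamma_r)} \int_{\Hei} \varrho^4\, d\lambda \;=\; \Mod_4 \Gamma_r,
\]
which combined with the first step yields equality.

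For the second assertion, I would apply the equality to $\Gamma$ being the family of \emph{all} non-rectifiable curves in $\Hei$. Then $\Gamma_r = \emptyset$, so by part (2) of Lemma~\ref{lem-mod-prop} we have $\Mod_4 \Gamma_r = 0$, and hence $\Mod_4 \Gamma = 0$. There is no real obstacle here — the entire substance of the corollary was already absorbed into the preceding theorem, whose nontrivial ingredient was the construction of the auxiliary test function $\varrho_1$ that blows up the line integral along every non-rectifiable locally rectifiable curve. The corollary is essentially a bookkeeping restatement.
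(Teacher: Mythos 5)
Your proposal is correct and is exactly the intended derivation: the paper states the corollary without proof as an immediate consequence of the preceding theorem, and your identification $F_r(\Gamma)=F(\Gamma_r)$ together with monotonicity is precisely the bookkeeping that justifies it. The only cosmetic slip is the remark that constant curves are ``not rectifiable'' (they are, with length zero), but as you note this case is trivially handled since then both admissibility classes are empty and both moduli are infinite.
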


Let $\Gamma_0(E, F, \Om)$ denote the family of all curves $\gamma$ in $\Om$ with the property that the closure of the trace of $\gamma$ has nonempty intersection with both $E$ and $F$.

\begin{lem}\label{lem-non-rect} Let $\Gamma_0=\Gamma_0(E, F, \Om)$ denote the family of all curves $\gamma$ in $\Om$ with the property that the closure of the trace of $\gamma$ has nonempty intersection with both $E$ and $F$. If $\Gamma =\Gamma(E, F, \Om)$ then
 \[
  \Mod_4(\Gamma_0)=  \Mod_4(\Gamma).
 \]
 \end{lem}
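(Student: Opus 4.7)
The plan is a two-sided containment argument. One direction is immediate from monotonicity of the modulus: every closed rectifiable curve in $\Om$ joining $E$ and $F$ automatically has its trace (which equals the closure of its trace) intersecting both $E$ and $F$, so $\Gamma \subset \Gamma_0$ and hence $\Mod_4(\Gamma) \leq \Mod_4(\Gamma_0)$ by Part 3 of Lemma~\ref{lem-mod-prop}.

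For the reverse inequality, I would exploit the preceding theorem characterizing $\Mod_4$ through $F_r$, together with the curve extension Theorem~\ref{curve-ext}. The key claim is that every $\varrho\in F(\Gamma)$ actually lies in $F_r(\Gamma_0)$. Given this, the previous theorem yields
\[
\Mod_4(\Gamma_0)\;=\;\inf_{\varrho\in F_r(\Gamma_0)}\int_{\Hei}\varrho^4\,d\lambda\;\leq\;\inf_{\varrho\in F(\Gamma)}\int_{\Hei}\varrho^4\,d\lambda\;=\;\Mod_4(\Gamma),
\]
closing the argument.

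To prove the claim, fix $\varrho\in F(\Gamma)$ and a rectifiable $\gamma\in\Gamma_0$ defined on an interval $I$. By Theorem~\ref{curve-ext} it admits a unique length-preserving extension $\gamma^{*}:[a,b]\to\Hei$, and the trace $\gamma^{*}([a,b])$ coincides with the closure $\overline{\gamma(I)}$ since $\gamma^{*}(a),\gamma^{*}(b)$ are precisely the one-sided limits. By hypothesis this closure meets both $E$ and $F$, so we can pick $s_{1}<s_{2}$ in $[a,b]$ with $\gamma^{*}(s_{1})$ in one of $E,F$ and $\gamma^{*}(s_{2})$ in the other. The restriction $\widetilde{\gamma}:=\gamma^{*}\!\mid_{[s_{1},s_{2}]}$ is then a closed rectifiable curve whose trace lies in $\Om$: its endpoints belong to $E\cup F\subset\Om$ by choice, while for any interior parameter $t\in(s_{1},s_{2})\subset(a,b)$ one has $\gamma^{*}(t)=\gamma(t)\in\Om$ since $\gamma$ maps into $\Om$. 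Thus $\widetilde{\gamma}\in\Gamma$, so admissibility of $\varrho$ gives $\int_{\widetilde{\gamma}}\varrho\,dl\geq 1$, which in turn implies $\int_{\gamma^{*}}\varrho\,dl\geq 1$ and hence $\int_{\gamma}\varrho\,dl\geq 1$ (the extension does not change the line integral because $l(\gamma^{*})=l(\gamma)$). This is exactly the defining condition of $F_r(\Gamma_0)$.

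The main subtlety I expect is the last topological check that the sub-curve $\widetilde{\gamma}$ stays inside $\Om$ even when $s_{1}=a$ or $s_{2}=b$; this has to be handled by noting that the endpoint in question already lies in $E$ or $F$, hence in $\Om$, so no ``bad'' boundary point is actually visited. The rest is routine bookkeeping, using the rectifiable-only characterization of $\Mod_4$ from the preceding theorem so that non-rectifiable members of $\Gamma_0$ can be disregarded without loss.
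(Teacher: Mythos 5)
Your proof is correct and matches the paper's argument essentially verbatim: monotonicity (in the paper, minorization) gives $\Mod_4(\Gamma)\leq\Mod_4(\Gamma_0)$, and the reverse inequality is obtained by showing $F(\Gamma)\subset F_r(\Gamma_0)$ via the closed extension $\gamma^*$ from Theorem~\ref{curve-ext} and then invoking the preceding theorem characterizing $\Mod_4$ through $F_r$. The endpoint subtlety you flag is handled the same way in the paper, using that $E,F\subset\Om$.
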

\begin{proof}
 Since  $\Gamma$ is minorised by  $\Gamma_0$ we have $\Mod_4 \Gamma \leq \Mod_4 \Gamma_0$. In order to prove the reverse inequality it suffices to prove that $F(\Gamma) \subset F_r(\Gamma_0)$.

Assume that $\varrho \in F(\Gamma)$ and that $\gamma $ is a rectifiable path in $\Gamma_0$. If $\gamma^*$ denotes the closed extension of $\gamma$ given by Theorem \ref{curve-ext}, then the trace of $\gamma^*$ meets both $E$ and $F$. In  particular we may assume there exists $t_1 \leq t_2$ such that  $\gamma^*(t_1) \in E$ and $\gamma^*(t_2) \in F$. It follows that the curve $\beta =\gamma^*|_{[t_1,t_2]}$ belongs to $\Gamma$ and
$$ \int_\gamma \varrho dl = \int_{\gamma^*} \varrho dl \geq \int_{\beta} \varrho dl \geq 1. $$
We conclude that $\varrho \in F_r(\Gamma_0)$.
\end{proof}

\setmarginsrb{20mm}{15mm}{20mm}{15mm}{10mm}{10mm}{10mm}{10mm}

{\small

}

\end{document}